\def\BibTeX{{\rm B\kern-.05em{\sc i\kern-.025em b}\kern-.08em
    T\kern-.1667em\lower.7ex\hbox{E}\kern-.125emX}}
\newtheorem{thm}{Theorem}[section]
\newtheorem{lem}[thm]{Lemma}
\newtheorem{prop}[thm]{Proposition}
\newtheorem{cond}[thm]{Condition}
\theoremstyle{definition}
\theoremstyle{remark}
\newtheorem{rem}{Remark}[section]
\newtheorem{exmp}{Example}[section]
\numberwithin{equation}{section}
    \newcommand{\floor}[1]{\lfloor#1\rfloor}
    \newcommand{\EE}{\mathbb{E}}
    \newcommand{\Exp}{\operatorname{E}}
    \newcommand{\E}{\Exp}
    \renewcommand{\Pr}{\operatorname{P}}
    \newcommand{\dto}{\xrightarrow{d}}
    \newcommand{\wto}{\xrightarrow{w}}
    \newcommand{\vto}{\xrightarrow{v}}
    \newcommand{\fidi}{\xrightarrow{\text{fidi}}}
    \newcommand{\eind}{\stackrel{d}{=}}
    \newcommand{\rmd}{\mathrm{d}}
\newcommand{\be}{\begin{equation}}
    \newcommand{\ee}{\end{equation}}
\begin{document}

\title[Maxima of linear processes] 
{Skorokhod $M_{1}$ convergence of maxima of multivariate linear processes with heavy-tailed innovations and random coefficients}

%
\author{Danijel Krizmani\'{c}}

\address{Danijel Krizmani\'{c}\\ Faculty of Mathematics\\
        University of Rijeka\\
        Radmile Matej\v{c}i\'{c} 2, 51000 Rijeka\\
        Croatia}
\email{dkrizmanic@math.uniri.hr}



\subjclass[2010]{Primary 60F17; Secondary 60G70}
\keywords{Functional limit theorem, Multivariate Linear process, Regular variation, Extremal process, $M_{1}$ topology}


\begin{abstract}
We derive functional convergence of the partial maxima stochastic processes of multivariate linear processes with weakly dependent heavy-tailed innovations and random coefficients. The convergence takes place in the space of $\mathbb{R}^{d}$--valued c\`{a}dl\`{a}g functions on
$[0,1]$ endowed with the weak Skorokhod $M_{1}$ topology. We also show that this topology in general can not be replaced by the standard (or strong) $M_{1}$ topology.
\end{abstract}

\maketitle

\section{Introduction}
\label{intro}

Let $(X_{i})_{i \in \mathbb{Z}}$ be a strictly stationary sequence of
random variables, and denote by $M_{n} = \max \{X_{1}, X_{2}, \ldots, X_{n}\}$, $n \geq 1$, its partial maxima. The asymptotic distributional behavior of $M_{n}$ is one of the main object of interest of classical extreme value theory. When $(X_{i})$ is an i.i.d.~sequence and there exist constants $a_{n}>0$ and $b_{n}$ such that
\begin{equation}\label{e:EVT}
  \Pr \Big( \frac{M_{n}-b_{n}}{a_{n}} \leq x \Big) \to G(x) \quad \textrm{as} \ n \to \infty,
 \end{equation}
 with non-degenerated limit $G$, then the limit belongs to the class of extreme value distributions (see Gnedenko~\cite{Gn43} and Resnick~\cite{Re87}). In particular, if  the distribution of $X_{1}$ is regularly varying at infinity with some positive index $\alpha$, that is
 $$ \lim_{x \rightarrow \infty} \frac{\Pr(X_{1} >ux)}{\Pr(X_{1}>x)} = u^{-\alpha}$$
 for every $u>0$, then relation (\ref{e:EVT}) holds with $G(x)=\exp \{-x^{-\alpha} \}$, $x>0$ (see Resnick~\cite{Re87}, Proposition 1.11).
 It is known that generalizations of this result to weak convergence of partial maxima processes in the space of c\`{a}dl\`{a}g functions hold. More precisely, relation (\ref{e:EVT}) implies
\begin{equation}\label{e:convPMP}
 a_{n}^{-1}M_{n}(\,\cdot\,)= \bigvee_{i=1}^{\floor {n\,\cdot}} \frac{X_{i}}{a_{n}} \dto Y(\,\cdot\,)
 \end{equation}
in the space $D([0,1], \mathbb{R})$ of real-valued c\`{a}dl\`{a}g functions on
$[0,1]$ endowed with the Skorokhod $J_{1}$ topology, with $Y$
being an extremal process generated by $G$ (see Lamperti~\cite{La64}, and Resnick~\cite{Re87}, Proposition 4.20). Simplifying notation, we sometimes omit brackets and write $a_{n}^{-1}M_{n} \dto Y$.

In the dependent case, Adler~\cite{Ad78} studied $J_{1}$ functional convergence with the weak dependence condition similar to "asymptotic independence" condition introduced by Leadbetter~\cite{Le74}. For stationary sequences of jointly regularly varying random variables Basrak and Tafro~\cite{BaTa16} derived the invariance principle for the partial maximum process in $D([0,1], \mathbb{R})$ with the Skorokhod $M_{1}$ topology. It is well known that weak convergence in relation (\ref{e:convPMP}) holds for a special class of weakly dependent random variables, the linear processes or moving averages processes with i.i.d.~heavy-tailed innovations and deterministic coefficients (see Resnick~\cite{Re87}, Proposition 4.28).

Recently, Krizmani\'{c}~\cite{Kr22-2} showed that the functional convergence in (\ref{e:convPMP}) holds also for linear processes with i.i.d.~innovations and random coefficients. In this paper we aim to generalize this result into two directions, the first one by studying linear processes with weakly dependent innovations (and random coefficients), and the second one by extending this theory to the multivariate setting.
Due to possible clustering of large values, the $J_{1}$ topology becomes inappropriate, and therefore we will use the weaker Skorokhod $M_{1}$ topology. This topology works well if all extremes within each cluster of large values have the same sign. A multivariate version of the convergence in (\ref{e:convPMP}) is well known to hold in the i.i.d.~case (see for example Resnick~\cite{Resnick07}, Proposition 7.2), but also for certain weakly dependent time series, including some $m$--dependent processes and GARCH processes with constant conditional correlations (see Krizmani\'{c}~\cite{Kr17}).

 Say here that the $M_{1}$ convergence in general fails to hold for partial sum processes. Clusters of large values in the sequence $(X_{n})$ may contain positive and negative values yielding the corresponding partial sum processes having jumps of opposite signs within temporal clusters of large values, and this precludes the $M_{1}$ convergence. Under certain conditions one can still obtain convergence in the weaker Skorokhod $M_{2}$ topology (see Krizmani\'{c}~\cite{Kr19}). For partial maxima processes we do not have similar problems with positive and negative values in clusters of large values since these processes are non-decreasing and hence only jumps with positive sign appear in them, which means one can have functional $M_{1}$ convergence.

 The paper is organized as follows. In Section~\ref{S:Pre} we introduce basic notions about linear processes, multivariate regular variation and Skorokhod topologies. In Section~\ref{S:FiniteMA} we derive weak $M_{1}$ convergence of the partial maxima stochastic process for finite order multivariate linear processes with weakly dependent heavy-tailed innovations and random coefficients. The main idea is to transfer the point process convergence obtained by Basrak and Tafro~\cite{BaTa16} to the functional convergence of partial maxima processes using the continuous mapping theorem and some $M_{1}$ continuity properties of an appropriately chosen maximum functional on the space of Radon point measures. In Section~\ref{S:InfiniteMA} we
extend this result to infinite order multivariate linear processes, and show by an example that the convergence in the weak $M_{1}$ topology can not be replaced by the standard $M_{1}$ convergence. Some technical results needed for this extension are given in Appendix.

\section{Preliminaries}\label{S:Pre}

In this section we introduce some basic notions and results on multivariate regular variation, linear processes, Skorokhod topologies and point processes that will be used in the following sections. We use superscripts in parentheses to designate vector components and coordinate functions, for example $a=(a^{(1)}, \ldots, a^{(d)}) \in \mathbb{R}^{d}$ and $x=(x^{(1)}, \ldots, x^{(d)}) \colon [0,1] \to \mathbb{R}^{d}$.
For two vectors $a=(a^{(1)}, \ldots, a^{(d)}), b=(b^{(1)}, \ldots, b^{(d)}) \in \mathbb{R}^{d}$, $a \leq b$ means $a^{(k)} \leq b^{(k)}$ for all $k=1,\ldots, d$. Denote by $(a,b)$ the vector $(a^{(1)}, \ldots, a^{(d)}, b^{(1)}, \ldots, b^{(d)})$. The vector $(a^{(1)}, b^{(1)}, a^{(2)}, b^{(2)}, \ldots, a^{(d)}, b^{(d)})$ will be denoted by $(a^{(i)},b^{(i)})^{*}_{i=1,\ldots,d}$, and the vector $(a^{(1)} \vee b^{(1)}, \ldots, a^{(d)} \vee b^{(d)})$ will be denoted by $a \vee b$, where for $c, d \in \mathbb{R}$ we put $c \vee d = \max \{c, d\}$. Sometimes for convenience we will denote the vector $a$ by $(a^{(i)})_{i=1,\ldots,d}$. For a real number $c$ we write $ca=(ca^{(1)}, \ldots, ca^{(d)})$.

\subsection{Regular variation}

Regular variation on $\mathbb{R}^{d} $ for random vectors is typically formulated in terms of vague convergence on $\EE^{d}= [-\infty, \infty]^{d} \setminus \{ 0\}$. The topology on $\EE^{d}$ is chosen so that a set $B \subseteq \EE^{d}$
has compact closure if and only if it is bounded away from zero,
that is, if there exists $u > 0$ such that $B \subseteq \EE^{d}_u = \{ x
\in \EE^{d} : \| x \| >u \}$. Here $\| \cdot \|$ denotes the max-norm on $\mathbb{R}^{d}$, i.e.
$\displaystyle \| x \|=\max \{ |x^{(i)}| : i=1, \ldots , d\}$ for
$x=(x^{(1)}, \ldots, x^{(d)}) \in \mathbb{R}^{d}$.
Denote by $C_{K}^{+}(\mathbb{E}^{d})$ the class of continuous functions $f \colon \mathbb{E}^{d} \to [0,\infty)$ with compact support.

The $\mathbb{R}^{d}$--valued random vector $\xi$ is (multivariate) regularly varying if there exist $\alpha >0$ and a random vector $\Theta$
on the unit sphere $\mathbb{S}^{d-1} = \{ x \in \mathbb{R}^{d} :
\| x \|=1 \}$ in $\mathbb{R}^{d}$, such that for every $u>0$,
 \begin{equation}\label{e:regvar1}
   \frac{\Pr(\|\xi\| > ux,\,\xi / \| \xi \| \in \cdot \, )}{\Pr(\| \xi \| >x)}
    \wto u^{-\alpha} \Pr( \Theta \in \cdot \,) \qquad \textrm{as} \ x \to \infty,
 \end{equation}
where the arrow ''$\wto$'' denotes the weak convergence of finite measures. This definition does not depend on the choice of the norm, since if $(\ref{e:regvar1})$ holds for some norm on $\mathbb{R}^{d}$, it holds for all norms (of course, with different distributions of $\Theta$). The number $\alpha$ is called the index of regular variation of $\xi$, and the probability measure $\Pr( \Theta \in \cdot \,)$ is called the spectral measure of $\xi$ with respect to the norm $\|\,\cdot\|$.
 Regular variation can be expressed in terms of vague convergence of measures on $\mathcal{B}(\EE^{d})$:
 \begin{equation}\label{e:regvarvague}
 n \Pr ( a_{n}^{-1} \xi \in \cdot\,) \vto \mu (\,\cdot\,),
 \end{equation}
where $(a_{n})$ is a sequence of positive real numbers tending to infinity and $\mu$ is a non-null Radon measure on $\mathcal{B}(\EE^{d})$ with $\mu(\EE^{d} \setminus \mathbb{R}^{d})=0$ and with the following scaling property:
$$ \mu(uB) = u^{-\alpha} \mu(B),  \qquad \forall\,u >0,\,B \in \mathcal{B}(\EE^{d})$$
(Lindskog~\cite{Li04}, Theorems 1.14 and 1.15). We can always choose a sequence $(a_{n})$ such that $n \Pr(\|\xi\| > a_{n}) \to 1$ as $n \to \infty$, for example we can take $a_{n}$ to be the $(1-n^{-1})$--quantile of the distribution function of $\|\xi\|$. This and relation (\ref{e:regvar1}) then imply
\begin{equation}\label{e:nizrv0}
\lim_{n \to \infty} n \Pr(\|\xi\| > u a_{n})=u^{-\alpha} \qquad \textrm{for all} \ u>0.
\end{equation}
In the one-dimensional case regular variation is characterized by
\begin{equation*}\label{e:regvar}
 \Pr(|\xi| > x) = x^{-\alpha} L(x), \qquad x>0,
\end{equation*}
for some slowly varying function $L$ (i.e. $\lim_{x \to \infty}L(tx)/L(x) = 1$ for all $t>0$), and the tail balance condition
\begin{equation*}\label{eq:pq}
  \lim_{x \to \infty} \frac{\Pr(\xi > x)}{\Pr(|\xi| > x)}=p, \qquad
    \lim_{x \to \infty} \frac{\Pr(\xi < -x)}{\Pr(|\xi| > x)}=q,
\end{equation*}
where $p \in [0,1]$ and $p+q=1$.

Now we say that a strictly stationary $\mathbb{R}^{d}$--valued random process $(\xi_{n})_{n \in \mathbb{Z}}$ is jointly regularly varying with index
$\alpha >0$ if for any nonnegative integer $k$ the
$kd$-dimensional random vector $\xi = (\xi_{1}, \ldots ,
\xi_{k})$ is multivariate regularly varying with index $\alpha$.
According to Basrak and Segers~\cite{BaSe} the joint regular variation property of the sequence $(\xi_{n})$ is equivalent to the existence of a process $(Y_n)_{n \in \mathbb{Z}}$
which satisfies $\Pr(\|Y_0\| > y) = y^{-\alpha}$ for $y \geq 1$, and
\begin{equation}\label{e:tailprocess}
  \bigl( (x^{-1}\ \xi_n)_{n \in \mathbb{Z}} \, \big| \, \| \xi_0\| > x \bigr)
  \fidi (Y_n)_{n \in \mathbb{Z}} \qquad \textrm{as} \ x \to \infty,
\end{equation}
where ``$\fidi$'' denotes convergence of finite-dimensional
distributions. The process $(Y_{n})$ is called
the tail process of $(\xi_{n})$. Further, the spectral tail process $(\Theta_{n})_{n \in \mathbb{Z}}$, defined as $\Theta_{n} = Y_{n}/\|Y_{0}\|$, is independent of $\|Y_{0}\|$ and satisfies
\begin{equation*}\label{e:spectraltailprocess}
  \bigl( (\|\xi_{0}\|^{-1}\ \xi_n)_{n \in \mathbb{Z}} \, \big| \, \| \xi_0\| > x \bigr)
  \fidi (\Theta_n)_{n \in \mathbb{Z}} \qquad \textrm{as} \ x \to \infty.
\end{equation*}
The law of $\Theta_{0} \in \mathbb{S}^{d-1}$ is the spectral measure of $\xi_{0}$. The tail process of a time series $(\xi_{n})_{n \in \mathbb{Z}}$, which is jointly regularly varying with index $\alpha >0$ and consisting of independent random vectors, has a very simple representation: $Y_{n}=0$ for $n \neq 0$, and $Y_{0}=Y \Theta_{0}$, where the law of $\Theta_{0}$ is the spectral measure of $\xi_{0}$, $\ln Y$ has exponential distribution with parameter $\alpha$, and $\Theta_{0}$ and $Y$ are independent.

\subsection{Linear processes}

Let $(Z_{i})_{i \in \mathbb{Z}}$ be a strictly stationary sequence of random vectors in $\mathbb{R}^{d}$, and assume $Z_{1}$ is multivariate regularly varying with index $\alpha >0$.
We study multivariate linear processes with random coefficients, defined by
\begin{equation}\label{e:MArandom}
X_{i} = \sum_{j=0}^{\infty}C_{j}Z_{i-j}, \qquad i \in \mathbb{Z},
\end{equation}
where
$(C_{j})_{j \geq 0 }$ is a sequence of
$d \times d$ matrices (with real--valued random variables as entries)
independent of $(Z_{i})$ such that the above series is a.s.~convergent. One sufficient condition for that is
\begin{equation}\label{e:momcond}
\sum_{j=0}^{\infty} \mathrm{E} \|C_{j}\|^{\delta} < \infty \qquad \textrm{for some}  \ \delta < \alpha,\,0 < \delta \leq 1,
\end{equation}
where for a $d \times d$ matrix $C=(C_{i,j})$, $\|C\|$ denotes the operator norm
$$ \|C\| = \sup \{ \|Cx\| : x \in \mathbb{R}^{d}, \|x\| =1 \} = \max_{i=1,\ldots,d} \sum_{j=1}^{d}|C_{i,j}|.$$
The regular variation property and Karamata's theorem imply $\mathrm{E}\|Z_{1}\|^{\beta} < \infty$ for every $\beta \in (0,\alpha)$ (cf.~Bingham et al.~\cite{BiGoTe89}, Proposition 1.5.10), which together with concavity, independence of $C_{i}$ and $Z_{i-j}$ for all $i$ and $j$,
the inequality $\|Cx\| \leq \|C\| \cdot \|x\|$,
and the moment condition (\ref{e:momcond}) yield the a.s.~convergence of the series in (\ref{e:MArandom}):
$$ \mathrm{E}\|X_{i}\|^{\delta} \leq \mathrm{E} \bigg( \sum_{j=0}^{\infty} \|C_{j}Z_{i-j}\| \bigg)^{\delta} \leq \sum_{j=0}^{\infty} \mathrm{E}\|C_{j}\|^{\delta} \mathrm{E}\|Z_{i-j}\|^{\delta} = \mathrm{E}\|Z_{1}\|^{\delta} \sum_{j=0}^{\infty}\mathrm{E}\|C_{j}\|^{\delta} < \infty.$$
When the sequence $(Z_{i})$ is i.i.d.~the a.s.~convergence of the series in the definition of linear processes $X_{i}$ holds also with the following moment conditions:
$\mathrm{E}Z_{1}=0$ if $\alpha >1$, and there exists $\delta \in (0, \alpha)$ such that
$$ \begin{array}{ll}
     \displaystyle \sum_{j=0}^{\infty}\mathrm{E}( \|C_{j}\|^{\alpha - \delta} + \|C_{j}\|^{\alpha + \delta})< \infty & \quad \textrm{if} \ \alpha \in (0,1) \cup (1,2),\\[1.3em]
     \mathrm{E} \bigg[ \bigg( \displaystyle \sum_{j=0}^{\infty}\|C_{j}\|^{\alpha-\delta} \bigg)^{(\alpha+\delta)/(\alpha-\delta)} \bigg] < \infty & \quad \textrm{if} \ \alpha \in \{1, 2 \},\\[1.5em]
      \mathrm{E} \bigg[ \bigg( \displaystyle \sum_{j=0}^{\infty}\|C_{j}\|^{2} \bigg)^{(\alpha+\delta)/2} \bigg] < \infty & \quad \textrm{if} \ \alpha >2,
                                       \end{array}$$
see Kulik and Soulier~\cite{KuSo}, Theorem 4.1.2.

\subsection{Skorokhod topologies}

Denote by $D([0,1], \mathbb{R}^{d})$ the space of all right-continuous $\mathbb{R}^{d}$--valued functions on $[0,1]$ with left limits. For $x \in D([0,1],
\mathbb{R}^{d})$ the completed (thick) graph of $x$ is defined as
\[
  G_{x}
  = \{ (t,z) \in [0,1] \times \mathbb{R}^{d} : z \in [[x(t-), x(t)]]\},
\]
where $x(t-)$ is the left limit of $x$ at $t$ and $[[a,b]]$ is the product segment, i.e.
$[[a,b]]=[a^{(1)},b^{(1)}] \times \ldots \times [a^{(d)},b^{(d)}]$
for $a=(a^{(1)}, \ldots, a^{(d)}), b=(b^{(1)}, \ldots, b^{(d)}) \in
\mathbb{R}^{d}$, and $[a^{(i)}, b^{(i)}]$ coincides with the closed interval $[a^{(i)} \wedge b^{(i)}, a^{(i)} \vee b^{(i)}]$, with $c \wedge d = \min \{c,d \}$ for $c, d \in \mathbb{R}$. On the graph $G_{x}$ we define an order by saying that $(t_{1},z_{1}) \le
(t_{2},z_{2})$ if either
$$ \begin{array}{ll}
   (i) \ \ t_{1} < t_{2}, \quad \textrm{or}& \quad  \\[0.7em]
 (ii) \ \ |x_{j}(t_{1}-) - z_{1}^{(j)}| \le |x_{j}(t_{2}-) - z_{2}^{(j)}| & \quad \textrm{for all} \ j=1, 2, \ldots, d.
                                 \end{array}$$
                                  A weak parametric representation
of the graph $G_{x}$ is a continuous nondecreasing function $(r,u)$
mapping $[0,1]$ into $G_{x}$, with $r$ being the
time component and $u$ being the spatial component, such that $r(0)=0,
r(1)=1$ and $u(1)=x(1)$. Let $\Pi_{w}(x)$ denote the set of weak
parametric representations of the graph $G_{x}$. For $x_{1},x_{2}
\in D([0,1], \mathbb{R}^{d})$ define
\[
  d_{w}(x_{1},x_{2})
  = \inf \{ \|r_{1}-r_{2}\|_{[0,1]} \vee \|u_{1}-u_{2}\|_{[0,1]} : (r_{i},u_{i}) \in \Pi_{w}(x_{i}), i=1,2 \},
\]
where $\|x\|_{[0,1]} = \sup \{ \|x(t)\| : t \in [0,1] \}$. Now we
say that a sequence $(x_{n})_{n}$ converges to $x$ in $D([0,1], \mathbb{R}^{d})$ in the weak Skorokhod $M_{1}$
topology if $d_{w}(x_{n},x)\to 0$ as $n \to \infty$.

If we replace the graph $G_{x}$ with the completed (thin) graph
\[
  \Gamma_{x}
  = \{ (t,z) \in [0,1] \times \mathbb{R}^{d} : z= \lambda x(t-) + (1-\lambda)x(t) \ \text{for some}\ \lambda \in [0,1] \},
\]
and weak parametric representations with strong parametric representations, that is continuous nondecreasing functions $(r,u)$ mapping $[0,1]$ onto $\Gamma_{x}$, then we obtain the standard (or strong) Skorokhod $M_{1}$ topology. This topology is induced by the metric
$$d_{M_{1}}(x_{1},x_{2})
  = \inf \{ \|r_{1}-r_{2}\|_{[0,1]} \vee \|u_{1}-u_{2}\|_{[0,1]} : (r_{i},u_{i}) \in \Pi_{s}(x_{i}), i=1,2 \},$$
where $\Pi_{s}(x)$ is the set of strong parametric representations of the graph $\Gamma_{x}$. Since $\Pi_{s}(x) \subseteq \Pi_{w}(x)$ for all $x \in D([0,1], \mathbb{R}^{d})$, the weak $M_{1}$ topology is weaker than the standard $M_{1}$ topology on $D([0,1],
\mathbb{R}^{d})$, but they coincide for $d=1$.

The weak $M_{1}$ topology coincides with the topology induced by the metric
\begin{equation}\label{e:defdp}
 d_{p}(x_{1},x_{2})=\max \{ d_{M_{1}}(x_{1}^{(j)},x_{2}^{(j)}) :
j=1,\ldots,d\}
\end{equation}
 for $x_{i}=(x_{i}^{(1)}, \ldots, x_{i}^{(d)}) \in D([0,1],
 \mathbb{R}^{d})$ and $i=1,2$. The metric $d_{p}$ induces the product topology on $D([0,1], \mathbb{R}^{d})$.

By using parametric representations in which only the time component $r$ is nondecreasing instead of $(r,u)$ we obtain Skorokhod's weak and strong $M_{2}$ topologies. There is a useful characterization of weak and strong $M_{2}$ topologies by the Hausdorff metric on the space of completed graphs: for the strong version the $M_{2}$ distance between two functions $x_{1}, x_{2} \in D([0,1], \mathbb{R}^{d})$ is given by
$$ d_{M_{2}}(x_{1}, x_{2}) = \bigg(\sup_{a \in \Gamma_{x_{1}}} \inf_{b \in \Gamma_{x_{2}}} d(a,b) \bigg) \vee \bigg(\sup_{a \in \Gamma_{x_{2}}} \inf_{b \in \Gamma_{x_{1}}} d(a,b) \bigg),$$
where $d$ is the metric on $\mathbb{R}^{d+1}$ defined by $d(a, b)=\max\{|a^{(i)}-b^{(i)}| : i=1,\ldots,d+1\}$ for $a=(a^{(1)},\ldots,a^{(d+1)}), b=(b^{(1)}, \ldots, b^{(d+1)}) \in \mathbb{R}^{d+1}$.
The metric $d_{M_{2}}$ induces the strong $M_{2}$ topology, which is weaker than the $M_{1}$ topology. For more details and discussion on the $M_{1}$ and $M_{2}$ topologies we refer to Whitt~\cite{Whitt02}, sections 12.3-12.5.

Since the sample paths of the partial maxima processes in (\ref{e:convPMP}) are non-decreasing, we will restrict our attention to the subspace $D_{\uparrow}([0,1], \mathbb{R}^{d})$ of functions $x$ in $D([0,1], \mathbb{R}^{d})$ for which the coordinate functions $x^{(i)}$ are non-decreasing for all $i=1,\ldots,d$. The following two lemmas about the $M_{1}$ continuity of multiplication and maximum of two $\mathbb{R}$--valued c\`{a}dl\`{a}g functions will be used in the next section. The first lemma is based on Theorem 13.3.2 in Whitt~\cite{Whitt02}, and the second one follows easily from the fact that for monotone functions $M_{1}$ convergence is equivalent to point-wise convergence in a dense subset of $[0,1]$ including $0$ and $1$ (cf.~Corollary 12.5.1 in Whitt~\cite{Whitt02}). Denote by $\textrm{Disc}(x)$ the set of discontinuity points of $x \in D([0,1], \mathbb{R})$.


\begin{lem}\label{l:contmultpl}
Suppose that $x_{n} \to x$ and $y_{n} \to y$ in $D([0,1], \mathbb{R})$ with the $M_{1}$ topology. If for each $t \in \textrm{Disc}(x) \cap \textrm{Disc}(y)$, $x(t)$, $x(t-)$, $y(t)$ and $y(t-)$ are all nonnegative and $[x(t)-x(t-)][y(t)-y(t-)] \geq 0$, then $x_{n}y_{n} \to xy$ in $D([0,1], \mathbb{R})$ with the $M_{1}$ topology, where $(xy)(t) = x(t)y(t)$ for $t \in [0,1]$.
\end{lem}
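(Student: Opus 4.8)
The plan is to reduce multiplication in $D([0,1],\mathbb{R})$ to the known result, Theorem 13.3.2 in Whitt~\cite{Whitt02}, which gives joint $M_1$ continuity of the product map at pairs $(x,y)$ satisfying a sign-consistency condition on their common discontinuities together with a no-opposite-jumps hypothesis. First I would recall the precise statement of Whitt's theorem: the product map is continuous at $(x,y)$ in the $M_1$ sense provided that at each common discontinuity point the jumps do not have ``opposite orientation'' in a sense that would force the parametric representations of $x_ny_n$ to fail to track a representation of $xy$. The hypotheses I am given---that $x(t), x(t-), y(t), y(t-)$ are all nonnegative and $[x(t)-x(t-)][y(t)-y(t-)]\ge 0$ at each $t\in\mathrm{Disc}(x)\cap\mathrm{Disc}(y)$---are exactly engineered to meet Whitt's sign condition, so the core of the argument is verifying that my hypotheses imply his.

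Second, I would carry out that verification carefully. The heart of $M_1$ continuity of products is that at a common jump the values $x(t-)y(t-)$ and $x(t)y(t)$ should be the endpoints of a segment that the product $x_ny_n$ traverses monotonically (in the thin-graph sense), so that one can patch together parametric representations. Nonnegativity of all four one-sided values guarantees that the product of two increasing (or two decreasing) jumps is again a jump in a consistent direction: if both $x$ and $y$ jump up at $t$ (so $x(t)\ge x(t-)\ge 0$ and $y(t)\ge y(t-)\ge 0$), then $x(t)y(t)\ge x(t-)y(t-)$, and similarly the condition $[x(t)-x(t-)][y(t)-y(t-)]\ge 0$ rules out the one genuinely problematic configuration, namely one factor jumping up while the other jumps down, which could produce a non-monotone excursion of the product across the jump. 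I would spell out the two cases (both jumps nonnegative, both nonpositive) and note that in each case the product's value moves monotonically between $x(t-)y(t-)$ and $x(t)y(t)$, matching the structure Whitt requires.

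Third, having matched hypotheses, I would invoke the continuous mapping theorem structure implicit in Whitt's result: since $x_n\to x$ and $y_n\to y$ in $M_1$ and the product map is continuous at the limit pair $(x,y)$, we conclude $x_ny_n\to xy$ in $M_1$. It is worth remarking that for $d=1$ the weak and strong $M_1$ topologies coincide, so there is no ambiguity about which $M_1$ is meant here, and the definition $(xy)(t)=x(t)y(t)$ is the natural pointwise product whose c\`adl\`ag property follows from that of $x$ and $y$.

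The main obstacle I anticipate is not the invocation of Whitt's theorem but the bookkeeping needed to confirm that my stated sign conditions are precisely equivalent to (or strictly stronger than) the hypotheses in Theorem 13.3.2, whose formulation there may be phrased in terms of the relative orientation of jumps or in terms of local nonnegativity rather than the clean product-of-increments condition used here. In particular I would need to handle common discontinuities where one or both jumps are present but of small size, and to make sure the nonnegativity assumption is genuinely used to exclude sign cancellations in the product near a jump; the delicate point is that $[x(t)-x(t-)][y(t)-y(t-)]\ge 0$ alone does not control the product unless the base values are also nonnegative, which is exactly why both hypotheses appear together. Verifying this equivalence is a short but careful argument, and it is the step where an error would most likely hide.
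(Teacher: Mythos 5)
Your proposal takes essentially the same route as the paper: the paper offers no independent proof of this lemma, stating only that it ``is based on Theorem 13.3.2 in Whitt~\cite{Whitt02}'', and indeed the lemma is a near-verbatim restatement of that theorem (with $[0,T]$ specialized to $[0,1]$), so the hypothesis-matching step you anticipate as the delicate point is in fact immediate. Your plan---recall Whitt's theorem, check that the nonnegativity and same-sign-jump conditions coincide with his, and conclude---is exactly what the paper does implicitly, and it is correct.
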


In particular, if $\textrm{Disc}(x)=\emptyset$, by Lemma~\ref{l:contmultpl} $d_{M_{1}}(x_{n},x) \to 0$ and $d_{M_{1}}(y_{n},y) \to 0$ imply $d_{M_{1}}(x_{n}y_{n}, xy) \to 0$ as $n \to \infty$.

\begin{lem}\label{l:contmax}
The function $h \colon D_{\uparrow}([0,1], \mathbb{R}^{2}) \to D_{\uparrow}([0,1], \mathbb{R})$ defined by
$h(x,y)= x \vee y$, where
$$ (x \vee y)(t) = x(t) \vee y(t), \qquad t \in [0,1],$$
is continuous
when $D_{\uparrow}([0,1], \mathbb{R}^{2})$ is endowed with the weak $M_{1}$ topology and $D_{\uparrow}([0,1], \mathbb{R})$ is endowed with the standard $M_{1}$ topology.
\end{lem}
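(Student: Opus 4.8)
The plan is to transfer everything to pointwise statements and then use that the binary maximum on $\mathbb{R}$ is continuous. Since the weak $M_{1}$ topology is induced by the metric $d_{w}$ and the standard $M_{1}$ topology by $d_{M_{1}}$, both spaces are metric, so it suffices to establish sequential continuity of $h$.

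Suppose $(x_{n},y_{n}) \to (x,y)$ in $D_{\uparrow}([0,1],\mathbb{R}^{2})$ in the weak $M_{1}$ topology. By (\ref{e:defdp}) the weak $M_{1}$ topology coincides with the product topology, so this is equivalent to $x_{n} \to x$ and $y_{n} \to y$ separately in $D([0,1],\mathbb{R})$; and because the weak and standard $M_{1}$ topologies coincide for $d=1$, both of these convergences take place in the standard $M_{1}$ topology. I would first record that $x \vee y$ and each $x_{n} \vee y_{n}$ indeed lie in $D_{\uparrow}([0,1],\mathbb{R})$: the pointwise maximum of two non-decreasing functions is non-decreasing, and continuity of $(a,b) \mapsto a \vee b$ shows that the maximum of two c\`{a}dl\`{a}g functions is again c\`{a}dl\`{a}g.

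The key step uses the monotone characterization of $M_{1}$ convergence cited before the lemma (Corollary 12.5.1 in Whitt~\cite{Whitt02}): for non-decreasing functions, $M_{1}$ convergence is equivalent to pointwise convergence at $0$, at $1$, and at every continuity point of the limit. Applying it to $x_{n} \to x$ and $y_{n} \to y$, I obtain $x_{n}(t) \to x(t)$ and $y_{n}(t) \to y(t)$ for every $t$ in $A := \{0,1\} \cup \big([0,1] \setminus (\textrm{Disc}(x) \cup \textrm{Disc}(y))\big)$. Since $x$ and $y$ are monotone, $\textrm{Disc}(x) \cup \textrm{Disc}(y)$ is at most countable, so $A$ is dense in $[0,1]$ and contains the endpoints. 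By continuity of the maximum on $\mathbb{R}^{2}$, $(x_{n} \vee y_{n})(t) = x_{n}(t) \vee y_{n}(t) \to x(t) \vee y(t) = (x \vee y)(t)$ for every $t \in A$. As $x_{n} \vee y_{n}$ and $x \vee y$ are non-decreasing and $A$ is dense with $\{0,1\} \subseteq A$, the converse direction of the same characterization yields $x_{n} \vee y_{n} \to x \vee y$ in the standard $M_{1}$ topology, which is exactly what is needed.

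The only point requiring care -- the nearest thing to an obstacle -- is purely bookkeeping: one must use a single dense set that serves both coordinates simultaneously and that contains the endpoints. This is why I intersect the two continuity-point sets, whose complement remains countable and hence yields a dense set, rather than merely intersecting two abstract dense sets (which could fail to be dense), and why I separately note that $M_{1}$ convergence forces pointwise convergence at $0$ and $1$. Everything else reduces to the elementary continuity of $\vee$ on $\mathbb{R}^{2}$.
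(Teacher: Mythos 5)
Your proof is correct and follows essentially the same route the paper takes: the paper disposes of this lemma with precisely the observation you elaborate, namely that for monotone functions $M_{1}$ convergence is equivalent to pointwise convergence on a dense subset of $[0,1]$ containing $0$ and $1$ (Corollary 12.5.1 in Whitt), combined with the fact that the weak $M_{1}$ topology is the product topology. Your extra care in taking the dense set to be the complement of $\textrm{Disc}(x) \cup \textrm{Disc}(y)$ (rather than intersecting two unspecified dense sets) is exactly the right way to make the one-line argument rigorous.
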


These two lemmas are easily generalized to the multivariate case, in which we take products or maxima of (some) component functions. We only have to take the weak $M_{1}$ topology for each space $D_{\uparrow}([0,1], \mathbb{R}^{k})$ with $k \geq 2$. For example, using Lemma~\ref{l:contmax} and the definition of the metric $d_{p}$ in (\ref{e:defdp}) (which generates the weak $M_{1}$ topology) it can be shown that the function $h_{1} \colon D_{\uparrow}([0,1], \mathbb{R}^{4}) \to D_{\uparrow}([0,1], \mathbb{R}^{2})$, defined by $h_{1}(x)=(x^{(1)} \vee x^{(2)}, x^{(3)} \vee x^{(4)})$ for $x=(x^{(1)}, x^{(2)}, x^{(3)}, x^{(4)}) \in D_{\uparrow}([0,1], \mathbb{R}^{4})$, is continuous when both spaces $D_{\uparrow}([0,1], \mathbb{R}^{4})$ and $D_{\uparrow}([0,1], \mathbb{R}^{2})$ are endowed with the weak $M_{1}$ topology.
By induction the same holds if we take maxima of more than two components on some places.

The Skorokhod $M$ topologies are weaker than the more frequently used $J_{1}$ topology, but still some useful functions preserve $M$ convergence, which allows to establish new stochastic-process limits from given ones. In Chapter 13 of Whitt~\cite{Whitt02} four functions of this kind are considered: composition, supremum, reflection and inverse. The composition map plays an important role in establishing functional limit theorems involving a random time change, which can, for example, be applied to random sums and maxima (cf.~Theorem 13.2.3 and Corollary 13.3.2 in~\cite{Whitt02}). The supremum and the reflection maps are connected to queueing applications, while the inverse function is used in studying counting processes (see Chapter 14 and Section 7.3 in~\cite{Whitt02} for details).

\subsection{Point processes}\label{ss:PP}

Let $(Z_{i})_{i \in \mathbb{Z}}$ be a strictly stationary sequence of regularly varying $\mathbb{R}^{d}$--valued random vectors with index $\alpha >0$. Assume the elements of this sequence are pairwise asymptotically independent in the sense that
\begin{equation}\label{e:asyind}
\lim_{x \to \infty} \frac{\Pr(\|Z_{i}\| >x, \|Z_{j}\|>x)}{\Pr(\|Z_{1}\|>x)}=0 \qquad \textrm{for all} \ i \neq j.
\end{equation}
We assume also asymptotical independence of the components of each random vector $Z_{i}$:
\begin{equation}\label{e:asyindcomp}
\lim_{x \to \infty} \Pr(|Z_{i}^{(j)}| >x\,\big|\,|Z_{i}^{(k)}|>x)=0 \qquad \textrm{for all} \ j,k \in \{1,\ldots,d\}, j \neq k.
\end{equation}
The asymptotic independence condition (\ref{e:asyind}) implies that the sequence $(Z_{i})$ is jointly regularly varying. This follows by induction from the next result (cf.~Kulik and Soulier~\cite{KuSo}, Proposition 2.1.8).

\begin{prop}\label{p:jrvai}
Let $X$ and $Y$ be identically distributed, regularly varying random vectors with the same index $\alpha >0$. If they are asymptotically independent, then the random vector $(X,Y)$ is regularly varying with index $\alpha$.
\end{prop}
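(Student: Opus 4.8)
The plan is to reduce the multivariate statement to the one-dimensional characterization of regular variation together with the asymptotic independence hypothesis, using the vague-convergence formulation in \eqref{e:regvarvague}. Since $X$ is regularly varying with index $\alpha$, there is a sequence $(a_n)$ with $n\Pr(\|X\|>a_n)\to 1$ and a non-null Radon measure $\mu_X$ on $\mathcal{B}(\EE^d)$ such that $n\Pr(a_n^{-1}X\in\cdot\,)\vto\mu_X$, with $\mu_X$ concentrated on $\RR^d$ and $\alpha$-homogeneous; the same sequence $(a_n)$ works for $Y$ (as $X$ and $Y$ are identically distributed) giving $n\Pr(a_n^{-1}Y\in\cdot\,)\vto\mu_Y=\mu_X$. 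The goal is to show $n\Pr(a_n^{-1}(X,Y)\in\cdot\,)\vto\mu$ on $\mathcal{B}(\EE^{2d})$ for some non-null $\alpha$-homogeneous limit $\mu$, and to identify $\mu$ as the ``sum'' of the two marginal measures sitting on the coordinate subspaces $\RR^d\times\{0\}$ and $\{0\}\times\RR^d$.

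First I would fix a relatively compact rectangle in $\EE^{2d}$ of the product form $A\times B$ where $A,B$ are Borel subsets of $[-\infty,\infty]^d$, and analyze $n\Pr(a_n^{-1}X\in A,\,a_n^{-1}Y\in B)$. The key dichotomy is whether $0$ lies in the closure of $A$ and of $B$. If the set is bounded away from zero in the joint space, then at least one of $A,B$ is bounded away from zero in $\RR^d$; say $A$ is. Splitting according to whether $B$ is also bounded away from zero, the asymptotic independence condition \eqref{e:asyind} applied to $X$ and $Y$ forces the ``both large'' contribution to vanish: $n\Pr(\|X\|>\eps a_n,\,\|Y\|>\eps a_n)\to 0$ for every $\eps>0$, since $n\Pr(\|X\|>a_n)$ is bounded and the ratio in \eqref{e:asyind} tends to zero. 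Hence in the limit the mass splits, with $X$ large and $Y$ near $0$ contributing $\mu_X(A)\mathbf{1}\{0\in\bar B\}$, and symmetrically for $Y$ large. I would then verify that the candidate limit $\mu(A\times B)=\mu_X(A)\delta_0(B)+\delta_0(A)\mu_Y(B)$ (interpreted via the indicator that $0$ is in the closure of the other factor) is a Radon measure, is non-null, and satisfies the scaling relation $\mu(u\,\cdot\,)=u^{-\alpha}\mu(\cdot\,)$ for all $u>0$, which is inherited directly from the homogeneity of $\mu_X$ and $\mu_Y$.

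To upgrade from rectangles to genuine vague convergence on $\EE^{2d}$, I would use the standard criterion that vague convergence follows from convergence of $\int f\,d(n\Pr(a_n^{-1}(X,Y)\in\cdot\,))$ to $\int f\,d\mu$ for every $f\in C_K^+(\EE^{2d})$, together with a continuity/boundary argument: it suffices to check convergence on a $\mu$-continuity determining class, and the product sets whose boundaries are $\mu$-null form such a class. Alternatively, and perhaps more cleanly, I would deduce joint regular variation directly from the polar form \eqref{e:regvar1}: writing $\|(X,Y)\|=\|X\|\vee\|Y\|$ in the max-norm, the event $\{\|(X,Y)\|>x\}$ decomposes up to an asymptotically negligible overlap (again by \eqref{e:asyind}) into $\{\|X\|>x,\|Y\|\le x\}$ and $\{\|Y\|>x,\|X\|\le x\}$, on each of which the normalized vector $(X,Y)/\|(X,Y)\|$ concentrates on one of the two coordinate hemispheres of $\Sb^{2d-1}$, giving the spectral measure of $(X,Y)$ as a convex combination of the pushed-forward spectral measures of $X$ and of $Y$.

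The main obstacle I anticipate is handling the overlap region cleanly, that is, showing rigorously that the ``both components large'' contribution is asymptotically negligible uniformly enough to pass to the limit, rather than just pointwise on a fixed rectangle. The delicate point is that \eqref{e:asyind} is a statement about the norms $\|X\|,\|Y\|$, whereas for full vague convergence one must control test functions supported near the two coordinate axes where one component is large and the other is merely small but nonzero; one has to argue that the contribution from $\{\eps a_n<\|Y\|\le \delta a_n\}$ vanishes as $n\to\infty$ and then $\eps\to0$, invoking uniform integrability supplied by the regular variation tail estimate \eqref{e:nizrv0} and a Potter-type bound. Once this negligibility is established uniformly over the relevant scales, the identification of $\mu$ and its scaling property are routine, and non-nullity is immediate since $\mu_X$ is non-null by the regular variation of $X$.
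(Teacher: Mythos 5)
Your first route is exactly the paper's proof: the same key computation showing $n \Pr(\|X\| > \eps a_{n}, \|Y\| > \eps a_{n}) \to 0$ via \eqref{e:nizrv0} and \eqref{e:asyind}, the same limit measure concentrated on $(\RR^{d} \times \{0\}) \cup (\{0\} \times \RR^{d})$ with both marginals equal to $\mu$, and the same identification of the index $\alpha$ from homogeneity, so the proposal is correct and matches the paper. The only remark worth making is that the obstacle you anticipate in your last paragraph is illusory: for any fixed $\eps > 0$ the region $\{\eps a_{n} < \|Y\| \leq \delta a_{n}\}$ (with $\|X\|$ large) is contained in the both-large event at level $\eps$, so its contribution vanishes by asymptotic independence alone, while on $\{\|Y\| \leq \eps a_{n}\}$ the uniform continuity of a test function $f \in C_{K}^{+}(\EE^{2d})$ gives $f(a_{n}^{-1}X, a_{n}^{-1}Y) = f(a_{n}^{-1}X, 0) + O(\omega_{f}(\eps))$, so the marginal convergence finishes the argument and no Potter-type bound or uniform integrability is needed.
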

\begin{proof}
Let $(a_{n})$ be a sequence of positive real numbers tending to infinity such that
$$ n \Pr (\|X\| > a_{n}) \to 1 \qquad \textrm{as} \ n \to \infty.$$
 Since $X$ is regularly varying, by (\ref{e:regvarvague}) it holds that $n \Pr(a_{n}^{-1}X \in \cdot\,) \vto \mu(\,\cdot\,)$ as $n \to \infty$, for some Radon measure $\mu$.
For $\epsilon >0$ denote by $B_{\epsilon} = \{ x \in \mathbb{R}^{d} : \|x\| \leq \epsilon \}$ the closed ball of radius $\epsilon$ centered at origin. For $\epsilon_{1}>0$ and $\epsilon_{2}>0$, with $\epsilon = \epsilon_{1} \wedge \epsilon_{2}$, we have
\begin{eqnarray*}
n \Pr(a_{n}^{-1}(X,Y) \in B_{\epsilon_{1}}^{c} \times B_{\epsilon_{2}}^{c}) &=& n \Pr(\|X\| > \epsilon_{1}a_{n}, \|Y\| > \epsilon_{2}a_{n})\\[0.4em]
& \leq & n \Pr(\|X\| > \epsilon a_{n})\,\frac{\Pr(\|X\| > \epsilon a_{n}, \|Y\| > \epsilon a_{n})}{\Pr(\|X\| > \epsilon a_{n})}.
\end{eqnarray*}
Since by (\ref{e:nizrv0}) it holds that $n \Pr(\|X\| > \epsilon a_{n}) \to \epsilon^{-\alpha}$ as $n \to \infty$, an application of the asymptotic independence relation (\ref{e:asyind}) for random vectors $X$ and $Y$ yields
$$ n \Pr(a_{n}^{-1}(X,Y) \in B_{\epsilon_{1}}^{c} \times B_{\epsilon_{2}}^{c}) \to 0 \qquad \textrm{as} \ n \to \infty.$$
Hence $n \Pr(a_{n}^{-1}(X,Y) \in \cdot\,) \vto \widetilde{\mu} (\,\cdot\,)$ on $\mathcal{B}(\EE^{2d} \setminus \{0\})$, where $\widetilde{\mu}$ is a Radon measure that concentrates on $(\{0\} \times \mathbb{R}^{d}) \cup (\mathbb{R}^{d} \times \{0\})$, and for any $B \in \mathcal{B}(\EE^{d})$
$$ \widetilde{\mu}(\{0\} \times B) = \widetilde{\mu}(B \times \{0\})=\mu(B).$$ Hence the random vector $(X,Y)$ is regularly varying. For $r >0$ it holds that
$$ \widetilde{\mu}((r,\infty)^{d} \times \EE^{d}) = \widetilde{\mu}((r,\infty)^{d} \times \{0\}) = \mu((r,\infty)^{d}) = r^{-\alpha}\mu((1,\infty)^{d}).$$
In particular, for $r=1$ we have $\widetilde{\mu}((1,\infty)^{d} \times \EE^{d}) = \mu((1,\infty)^{d})$, and therefore
$$ \widetilde{\mu}((r,\infty)^{d} \times \EE^{d}) = r^{-\alpha}\widetilde{\mu}((1,\infty)^{d} \times \EE^{d}),$$
from which we conclude that the index of regular variation of $(X,Y)$ is $\alpha$.
\end{proof}

Let $(Y_{i})$ be the tail process of the sequence $(Z_{i})$. By the definition of the tail process in (\ref{e:tailprocess}) we have, for $y \geq 1$,
\begin{eqnarray*}
\Pr(\|Y_{0}\| >y) &=& \lim_{x \to \infty} \Pr \bigl( x^{-1} \|Z_{0}\| >y \, \big| \, \| Z_{0} \| > x \bigr) = \lim_{x \to \infty} \frac{\Pr(\|Z_{0}\|> yx, \|Z_{0}\| >x)}{\Pr(\|Z_{0}\|>x)}\\[0.5em]
 & =& \lim_{x \to \infty} \frac{\Pr(\|Z_{0}\|>yx)}{\Pr(\|Z_{0}\|>x)} = y^{-\alpha},
\end{eqnarray*}
 and by (\ref{e:asyind}) we obtain, for every $u \in (0,1)$ and $i \neq 0$,
\begin{eqnarray*}
\Pr(\|Y_{i}\| >u) &=& \lim_{x \to \infty} \Pr \bigl( x^{-1} \|Z_{i}\| >u \, \big| \, \| Z_{0} \| > x \bigr) = \lim_{x \to \infty} \frac{\Pr(\|Z_{i}\|> ux, \|Z_{0}\| >x)}{\Pr(\|Z_{0}\|>x)}\\[0.5em]
 & \leq & \limsup_{x \to \infty} \frac{\Pr(\|Z_{i}\|>ux, \|Z_{0}\|>ux)}{\Pr(\|Z_{0}\|>ux)} \cdot \frac{\Pr(\|Z_{0}\|>ux)}{\Pr(\|Z_{0}\|>x)} = 0,
\end{eqnarray*}
 i.e.~$\Pr(\|Y_{i}\|>u)=0$. Hence we conclude that the tail process is the same as in the i.i.d.~case, that is $Y_{i}=0$ for $i \neq 0$, and $\Pr(\|Y_0\| > y) = y^{-\alpha}$ for $y \geq 1$. Similarly, relation (\ref{e:asyindcomp}) implies that $Y_{0}$ a.s.~has no two nonzero components.

Define the time-space point processes
\begin{equation*}\label{E:ppspacetime}
 N_{n} = \sum_{i=1}^{n} \delta_{(i / n,\,Z_{i} / a_{n})} \qquad \textrm{for all} \ n\in \mathbb{N},
\end{equation*}
with $(a_{n})$ being a sequence of positive real numbers such that
\begin{equation}\label{eq:niz}
n \Pr(\|Z_{1}\| > a_{n}) \to 1 \qquad \textrm{as} \ n \to \infty.
\end{equation}
By relation (\ref{e:nizrv0}) we have
\begin{equation}\label{e:nizrv}
\lim_{n \to \infty} n \Pr(\|Z_{1}\| > u a_{n})=u^{-\alpha} \qquad \textrm{for all} \ u>0,
\end{equation}
and in particular $\Pr(\|Z_{1}\| > u a_{n}) \to 0$ as $n \to \infty$.
The point process convergence for the sequence $(N_{n})$ on the space $[0,1] \times \EE^{d}$ was obtained by Basrak and Tafro~\cite{BaTa16} under joint regular variation and the following two weak dependence conditions.

\begin{cond}\label{c:mixcond1}
There exists a sequence of positive integers $(r_{n})$ such that $r_{n} \to \infty $ and $r_{n} / n \to 0$ as $n \to \infty$ and such that for every nonnegative continuous function $f$ on $[0,1] \times \mathbb{E}^{d}$ with compact support, denoting $k_{n} = \lfloor n / r_{n} \rfloor$, as $n \to \infty$,
\begin{equation}\label{e:mixcon}
 \E \biggl[ \exp \biggl\{ - \sum_{i=1}^{n} f \biggl(\frac{i}{n}, \frac{Z_{i}}{a_{n}}
 \biggr) \biggr\} \biggr]
 - \prod_{k=1}^{k_{n}} \E \biggl[ \exp \biggl\{ - \sum_{i=1}^{r_{n}} f \biggl(\frac{kr_{n}}{n}, \frac{Z_{i}}{a_{n}} \biggr) \biggr\} \biggr] \to 0.
\end{equation}
\end{cond}

\begin{cond}\label{c:mixcond2}
There exists a sequence of positive integers $(r_{n})$ such that $r_{n} \to \infty $ and $r_{n} / n \to 0$ as $n \to \infty$ and such that for every $u > 0$,
\begin{equation}
\label{e:anticluster}
  \lim_{m \to \infty} \limsup_{n \to \infty}
  \Pr \biggl( \max_{m \leq |i| \leq r_{n}} \| Z_{i} \| > ua_{n}\,\bigg|\,\| Z_{0}\|>ua_{n} \biggr) = 0.
\end{equation}
\end{cond}
It can be shown that Condition~\ref{c:mixcond1} holds for strongly mixing random sequences (see Krizmani\'{c}~\cite{Kr10},~\cite{Kr16}). Condition~\ref{c:mixcond2} assures that, roughly speaking, clusters of large values of $\|Z_{i}\|$ have finite mean size, and it is satisfied for $m$--dependent sequences and some other heavy-tailed models, such are stochastic volatility, ARCH and GARCH processes (see Bartkiewicz et al.~\cite{BaJaMiWi09} and Basrak et al.~\cite{BKS}). It also holds under Leadbetter's dependence condition $D'$:
\begin{equation}\label{e:D'cond}
 \lim_{k \to \infty} \limsup_{n \to \infty}~n \sum_{i=1}^{\lfloor n/k \rfloor} \Pr \bigg( \frac{\|Z_{0}\|}{a_{n}} > x, \frac{\|Z_{i}\|}{a_{n}} >x \bigg) = 0 \qquad \textrm{for all} \ x >0,
 \end{equation}
since it implies
\[
    \lim_{n \to \infty} n \sum_{i=1}^{r_{n}} \Pr \bigg( \frac{\|Z_{0}\|}{a_{n}} > u,
    \frac{\|Z_{i}\|}{a_{n}} > u \bigg) = 0 \quad \textrm{for all} \ u
    >0,
\]
for any sequence of positive integers $(r_{n})$ such that $r_{n} \to \infty$ and $r_{n} / n \to 0$ as $n \to \infty$. Note that the asymptotical independence condition (\ref{e:asyind}) holds under condition $D'$.

Therefore, under joint regular variation and Conditions~\ref{c:mixcond1} and~\ref{c:mixcond2}, by Theorem 3.1 in Basrak and Tafro~\cite{BaTa16}, as $n \to \infty$,
\begin{equation}\label{e:BaTa}
N_{n} \dto N = \sum_{i}\sum_{j}\delta_{(T_{i}, P_{i}\eta_{ij})}
\end{equation}
in $[0,1] \times \EE^{d}$, where
\begin{itemize}
  \item[(i)] $\sum_{i=1}^{\infty}\delta_{(T_{i}, P_{i})}$ is a Poisson process on $[0,1] \times (0,\infty)$
with intensity measure $Leb \times \nu$, where $\nu(\rmd x) = \theta \alpha
x^{-\alpha-1}1_{(0,\infty)}(x)\,\rmd x$, and $\theta = \Pr( \sup_{i \leq -1}\|Y_{i}\| \leq 1)$.
  \item[(ii)] $(\sum_{j= 1}^{\infty}\delta_{\eta_{ij}})_{i}$ is an i.i.d.~sequence of point processes in $\EE^{d}$ independent of $\sum_{i}\delta_{(T_{i}, P_{i})}$ and with common distribution equal to the distribution of the point process $\sum_{j}\delta_{\widetilde{Y}_{j}/L(\widetilde{Y})}$, where $L(\widetilde{Y})= \sup_{j \in \mathbb{Z}}\|\widetilde{Y}_{j}\|$ and $\sum_{j}\delta_{\widetilde{Y}_{j}}$ is distributed as $( \sum_{j \in \mathbb{Z}} \delta_{Y_j} \,|\, \sup_{i \le -1} \| Y_i\| \le 1).$
\end{itemize}
Taking into account the form of the tail process $(Y_{i})$ it holds that $\theta=1$ and $N=\sum_{i}\delta_{(T_{i}, P_{i}\eta_{i0})}$ with $\|\eta_{i0}\|=1$.  Hence, denoting $Q_{i}=\eta_{i0}$, the limiting point process in relation (\ref{e:BaTa}) reduces to
\begin{equation}\label{e:BaTa1}
 N = \sum_{i}\delta_{(T_{i}, P_{i}Q_{i})}.
\end{equation}
Since the sequence $(Q_{i})$ is independent of the Poisson process $\sum_{i=1}^{\infty}\delta_{(T_{i}, P_{i})}$, an application of Proposition 5.3 in Resnick~\cite{Resnick07} yields that $\sum_{i}\delta_{(T_{i},P_{i},Q_{i})}$ is a Poisson process on $[0,1] \times (0,\infty) \times \EE^{d}$ with intensity measure $Leb \times \nu \times F$, where $F$ is the common probability distribution of $Q_{i}$. Define $H \colon [0,1] \times (0,\infty) \times \EE^{d} \to [0,1] \times \EE^{d}$ by $H(x,y,z)=(x,yz)$. Then by Proposition 5.2 in Resnick~\cite{Resnick07} we have that $N = \sum_{i}\delta_{H(T_{i}, P_{i}, Q_{i})}$ is a Poisson process with intensity measure $Leb \times \widetilde{\nu}$, where $Leb \times \widetilde{\nu} = (Leb \times \nu \times F) \circ H^{-1}$. For $a=(a^{(1)}, \ldots, a^{(d)}), b=(b^{(1)}, \ldots, b^{(d)}) \in \EE^{d}$ such that $((a,b]]= (a^{(1)}, b^{(1)}] \times \ldots \times (a^{(d)}, b^{(d)}]$ is bounded away from zero, by Fubini's theorem we obtain
\begin{eqnarray*}
\widetilde{\nu}(((a,b]]) &=& (\nu \times F) (\{(y,z) \in (0,\infty) \times \EE^{d} : yz \in ((a,b]]\})\\[0.5em]
  &=&\int_{0}^{\infty} \int_{yz \in ((a,b]]}F(dz) \nu(dy) = \int_{0}^{\infty} \Pr(yQ_{1} \in ((a,b]])\,\alpha y^{-\alpha-1}\,\rmd y.
\end{eqnarray*}

For $x \in \mathbb{R}$ let $x^{+}=|x| 1_{\{x>0\}}$ and $x^{-}=|x| 1_{\{x < 0 \}}$.
Define the maximum functional
$\Phi \colon \mathbf{M}_{p}([0,1] \times \EE^{d}) \to D_{\uparrow}([0,1], \mathbb{R}^{2d^{2}})$
by
\begin{equation}\label{e:maxfuncdef}
 \Phi \Big(\sum_{i}\delta_{(t_{i}, (x_{i}^{(1)},\ldots, x_{i}^{(d)}))} \Big) (t)
  = \bigg( \Big(  \bigvee_{t_{i} \leq t}x_{i}^{(j)+},  \bigvee_{t_{i} \leq t} x_{i}^{(j)-} \Big)^{*}_{j=1,\ldots,d} \bigg)_{k=1,\ldots,d}
 \end{equation}
  for $t \in [0,1]$
(with the convention $\vee \emptyset = 0$), where the space $\mathbf{M}_p([0,1] \times \EE^{d})$ of Radon point
measures on $[0,1] \times \EE^{d}$ is equipped with the vague
topology (see Chapter 3 in Resnick~\cite{Re87}). Note that on the right-hand side in (\ref{e:maxfuncdef}) we repeat the $2d$ coordinates of the vector
$\Big(  \bigvee_{t_{i} \leq t}x_{i}^{(j)+},  \bigvee_{t_{i} \leq t} x_{i}^{(j)-} \Big)^{*}_{j=1,\ldots,d}$ consecutively $d$ times.
Let
\begin{multline*}
  \Lambda =  \{ \eta \in \mathbf{M}_{p}([0,1] \times \EE^{d}) :
   \eta ( \{0,1 \} \times \EE^{d}) = 0 \ \textrm{and} \\[0.2em]
   \eta ([0,1] \times \{ (x^{(1)},\ldots,x^{(d)}) : |x^{(i)}| = \infty \ \textrm{for some} \ i \}) = 0 \}.
\end{multline*}

\begin{lem}\label{l:contfunct}
The maximum functional $\Phi \colon \mathbf{M}_{p}([0,1] \times \EE^{d}) \to D_{\uparrow}([0,1], \mathbb{R}^{2d^{2}}) $ is continuous on the set
$\Lambda$,
when $D_{\uparrow}([0,1], \mathbb{R}^{2d^{2}})$ is endowed with the weak $M_{1}$ topology.
\end{lem}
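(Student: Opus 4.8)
The plan is to fix a sequence $\eta_{n} \to \eta$ in $\mathbf{M}_{p}([0,1] \times \EE^{d})$ (vague convergence) with $\eta \in \Lambda$, and to prove $\Phi(\eta_{n}) \to \Phi(\eta)$ in the weak $M_{1}$ topology. The first reduction exploits that the weak $M_{1}$ topology is exactly the product topology generated by the metric $d_{p}$ in (\ref{e:defdp}): it suffices to establish $d_{M_{1}}$--convergence of each of the $2d^{2}$ coordinate functions of $\Phi(\eta_{n})$ to the corresponding coordinate of $\Phi(\eta)$. Since the right-hand side of (\ref{e:maxfuncdef}) merely repeats the same block of $2d$ functions $d$ times, this amounts to proving, for each $j \in \{1, \ldots, d\}$, the one-dimensional $M_{1}$ convergence of the non-decreasing running maxima $t \mapsto \bigvee_{t_{i} \le t} x_{i}^{(j)+}$ and $t \mapsto \bigvee_{t_{i} \le t} x_{i}^{(j)-}$. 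As all these coordinate functions are non-decreasing, I would invoke the characterization (Corollary 12.5.1 in Whitt~\cite{Whitt02}) that for monotone functions $M_{1}$ convergence is equivalent to pointwise convergence on a dense subset of $[0,1]$ containing $0$ and $1$; thus it is enough to verify pointwise convergence at $0$, at $1$, and at every continuity point of the limit.

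The second ingredient is a truncation at a level $\eps > 0$. For $\eta = \sum_{i} \delta_{(t_{i}, x_{i})}$ let $\Phi^{>\eps}$ denote the analogue of $\Phi$ in which the maxima are taken only over those points with $\|x_{i}\| > \eps$. Because every point with $\|x_{i}\| \le \eps$ satisfies $x_{i}^{(j)+} \le \eps$ and $x_{i}^{(j)-} \le \eps$, each coordinate of $\Phi(\eta)$ and of $\Phi^{>\eps}(\eta)$ differ by at most $\eps$ uniformly in $t$, so that $\|\Phi(\eta) - \Phi^{>\eps}(\eta)\|_{[0,1]} \le \eps$; the identical bound holds with $\eta$ replaced by $\eta_{n}$. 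Since the $M_{1}$ metric is dominated by the uniform metric, the triangle inequality reduces the problem to showing that, for a fixed admissible $\eps$, $\Phi^{>\eps}(\eta_{n}) \to \Phi^{>\eps}(\eta)$ in weak $M_{1}$; one then sends $\eps \to 0$ at the very end.

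To treat the truncated functional, choose $\eps$ so that $\eta$ charges neither $\{0,1\} \times \EE^{d}$ (excluded because $\eta \in \Lambda$), the sphere $\{\|x\| = \eps\}$ (all but countably many $\eps$ work), nor the set of configurations with an infinite coordinate (again excluded by $\eta \in \Lambda$). Then the relatively compact set $[0,1] \times \{\|x\| > \eps\}$ has $\eta$--null boundary, so vague convergence guarantees that for large $n$ the points of $\eta_{n}$ lying in $\{\|x\| > \eps\}$ are finite in number, equal in count to the corresponding points of $\eta$, and convergent to the (finitely many, finite-valued) large points of $\eta$, which are located at times in $(0,1)$; in particular no point can escape to infinity. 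Continuity of $x \mapsto x^{(j)\pm}$ together with the fact that the index set $\{i : t_{i} \le t\}$ is stable for every $t$ different from the finitely many atom-times then yields pointwise convergence of each truncated running maximum at every continuity point of the limit, as well as at $t = 0$ (where both values are $0$, since no large point sits at time $0$) and at $t = 1$ (the global maximum over the converging large points). By the monotone characterization this gives $\Phi^{>\eps}(\eta_{n}) \to \Phi^{>\eps}(\eta)$ coordinatewise, hence in weak $M_{1}$.

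The main obstacle I anticipate is the careful management of the two limits. Everything hinges on converting vague convergence of point measures into genuine convergence of the finitely many dominant points, together with the uniform smallness of the truncation error, and then ordering the limits correctly: one shows $\limsup_{n} d_{p}(\Phi(\eta_{n}), \Phi(\eta)) \le 2\eps$ for every admissible $\eps$ and only afterwards lets $\eps \to 0$. It is precisely here that working with the weak rather than the standard $M_{1}$ topology is essential — the product structure permits each coordinate to be reparametrised independently, sidestepping the coordination of simultaneous jumps in distinct components that would otherwise obstruct convergence.
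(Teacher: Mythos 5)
Your proof is correct, and its skeleton matches the paper's: both first reduce weak $M_{1}$ convergence of $\Phi(\eta_{n})$ to $d_{M_{1}}$--convergence of each of the $2d^{2}$ (really only $2d$ distinct) monotone coordinate functions, using that the weak $M_{1}$ topology is metrized by the product metric $d_{p}$ (the paper cites Theorem 12.5.2 in Whitt~\cite{Whitt02} for this). Where you diverge is in the second step: the paper disposes of the one-dimensional coordinate convergence with a single citation to Proposition 3.1 in Krizmani\'{c}~\cite{Kr22-2}, whereas you prove that statement from scratch --- truncation at a level $\eps$ chosen so that $\eta$ does not charge $\{\|x\|=\eps\}$, the standard consequence of vague convergence that the finitely many points of $\eta_{n}$ in $[0,1]\times\{\|x\|>\eps\}$ match up with and converge to those of $\eta$ (here you correctly use both defining properties of $\Lambda$: no mass on $\{0,1\}\times\EE^{d}$ and no infinite coordinates), pointwise convergence of the truncated running maxima at non-atom times and at $0$ and $1$, Whitt's Corollary 12.5.1 for monotone functions, and finally the interchange $\limsup_{n} d_{p}\big(\Phi(\eta_{n}),\Phi(\eta)\big) \leq 2\eps$ before letting $\eps \to 0$. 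The one auxiliary fact you invoke for the triangle-inequality step, namely that $d_{M_{1}}$ is dominated by the uniform metric on $D_{\uparrow}([0,1],\mathbb{R})$, is legitimate and is in fact used verbatim by the paper itself in the proof of Theorem~\ref{t:infFLT}. In effect you have reconstructed inline the proof of the external proposition on which the paper leans: the paper's route buys brevity, while yours buys a self-contained argument that makes explicit exactly where the definition of $\Lambda$, the Radon property of the limiting point measure, and the monotonicity of the coordinates enter, and why the product (weak) rather than the strong $M_{1}$ topology is the natural one for the vector-valued functional.
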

\begin{proof}
Take an arbitrary $\eta \in \Lambda$ and suppose that $\eta_{n} \vto \eta$ as $n \to \infty$ in $\mathbf{M}_p([0,1] \times
\EE^{d})$. We need to show that
$\Phi(\eta_n) \to \Phi(\eta)$ in $D_{\uparrow}([0,1], \mathbb{R}^{2d^{2}})$ according to the weak $M_1$ topology. By
Theorem 12.5.2 in Whitt~\cite{Whitt02}, it suffices to prove that,
as $n \to \infty$,
$$ d_{p}(\Phi(\eta_{n}), \Phi(\eta)) =
\max_{k=1,\ldots,2d^{2}}d_{M_{1}}(\Phi^{(k)}(\eta_{n}),
\Phi^{(k)}(\eta)) \to 0.$$
By Proposition 3.1 in Krizmani\'{c}~\cite{Kr22-2} it holds that, for every $k=1,\ldots,2d^{2}$,
$$d_{M_{1}}(\Phi^{(k)}(\eta_{n}),
\Phi^{(k)}(\eta)) \to 0 \qquad \textrm{as} \ n \to \infty,$$
and therefore we conclude that $\Phi$ is continuous at $\eta$.
\end{proof}

\section{Finite order linear processes}
\label{S:FiniteMA}

Let $(Z_{i})_{i \in \mathbb{Z}}$ be a strictly stationary sequence of regularly varying $\mathbb{R}^{d}$--valued random vectors with index $\alpha>0$. Fix $m \in \mathbb{N}$, and let
$$ X_{i} = \sum_{j=0}^{m}C_{j}Z_{i-j}, \qquad i \in \mathbb{Z},$$
be a finite order linear process, where $C_{0}, C_{1}, \ldots, C_{m}$ are
random $d \times d$ matrices
 independent of $(Z_{i})$.
Define the corresponding partial maxima process by
\be\label{eq:defWn}
M_{n}(t) = \left\{ \begin{array}{cc}
                                   \displaystyle \frac{1}{a_{n}} \bigvee_{i=1}^{\floor {nt}}X_{i} = \bigg( \frac{1}{a_{n}} \bigvee_{i=1}^{\floor {nt}}X_{i}^{(k)} \bigg)_{k=1,\ldots,d} , & \quad  \displaystyle t \geq \frac{1}{n},\\[1.4em]
                                   \displaystyle \frac{X_{1}}{a_{n}} = \frac{1}{a_{n}}(X_{1}^{(1)}, \ldots, X_{1}^{(d)}) , & \quad \displaystyle  t < \frac{1}{n},
                                 \end{array}\right.
\ee
for $t \in [0,1]$,
with the normalizing sequence $(a_n)$ as in~\eqref{eq:niz}.
For $k,j \in \{1,\ldots,d\}$ let
\begin{equation}\label{e:Cplusminus}
D^{k,j}_{+}=  \bigvee_{i=0}^{m}C_{i;k,j}^{+}  \qquad \textrm{and}
\qquad D^{k,j}_{-}= \bigvee_{i=0}^{m}C_{i;k,j}^{-},
\end{equation}
where $C_{i;k,j}$ is the $(k,j)$th entry of the matrix $C_{i}$, $C_{i;k,j}^{+}=|C_{i;k,j}|1_{\{ C_{i;k,j}>0 \}}$ and $C_{i;k,j}^{-}=|C_{i;k,j}|1_{\{ C_{i;k,j}<0 \}}$.

First we show in the proposition below that a particular maxima process $W_{n}$, constructed from the sequence $(Z_{i})$, converges in $ D_{\uparrow}([0,1], \mathbb{R}^{d})$ with the weak $M_{1}$ topology. Later, in the main result of this section, we will show that the weak $M_{1}$ distance between processes $M_{n}$ and $W_{n}$ is asymptotically negligible (as $n$ tends to infinity), which will imply the functional convergence of the maxima process $M_{n}$. The limiting process will be described in terms of certain extremal processes derived from the point process $N=\sum_{i}\delta_{(T_{i}, P_{i}Q_{i})}$ in relation $(\ref{e:BaTa1})$. Extremal processes can be derived from Poisson processes in the following way. Let $\xi = \sum_{k}\delta_{(t_{k}, j_{k})}$ be a Poisson process on $[0,\infty) \times [0,\infty)^{d}$ with mean measure $Leb \times \mu$, where $\mu$ is a measure on $[0,\infty)^{d}$ satisfying
$$\mu (\{ x \in [0,\infty)^{d} : \|x\| > \delta \}) < \infty$$
for any $\delta >0$. The extremal process $G(\,\cdot\,)$ generated by $\xi$ is defined by
$$ G(t) = \bigvee_{t_{k} \leq t}j_{k}, \qquad t>0.$$
Then for $x \in [0,\infty)^{d}$, $x \neq 0$, and $t>0$ it holds that
$$ \Pr (G(t) \leq x) = e^{-t \mu([[0,x]]^{c})}$$
(cf.~Resnick~\cite{Resnick07}, Section 5.6). The measure $\mu$ is called the exponent measure.

\begin{prop}\label{p:FLT}
Let $(X_{i})$ be a linear process defined by
$$ X_{i} = \sum_{j=0}^{m}C_{j}Z_{i-j}, \qquad i \in \mathbb{Z},$$
where $(Z_{i})_{i \in \mathbb{Z}}$ is a strictly stationary sequence of regularly varying $\mathbb{R}^{d}$--valued random vectors with index $\alpha >0$ that satisfy $(\ref{e:asyind})$ and $(\ref{e:asyindcomp})$, and
 $C_{0}, C_{1}, \ldots, C_{m}$ are random $d\times d$ matrices independent of $(Z_{i})$. Assume Conditions~\ref{c:mixcond1} and~\ref{c:mixcond2} hold with the same sequence $(r_{n})$.
Let $$ W_{n}(t) := \bigg( \bigvee_{i=1}^{\floor {nt}} \bigvee_{j=1}^{d} \frac{1}{a_{n}} \Big(D^{k,j}_{+}Z_{i}^{(j)+} + D^{k,j}_{-}Z_{i}^{(j)-}\Big) \bigg)_{k=1,\ldots,d}, \qquad t \in [0,1],$$
with $D^{k,j}_{+}$ and $D^{k,j}_{-}$ defined in $(\ref{e:Cplusminus})$.
Then, as $n \to \infty$,
\begin{equation}\label{e:pomkonv}
 W_{n}(\,\cdot\,) \dto \bigg( \bigvee_{j=1}^{d} \Big( \widetilde{D}^{k,j}_{+}M^{(j+)}(\,\cdot\,) \vee \widetilde{D}^{k,j}_{-}M^{(j-)}(\,\cdot\,) \Big) \bigg)_{k=1,\ldots,d}
\end{equation}
 in $D_{\uparrow}([0,1], \mathbb{R}^{d})$ with the weak $M_{1}$ topology,
where $M^{(j+)}$ and $M^{(j-)}$ are extremal processes with exponent measures $\nu_{j+}$ and $\nu_{j-}$ respectively, with
$$  \nu_{j+}(\rmd x) = \mathrm{E}(Q_{1}^{(j)+})^{\alpha} \, \alpha x^{-\alpha-1}\, \rmd x
\qquad \textrm{and} \qquad  \nu_{j-}(\rmd x) =  \mathrm{E}(Q_{1}^{(j)-})^{\alpha} \, \alpha x^{-\alpha-1} \, \rmd x$$
for $x>0$ $(j=1,\ldots,d)$,
and $((\widetilde{D}^{k,j}_{+}, \widetilde{D}^{k,j}_{-})^{*}_{j =1, \ldots, d})_{k=1,\ldots,d}$ is a $2d^{2}$--dimensional random vector, independent of $(M^{(j+)}, M^{(j-)})_{j=1,\ldots,d}$, such that
$$((\widetilde{D}^{k,j}_{+}, \widetilde{D}^{k,j}_{-})^{*}_{j =1, \ldots, d})_{k=1,\ldots,d} \eind ((D^{k,j}_{+}, D^{k,j}_{-})^{*}_{j =1, \ldots, d})_{k=1,\ldots,d}.$$
\end{prop}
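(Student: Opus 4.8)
The plan is to realize $W_{n}$ as the image of the point process $N_{n}$ under two successive continuous maps and then invoke the continuous mapping theorem together with the point process convergence $N_{n}\dto N$ from $(\ref{e:BaTa1})$. First I would record an exact pathwise identity. Since for fixed $i,j$ at most one of $Z_{i}^{(j)+}$, $Z_{i}^{(j)-}$ is strictly positive and the coefficients $D^{k,j}_{+},D^{k,j}_{-}$ are nonnegative and do not depend on $i$, the inner term collapses to a maximum, $D^{k,j}_{+}Z_{i}^{(j)+}+D^{k,j}_{-}Z_{i}^{(j)-}=D^{k,j}_{+}Z_{i}^{(j)+}\vee D^{k,j}_{-}Z_{i}^{(j)-}$, and pulling the constants through the maxima over $i$ gives
\begin{equation*}
 W_{n}^{(k)}(t)=\bigvee_{j=1}^{d}\Big(D^{k,j}_{+}\,\bigvee_{i\le \lfloor nt\rfloor}\tfrac{Z_{i}^{(j)+}}{a_{n}}\ \vee\ D^{k,j}_{-}\,\bigvee_{i\le \lfloor nt\rfloor}\tfrac{Z_{i}^{(j)-}}{a_{n}}\Big).
\end{equation*}
The two inner running maxima are precisely the $(j+)$ and $(j-)$ coordinates of $\Phi(N_{n})$ defined in $(\ref{e:maxfuncdef})$ (repeated across the $d$ blocks $k=1,\dots,d$), so $W_{n}=g(\Phi(N_{n}),D)$, where $D=((D^{k,j}_{+},D^{k,j}_{-})^{*}_{j})_{k}$ and $g\colon D_{\uparrow}([0,1],\RR^{2d^{2}})\times\RR^{2d^{2}}\to D_{\uparrow}([0,1],\RR^{d})$ sends $(x,c)$ to the $d$--vector whose $k$th coordinate is $\bigvee_{j}(c^{k,j}_{+}x^{(k,j+)}\vee c^{k,j}_{-}x^{(k,j-)})$.

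Next I would run the continuous mapping theorem twice. For the first map, Lemma~\ref{l:contfunct} gives continuity of $\Phi$ on $\Lambda$, so I would check $N\in\Lambda$ almost surely: the times $T_{i}$ are atoms of a Poisson process on $[0,1]$ with Lebesgue time--intensity, so evaluating the intensity of $\{0,1\}\times\{\|x\|>c\}$ (which vanishes) and letting $c\downarrow 0$ shows almost surely no $T_{i}$ lies in $\{0,1\}$, while $\|P_{i}Q_{i}\|=P_{i}<\infty$ forces every coordinate finite, so no atom sits on $\EE^{d}\setminus\RR^{d}$. Hence $\Phi(N_{n})\dto\Phi(N)$ in the weak $M_{1}$ topology. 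For the second map I would note that $\Phi(N_{n})$ is a functional of $(Z_{i})$ only while $D$ is a functional of $(C_{i})$ only, and these are independent; since $D$ has a law not depending on $n$, independence upgrades the marginal convergence to the joint convergence $(\Phi(N_{n}),D)\dto(\Phi(N),\widetilde D)$ with $\widetilde D\eind D$ and $\widetilde D$ independent of $\Phi(N)$.

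It then remains to verify that $g$ is continuous at every point $(x,c)$ with nonnegative monotone coordinate functions and nonnegative $c$ — a full-measure set for the limit law, since the coordinates of $\Phi(N)$ are running maxima of nonnegative quantities and $D^{k,j}_{\pm}\ge 0$. This is where the two continuity lemmas enter: multiplying a coordinate function by a convergent sequence of nonnegative constants preserves $M_{1}$ convergence by Lemma~\ref{l:contmultpl} (take the second factor to be the constant function, whose discontinuity set is empty, so the sign hypothesis is vacuous), and taking coordinatewise maxima preserves weak $M_{1}$ convergence by Lemma~\ref{l:contmax} and its multivariate extension; composing these and reading convergence off through the product metric $d_{p}$ from $(\ref{e:defdp})$ yields continuity of $g$. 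The continuous mapping theorem then gives $W_{n}=g(\Phi(N_{n}),D)\dto g(\Phi(N),\widetilde D)$, which is exactly the right--hand side of $(\ref{e:pomkonv})$ once I set $M^{(j+)}(t)=\bigvee_{T_{i}\le t}P_{i}Q_{i}^{(j)+}$ and $M^{(j-)}(t)=\bigvee_{T_{i}\le t}P_{i}Q_{i}^{(j)-}$.

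Finally I would identify the laws of these factors. Mapping the Poisson process $\sum_{i}\delta_{(T_{i},P_{i},Q_{i})}$ with intensity $Leb\times\nu\times F$ under $(t,p,q)\mapsto(t,pq^{(j)+})$ produces, by the Poisson mapping theorem, a Poisson process on $[0,1]\times(0,\infty)$ whose second--coordinate intensity has tail $\int_{0}^{\infty}\Pr(pQ_{1}^{(j)+}>x)\,\alpha p^{-\alpha-1}\,\rmd p=x^{-\alpha}\,\E(Q_{1}^{(j)+})^{\alpha}$ (Fubini and a change of variables), so $M^{(j+)}$ is the extremal process with exponent measure $\nu_{j+}$, and symmetrically for $M^{(j-)}$. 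I expect the main obstacle to be the continuity of the composite map $g$ together with the passage to an independent limiting coefficient vector $\widetilde D$: one must check carefully that scalar multiplication by randomly scaled constants is $M_{1}$--continuous on the relevant nonnegative cone and that this continuity survives the coordinatewise maxima in the weak $M_{1}$ topology.
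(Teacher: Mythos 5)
Your proposal is correct and follows essentially the same route as the paper: point process convergence from (\ref{e:BaTa1}), continuity of $\Phi$ on $\Lambda$ (Lemma~\ref{l:contfunct}), joint convergence with an independent copy of the coefficient vector, then $M_{1}$--continuity of scaling (Lemma~\ref{l:contmultpl}) and coordinatewise maxima (Lemma~\ref{l:contmax}), and finally Resnick's Poisson mapping propositions to identify $\nu_{j\pm}$. The only cosmetic differences are that you keep the coefficients as a Euclidean vector rather than embedding them as constant paths in $D_{\uparrow}([0,1],\mathbb{R}^{2d^{2}})$ (the paper uses Kallenberg's Corollary 5.18 and Theorem 3.29, relying on Polishness, where you argue independence directly), and that you state the pathwise identity $D^{k,j}_{+}Z_{i}^{(j)+}+D^{k,j}_{-}Z_{i}^{(j)-}=D^{k,j}_{+}Z_{i}^{(j)+}\vee D^{k,j}_{-}Z_{i}^{(j)-}$ at the outset rather than at the end.
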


\begin{rem}
In Proposition~\ref{p:FLT}, as well as in the sequel of this paper, we suppose $M^{(j+)}$ is an extremal process if $\mathrm{E}(Q_{1}^{(j)+})^{\alpha}>0$, and a zero process if this quantity is equal to zero. Analogously for $M^{(j-)}$.
\end{rem}

\begin{proof}[Proof of Proposition~\ref{p:FLT}]
By induction form Proposition~\ref{p:jrvai} it follows that the sequence $(Z_{i})$ is jointly regularly varying with index $\alpha$. This, with Conditions~\ref{c:mixcond1} and~\ref{c:mixcond2}, imply the point process convergence in (\ref{e:BaTa}) with the limiting point process $N$ described in (\ref{e:BaTa1}). Since $N$ is a Poisson process, it almost surely belongs to the set $\Lambda$. Therefore, since by Lemma~\ref{l:contfunct} the maximum functional $\Phi$ is continuous on $\Lambda$, the continuous mapping theorem (see for instance Theorem 3.1 in Resnick~\cite{Resnick07}) applied to the convergence in (\ref{e:BaTa}) yields $\Phi(N_{n}) \dto \Phi(N)$ in $D_{\uparrow}([0,1], \mathbb{R}^{2d^{2}})$ under the weak $M_{1}$ topology, i.e.
\begin{eqnarray}\label{e:conv11}
 \nonumber W_{n}^{\star}(\,\cdot\,)  :=  \bigg( \Big(  \bigvee_{i=1}^{\floor{n\,\cdot}}\frac{Z_{i}^{(j)+}}{a_{n}},  \bigvee_{i=1}^{\floor{n\,\cdot} } \frac{Z_{i}^{(j)-}}{a_{n}} \Big)^{*}_{j=1,\ldots,d} \bigg)_{k=1,\ldots,d}&&\\[0.6em]
  & \hspace*{-40em} \dto & \hspace*{-20em} W(\,\cdot\,) :=
 \bigg( \Big(  \bigvee_{T_{i} \leq\,\cdot}P_{i}Q_{i}^{(j)+},  \bigvee_{T_{i} \leq\,\cdot} P_{i}Q_{i}^{(j)-} \Big)^{*}_{j=1,\ldots,d} \bigg)_{k=1,\ldots,d}
\end{eqnarray}
in $D_{\uparrow}([0,1], \mathbb{R}^{2d^{2}})$ under the weak $M_{1}$ topology.

The space $D([0,1],\mathbb{R})$ equipped with the Skorokhod $J_{1}$ topology is a Polish space, i.e.~metrizable as a complete separable metric space (see Billingsley~\cite{Bi68}, Section 14), and thus the same holds for the $M_{1}$ topology, since it is topologically complete (see Whitt~\cite{Whitt02}, Section 12.8) and separability remains preserved in the weaker topology. The space $D_{\uparrow}([0,1], \mathbb{R})$ is a closed subspace of $D([0,1], \mathbb{R})$ (cf.~Lemma 13.2.3 in Whitt~\cite{Whitt02}), and hence also Polish. The space $D_{\uparrow}([0,1], \mathbb{R}^{2d^{2}})$ equipped with the weak $M_{1}$ topology is separable as a direct product of $2d^{2}$ separable topological spaces, and also topologically complete since the product metric in (\ref{e:defdp}) inherits the completeness of the component metrics. Therefore $D_{\uparrow}([0,1], \mathbb{R}^{2d^{2}})$ with the weak $M_{1}$ topology is also a Polish space,
and hence by Corollary 5.18 in Kallenberg~\cite{Ka97} we can find a random vector $((\widetilde{D}^{k,j}_{+}, \widetilde{D}^{k,j}_{-})^{*}_{j =1, \ldots, d})_{k=1,\ldots,d}$, independent of $W$, such that
\begin{equation}\label{e:eqdistrC}
((\widetilde{D}^{k,j}_{+}, \widetilde{D}^{k,j}_{-})^{*}_{j =1, \ldots, d})_{k=1,\ldots,d} \eind ((D^{k,j}_{+}, D^{k,j}_{-})^{*}_{j =1, \ldots, d})_{k=1,\ldots,d}.
\end{equation}
This, relation (\ref{e:conv11}) and the fact that $((D^{k,j}_{+}, D^{k,j}_{-})^{*}_{j =1, \ldots, d})_{k=1,\ldots,d}$ is independent of $W_{n}^{\star}$, by an application of Theorem 3.29 in Kallenberg~\cite{Ka97}, imply that
  \begin{equation}\label{e:zajedkonvK}
   (B, W_{n}^{\star}) \dto (\widetilde{B}, W) \qquad \textrm{as} \ n \to \infty,
  \end{equation}
  in $D_{\uparrow}([0,1], \mathbb{R}^{4d^{2}})$ with the product $M_{1}$ topology, where $B= ((B^{k,j}_{+}, B^{k,j}_{-})^{*}_{j =1, \ldots, d})_{k=1,\ldots,d}$ and $\widetilde{B} = ((\widetilde{B}^{k,j}_{+}, \widetilde{B}^{k,j}_{-})^{*}_{j =1, \ldots, d})_{k=1,\ldots,d}$ are random elements in $D_{\uparrow}([0,1], \mathbb{R}^{2d^{2}})$ such that $B^{k,j}_{+}(t)=D^{k,j}_{+}$, $B^{k,j}_{-}(t)=D^{k,j}_{-}$, $\widetilde{B}^{k,j}_{+}(t)=\widetilde{D}^{k,j}_{+}$ and $\widetilde{B}^{k,j}_{-}(t)=\widetilde{D}^{k,j}_{+}$ for $t \in [0,1]$. Note that the components of $B$ and $\widetilde{B}$ have constant sample paths.

Let $g \colon D_{\uparrow}([0,1], \mathbb{R}^{4d^{2}}) \to D_{\uparrow}([0,1], \mathbb{R}^{2d^{2}})$ be a function defined by
$$ g(x) = (x^{(1)}x^{(2d^{2}+1)}, x^{(2)}x^{(2d^{2}+2)}, \ldots, x^{(2d^{2})}x^{(4d^{2})})$$
 for $x=(x^{(1)},\ldots, x^{(4d^{2})}) \in D_{\uparrow}([0,1], \mathbb{R}^{4d^{2}})$.
Basically, for $x \in D_{\uparrow}([0,1], \mathbb{R}^{4d^{2}})$, $g(x)$ is an element in $D_{\uparrow}([0,1], \mathbb{R}^{2d^{2}})$ whose $k$--th component is the product of the $k$--th and $(2d^{2}+k)$th components of $x$ ($k=1,\ldots, 2d^{2})$.
 Denote by $A$ the set of all functions in $D_{\uparrow}([0,1], \mathbb{R}^{4d^{2}})$ for which the first $2d^{2}$ component functions have no discontinuity points, i.e.
$$ A = \{ x=(x^{(i)})_{i=1,\ldots,4d^{2}} \in D_{\uparrow}([0,1], \mathbb{R}^{4d^{2}}) : \textrm{Disc}(x^{(i)}) = \emptyset \ \textrm{for all} \ i=1,\ldots, 2d^{2} \}.$$
Lemma~\ref{l:contmultpl} implies the function $g$ is continuous on the set $A$ in the weak $M_{1}$ topology, and hence $\textrm{Disc}(g) \subseteq A^{c}$. Denoting
$A_{1} = \{ x \in D_{\uparrow}([0,1], \mathbb{R}) : \textrm{Disc}(x)= \emptyset \}$ we obtain
\begin{eqnarray*}
\Pr[ (\widetilde{B}, W) \in \textrm{Disc}(g) ] & \leq & \Pr [ (\widetilde{B}, W)  \in A^{c}] \leq \Pr \bigg[ \bigcup_{i=1}^{2d^{2}} \{ \widetilde{B}^{(i)} \in A_{1}^{c} \} \bigg]\\[0.1em]
 & \leq & \sum_{j,k=1}^{d} \Big( \Pr( \widetilde{B}^{k,j}_{+} \in A_{1}^{c}) + \Pr( \widetilde{B}^{k,j}_{-} \in A_{1}^{c}) \Big) = 0,
 \end{eqnarray*}
where the last equality holds since $\widetilde{B}^{k,j}_{+}$ and $\widetilde{B}^{k,j}_{-}$ have no discontinuity points. Therefore we can apply the continuous mapping theorem to the convergence in relation (\ref{e:zajedkonvK}), and this yields
$g(B, W_{n}^{\star}) \dto g(\widetilde{B}, W)$ as $n \to \infty$, i.e.
\begin{eqnarray}\label{e:zajedkonvK2}
  \nonumber \widetilde{W_{n}^{\star}}(\,\cdot\,)  := \bigg( \Big(  \bigvee_{i=1}^{\floor{n\,\cdot}}\frac{D^{k,j}_{+}Z_{j}^{(j)+}}{a_{n}},  \bigvee_{i=1}^{\floor{n\,\cdot}} \frac{D^{k,j}_{-}Z_{i}^{(j)-}}{a_{n}} \Big)^{*}_{j=1,\ldots,d} \bigg)_{k=1,\ldots,d}&& \\[0.5em]
 &   \hspace*{-51em} \dto & \hspace*{-25.5em} \widetilde{W} (\,\cdot\,) := \bigg( \Big(  \bigvee_{T_{i} \leq\,\cdot} \widetilde{D}^{k,j}_{+}P_{i}Q_{i}^{(j)+},  \bigvee_{T_{i} \leq\,\cdot} \widetilde{D}^{k,j}_{-}P_{i}Q_{i}^{(j)-} \Big)^{*}_{j=1,\ldots,d} \bigg)_{k=1,\ldots,d}
\end{eqnarray}
 in $D_{\uparrow}([0,1], \mathbb{R}^{2d^{2}})$ with the weak $M_{1}$ topology.
 Let $h \colon D_{\uparrow}([0,1], \mathbb{R}^{2d^{2}}) \to D_{\uparrow}([0,1], \mathbb{R}^{d})$ be a function defined by
$$ h(x) = \Big( \bigvee_{i=1}^{2d}x^{(i)}, \bigvee_{i=2d+1}^{4d}x^{(i)}, \ldots, \bigvee_{i=2(d-1)d+1}^{2d^{2}}x^{(i)} \Big), \quad x=(x^{(i)})_{i=1,\ldots,2d^{2}} \in D_{\uparrow}([0,1], \mathbb{R}^{2d^{2}}).$$
A multivariate version of Lemma~\ref{l:contmax} shows that $h$ is continuous when both spaces $D_{\uparrow}([0,1], \mathbb{R}^{2d^{2}})$ and $D_{\uparrow}([0,1], \mathbb{R}^{d})$ are endowed with the weak $M_{1}$ topology. Hence an application of the continuous mapping theorem to
the convergence in relation (\ref{e:zajedkonvK2}) yields
 $ h( \widetilde{W_{n}^{\star}}) \dto h(\widetilde{W})$ as $n \to \infty$, i.e.
 $$ \bigg( \bigvee_{i=1}^{\floor {n\,\cdot}} \bigvee_{j=1}^{d} \frac{D^{k,j}_{+}Z_{i}^{(j)+} \vee D^{k,j}_{-}Z_{i}^{(j)-}}{a_{n}} \bigg)_{k=1,\ldots,d}
 \dto \bigg( \bigvee_{T_{i} \leq\,\cdot}^{} \bigvee_{j=1}^{d} (\widetilde{D}^{k,j}_{+}P_{i}Q_{i}^{(j)+}\,\vee\,\widetilde{D}^{k,j}_{-}P_{i}Q_{i}^{(j)-}) \bigg)_{k=1,\ldots,d}$$
  in $D_{\uparrow}([0,1], \mathbb{R}^{d})$ with the weak $M_{1}$ topology. Note that $h( \widetilde{W_{n}^{\star}})$ is equal to $W_{n}$.
 To finish it remains to show that $h(\widetilde{W})$ is equal to the limiting process in relation (\ref{e:pomkonv}). By an application of Propositions 5.2 and 5.3 in Resnick~\cite{Resnick07} we obtain that for every $j=1,\ldots,d$ the point processes $\sum_{i}\delta_{(T_{i}, P_{i}Q_{i}^{(j)+})}$ and $\sum_{i}\delta_{(T_{i}, P_{i}Q_{i}^{(j)-})}$ are Poisson processes with intensity measures $\emph{Leb} \times \nu_{j+}$ and $\emph{Leb} \times \nu_{j-}$ respectively, where
$$  \nu_{j+}(\rmd x) = \mathrm{E}(Q_{1}^{(j)+})^{\alpha} \, \alpha x^{-\alpha-1}\, \rmd x
\qquad \textrm{and} \qquad  \nu_{j-}(\rmd x) =  \mathrm{E}(Q_{1}^{(j)-})^{\alpha} \, \alpha x^{-\alpha-1} \, \rmd x$$
for $x>0$. From this we conclude that the processes
$$ M^{(j+)}(\,\cdot\,) :=  \bigvee_{T_{i} \leq\,\cdot} P_{i}Q_{i}^{(j)+} \qquad \textrm{and} \qquad M^{(j-)}(\,\cdot\,) :=\bigvee_{T_{i} \leq\,\cdot} P_{i}Q_{i}^{(j)-}$$
are extremal processes with exponent measures $\nu_{j+}$ and $\nu_{j-}$ respectively (see Resnick~\cite{Re87}, Section 4.3; Resnick~\cite{Resnick07}, p.~161), and hence
$$ h(\widetilde{W}) = \bigg( \bigvee_{j=1}^{d} \Big(\widetilde{D}^{k,j}_{+}M^{(j+)} \vee \widetilde{D}^{k,j}_{-}M^{(j-)} \Big) \bigg)_{k=1,\ldots,d}, \qquad t \in [0,1].$$
\end{proof}

Denote by $M$ the limiting process in relation (\ref{e:pomkonv}), i.e.
\begin{equation}\label{e:limprocess}
 M(t) = \bigg( \bigvee_{j=1}^{d} \Big(\widetilde{D}^{k,j}_{+}M^{(j+)}(t) \vee \widetilde{D}^{k,j}_{-}M^{(j-)}(t) \Big) \bigg)_{k=1,\ldots,d}, \qquad t \in [0,1],
\end{equation}
where $M^{(j+)}$ and $M^{(j-)}$ are extremal processes with exponent measures $\nu_{j+}$ and $\nu_{j-}$ respectively, given by
$$  \nu_{j+}(\rmd x) = \mathrm{E}(Q_{1}^{(j)+})^{\alpha} \, \alpha x^{-\alpha-1}\, \rmd x
\qquad \textrm{and} \qquad  \nu_{j-}(\rmd x) =  \mathrm{E}(Q_{1}^{(j)-})^{\alpha} \, \alpha x^{-\alpha-1} \, \rmd x$$
for $x>0$ $(j=1,\ldots,d)$, with $\sum_{i}\delta_{(T_{i},P_{i}Q_{i})}$ being the Poisson process from (\ref{e:BaTa1}),
and $((\widetilde{D}^{k,j}_{+}, \widetilde{D}^{k,j}_{-})^{*}_{j =1, \ldots, d})_{k=1,\ldots,d}$ is a $2d^{2}$--dimensional random vector, independent of $(M^{(j+)}, M^{(j-)})_{j=1,\ldots,d}$, such that
$$((\widetilde{D}^{k,j}_{+}, \widetilde{D}^{k,j}_{-})^{*}_{j =1, \ldots, d})_{k=1,\ldots,d} \eind ((D^{k,j}_{+}, D^{k,j}_{-})^{*}_{j =1, \ldots, d})_{k=1,\ldots,d}.$$
Taking into account the proof of Proposition~\ref{p:FLT} observe that
$$M^{(j+)}(t) :=  \bigvee_{T_{i} \leq t} P_{i}Q_{i}^{(j)+} \quad \textrm{and} \quad M^{(j-)}(t) :=\bigvee_{T_{i} \leq t} P_{i}Q_{i}^{(j)-}, \qquad t \in [0,1].$$

The proof of the next theorem relies on the proof of Theorem 3.3 in Krizmani\'{c}~\cite{Kr22-2} where the functional convergence of the partial maxima process is established for univariate linear processes with i.i.d.~innovations and random coefficients. We will omit some details of those parts of the proof that remain the same in our case, but we will carefully write down those parts that differ due to the multivariate setting and weak dependence of innovations.

\begin{thm}\label{t:FLT}
 Let $(Z_{i})_{i \in \mathbb{Z}}$ be a strictly stationary sequence of regularly varying $\mathbb{R}^{d}$--valued random vectors with index $\alpha >0$ that satisfy $(\ref{e:asyind})$ and $(\ref{e:asyindcomp})$, and let
 $C_{0}, C_{1}, \ldots, C_{m}$ be random $d\times d$ matrices independent of $(Z_{i})$. Assume Conditions~\ref{c:mixcond1} and~\ref{c:mixcond2} hold with the same sequence $(r_{n})$.
Then, as $n \to \infty$,
$$ M_{n}(\,\cdot\,) \dto  M(\,\cdot\,)$$
in $D_{\uparrow}([0,1], \mathbb{R}^{d})$ endowed with the weak $M_{1}$ topology.
\end{thm}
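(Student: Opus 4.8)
The plan is to transfer the convergence $W_n \dto M$ of the auxiliary process, already obtained in Proposition~\ref{p:FLT}, to the process $M_n$ by showing that the two processes become indistinguishable in the weak $M_1$ topology as $n\to\infty$. Since we work in the metric space $(D_{\uparrow}([0,1],\mathbb{R}^{d}), d_{p})$, a standard converging-together argument (Billingsley~\cite{Bi68}) reduces the proof to establishing that
$$ d_{p}(M_n, W_n) \stp 0 \qquad \textrm{as} \ n \to \infty. $$
By the definition of $d_{p}$ in~\eqref{e:defdp} this is equivalent to $d_{M_{1}}(M_n^{(k)}, W_n^{(k)}) \stp 0$ for each coordinate $k=1,\ldots,d$, so I fix $k$ and work with real-valued, nondecreasing c\`{a}dl\`{a}g functions.

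Next I would truncate at level $\varepsilon a_{n}$, calling an index $i \in \{1,\ldots,n\}$ \emph{big} when $\|Z_i\| > \varepsilon a_n$ and \emph{small} otherwise. Relation~\eqref{e:nizrv} gives $\E[\#\{i \le n : \|Z_i\| > \varepsilon a_n\}] = n\Pr(\|Z_1\|>\varepsilon a_n) \to \varepsilon^{-\alpha}$, so the number of big indices is $O_{p}(1)$, while the asymptotic independence~\eqref{e:asyind} (a union bound over the $O(mn)$ pairs at distance $\le m$, each of probability $o(\Pr(\|Z_1\|>\varepsilon a_n))$) shows that with probability tending to one no two big indices lie within distance $m$; the anticlustering Condition~\ref{c:mixcond2} reinforces this separation. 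Condition~\eqref{e:asyindcomp} further guarantees that a big $Z_{i}$ has, up to a negligible error, a single large component $Z_i^{(j_0)}$. Splitting each innovation into its big and small parts and using the finite-order structure together with the operator-norm bound $\sum_{l}|C_{j;k,l}|\le\|C_j\|$, the small parts contribute to $X_i^{(k)}$ at most $\varepsilon a_n \sum_{j=0}^{m}\|C_j\|$ uniformly in $i$. Hence, writing $\widetilde M_n^{(k)}(t) = a_n^{-1}\bigvee_{i \le \lfloor nt\rfloor} B_i$ with $B_i$ the part of $X_i^{(k)}$ coming from big innovations, we get $\|\widetilde M_n^{(k)} - M_n^{(k)}\|_{[0,1]} \le \varepsilon\sum_{j=0}^{m}\|C_j\| = O_p(\varepsilon)$, and since the uniform distance dominates $d_{M_1}$ we obtain $d_{M_1}(\widetilde M_n^{(k)}, M_n^{(k)}) = O_p(\varepsilon)$.

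It then remains to compare $\widetilde M_n^{(k)}$ with $W_n^{(k)}$. Around an isolated big index $i_0$ with large component $Z_{i_0}^{(j_0)}$, the big part of $X_{i_0+l}^{(k)}$ equals $C_{l;k,j_0}Z_{i_0}^{(j_0)}$, up to the $o(a_n)$ error from the other components allowed by~\eqref{e:asyindcomp}, for $l=0,\ldots,m$; thus as $t$ sweeps the window $[i_0/n,(i_0+m)/n]$ the running maximum $\widetilde M_n^{(k)}$ climbs, in a staircase of at most $m+1$ upward steps, from its previous level to that level maximized with $\bigvee_{l=0}^{m}C_{l;k,j_0}Z_{i_0}^{(j_0)} = D_{+}^{k,j_0}Z_{i_0}^{(j_0)+} + D_{-}^{k,j_0}Z_{i_0}^{(j_0)-}$. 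The process $W_n^{(k)}$ makes exactly this jump, but at the single time $i_0/n$. Because the $M_1$ topology lets finitely many same-sign jumps in a shrinking time window coalesce into one, and both functions are monotone (so their completed graphs are close), a time reparametrization of order $m/n \to 0$ absorbs the lag and the staircase, after which the two functions differ only by the $O_p(\varepsilon)$ spatial errors of the small innovations and the secondary components; the discrepancy near $t=0$ from the definition of $M_n$ on $[0,1/n)$ is negligible since $X_1/a_n \stas 0$. Combining the three estimates, $\limsup_{n}\Pr(d_{M_1}(M_n^{(k)}, W_n^{(k)}) > \eta)$ is bounded by a quantity tending to $0$ as $\varepsilon \to 0$, and letting first $n\to\infty$ and then $\varepsilon\to 0$ yields $d_{M_1}(M_n^{(k)}, W_n^{(k)}) \stp 0$.

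I expect the main obstacle to be the rigorous $M_1$ estimate of the previous paragraph: one must exhibit explicit (weak) parametric representations of the two monotone coordinate functions that simultaneously absorb the $m/n$ time lag, let each staircase coalesce into the single jump of $W_n^{(k)}$, and keep the spatial deviation controlled by the small-value and secondary-component contributions uniformly over all (random, but boundedly many) big indices and over the whole interval, endpoints included. This is precisely where the multivariate bookkeeping of components $j$ and signs through $D_{\pm}^{k,j}$ enters, and where the i.i.d.\ reasoning of~\cite{Kr22-2} must be replaced by the asymptotic-independence conditions~\eqref{e:asyind},~\eqref{e:asyindcomp} and the anticlustering Condition~\ref{c:mixcond2} to guarantee the separation of large values; these are the parts that genuinely differ from the univariate i.i.d.\ case.
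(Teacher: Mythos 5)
Your skeleton coincides with the paper's: invoke Proposition~\ref{p:FLT} for $W_n \dto M$, reduce via Slutsky/converging-together and the metric $d_p$ of (\ref{e:defdp}) to the coordinatewise statement $d_{M_1}(M_n^{(1)}, W_n^{(1)}) \stp 0$, and then exploit that large innovations are isolated (by (\ref{e:asyind})), have a single large component (by (\ref{e:asyindcomp})), and that small innovations contribute $O_p(\varepsilon)$ uniformly. However, the decisive step --- the actual proof that the staircase of $M_n^{(1)}$ over a window of length $m/n$ and the single jump of $W_n^{(1)}$ are within a vanishing $M_1$ distance, uniformly over all big indices and including the degenerate configurations --- is precisely what you defer as ``the main obstacle,'' and it is the entire technical content of the theorem; everything you do establish is preparatory. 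Note also that explicit parametric representations are not the only (nor the paper's) route: since both processes are nondecreasing, the paper uses the identity $d_{M_2}(x,y) = d_{M_1}^{*}(x,y)$ for $x,y \in D_{\uparrow}([0,1],\mathbb{R})$, with $d_{M_1}^{*}$ topologically equivalent to $d_{M_1}$ (Whitt, Remark 12.8.1), so that it suffices to bound the Hausdorff distance $d_{M_2}$ between completed graphs; it then argues by contradiction that a graph distance exceeding $\delta$ forces one of four explicit events $H_{n,1},\ldots,H_{n,4}$ (a large innovation in the boundary windows; two large innovations within lag $m$; two large components of one innovation; a specific two-large-values configuration), each of probability tending to $0$ by stationarity, (\ref{e:nizrv}), (\ref{e:asyind}) and (\ref{e:asyindcomp}). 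Without either that argument or the representations you postpone, the proof is incomplete.

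Two further points would have to be repaired even within your plan. First, the identity you use for the staircase's terminal level, $\bigvee_{l=0}^{m}C_{l;k,j_0}Z_{i_0}^{(j_0)} = D_{+}^{k,j_0}Z_{i_0}^{(j_0)+} + D_{-}^{k,j_0}Z_{i_0}^{(j_0)-}$, is false when the left-hand side is negative (e.g.\ all $C_{l;k,j_0}<0$ and $Z_{i_0}^{(j_0)}>0$): the right-hand side equals the positive part of the left-hand side. This is not cosmetic, because $W_n^{(k)} \geq 0$ identically while $M_n^{(k)}$ can be negative, and reconciling the two processes in exactly these configurations (and before any large value has occurred) is one of the cases the paper must treat separately (cf.\ (\ref{e:phipm3}) and cases (A1)--(A2) in its proof). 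Second, the innovations $Z_{1-m},\ldots,Z_{0}$ enter $X_{1},\ldots,X_{m}$ but are never indexed by $W_n$; a large value among them makes $M_n^{(1)}$ jump near $t \approx m/n$ with no matching jump of $W_n^{(1)}$, so this boundary effect needs its own union bound (the paper's event $H_{n,1}$) --- the observation that $X_1/a_n \to 0$ a.s.\ only disposes of the interval $[0,1/n)$, not of the window $[0,m/n]$.
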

\begin{proof}
Let $W_{n}$ be as defined in Proposition~\ref{p:FLT}.
If we show that for every $\delta >0$,
$$ \lim_{n \to \infty} \Pr[d_{p}(W_{n}, M_{n}) > \delta]=0,$$
then from Proposition~\ref{p:FLT} by an application of Slutsky's theorem (see for instance Theorem 3.4 in Resnick~\cite{Resnick07}) it will follow $M_{n} \dto M$ as $n \to \infty$
in $D_{\uparrow}([0,1], \mathbb{R}^{d})$ with the weak $M_{1}$ topology.
Since
$$ \Pr[d_{p}(W_{n}, M_{n}) > \delta] \leq \sum_{j=1}^{d} \Pr [d_{M_{1}}(W_{n}^{(j)}, M_{n}^{(j)}) > \delta],$$
we need to show
$$ \lim_{n \to \infty} \Pr[d_{M_{1}}(W_{n}^{(j)}, M_{n}^{(j)}) > \delta]=0,$$
for every $j=1,\ldots,d$, but it is enough to prove the last relation only for $j=1$ (since the proof is analogous for all coordinates $j$). In fact
it suffices to show that
\begin{equation}\label{e:max1}
\lim_{n \to \infty} \Pr[d_{M_{2}}(W_{n}^{(1)}, M_{n}^{(1)}) > \delta]=0,
\end{equation}
since for $x, y \in D_{\uparrow}([0,1], \mathbb{R})$ it holds that $d_{M_{2}}(x, y)= d_{M_{1}}^{*}(x, y)$, where $d_{M_{1}}^{*}$ is a complete metric topologically equivalent to $d_{M_{1}}$ (see Remark 12.8.1 in Whitt~\cite{Whitt02}). Generally,
$$ d_{M_{1}}^{*}(x, y) = d_{M_{2}}(x, y) + \lambda (\widehat{\omega}(x,\cdot), \widehat{\omega}(y,\cdot)),$$
where $\lambda$ is the L\'{e}vy metric on a space of distributions
$$ \lambda (F_{1},F_{2}) = \inf \{ \epsilon >0 : F_{2}(z-\epsilon) - \epsilon \leq F_{1}(z) \leq F_{2}(z+\epsilon) + \epsilon \ \ \textrm{for all} \ z \}$$
and
$$ \widehat{\omega}(z,s) = \left\{ \begin{array}{cc}
                                   \omega(z,e^{s}), & \quad s<0,\\[0.4em]
                                   \omega(z,1), & \quad s \geq 0,
                                 \end{array}\right.$$
with
$ \omega (z,\rho) = \sup_{0 \leq t \leq 1} \omega (z, t, \rho)$ and
\begin{equation*}\label{e:oscillationf}
\omega(z,t,\rho) = \sup_{0 \vee (t-\rho) \leq t_{1} < t_{2} < t_{3} \leq (t+\rho) \wedge 1} ||z(t_{2}) - [z(t_{1}), z(t_{3})]||
\end{equation*}
where $\rho>0$ and $\|c-A\|$ denotes the distance between a point $c$ and a subset $A \subseteq \mathbb{R}$.
Since the processes $W_{n}^{(1)}$ and $M_{n}^{(1)}$ are nondecreasing, for $t_{1} < t_{2} < t_{3}$ it holds that
$ \|W_{n}^{(1)}(t_{2}) - [W_{n}^{(1)}(t_{1}), W_{n}^{(1)}(t_{3})] \|=0$, which implies $\omega(W_{n}^{(1)}, \rho)=0$ for all $\rho>0$, and similarly $\omega(M_{n}^{(1)}, \rho)=0$. Hence $\lambda (W_{n}^{(1)}, M_{n}^{(1)})=0$, and $d_{M_{1}}^{*}(W_{n}^{(1)}, M_{n}^{(1)}) = d_{M_{2}}(W_{n}^{(1)}, M_{n}^{(1)})$.

In order to show (\ref{e:max1}) fix $\delta >0$ and let $n \in \mathbb{N}$ be large enough, i.e.~$n > \max\{2m, 2m/\delta \}$.
By the definition of the metric $d_{M_{2}}$ we have
\begin{equation*}
  d_{M_{2}}(W_{n}^{(1)}, M_{n}^{(1)}) = \bigg(\sup_{v \in \Gamma_{W_{n}^{(1)}}} \inf_{z \in \Gamma_{ M_{n}^{(1)}}} d(v,z) \bigg) \vee \bigg(\sup_{v \in \Gamma_{ M_{n}^{(1)}}} \inf_{z \in \Gamma_{W_{n}^{(1)}}} d(v,z) \bigg)  = : R_{n} \vee T_{n}.
\end{equation*}
Hence
\be\label{eq:AB}
\Pr [d_{M_{2}}(W_{n}^{(1)}, M_{n}^{(1)})> \delta ] \leq \Pr(R_{n}>\delta) + \Pr(T_{n}>\delta).
\ee
In order to estimate the first term on the right-hand side of (\ref{eq:AB}) define
$$ D_{n} = \{\exists\,v \in \Gamma_{W_{n}^{(1)}} \ \textrm{such that} \ d(v,z) > \delta \ \textrm{for every} \ z \in \Gamma_{ M_{n}^{(1)}} \},$$
and note that by the definition of $R_{n}$
\begin{equation}\label{e:Yn}
 \{R_{n} > \delta\} \subseteq D_{n}.
\end{equation}
On the event $D_{n}$ it holds that $d(v, \Gamma_{M_{n}^{(1)}})> \delta$. Let $v=(t_{v},x_{v})$. Then as in the proof of Theorem 3.3 in Krizmani\'{c}~\cite{Kr22-2} it holds that
\begin{equation}\label{e:i*}
 \Big| W_{n}^{(1)} \Big( \frac{i^{*}}{n} \Big) - M_{n}^{(1)} \Big( \frac{i^{*}}{n} \Big) \Big| \geq d(v, \Gamma_{M_{n}^{(1)}})> \delta,
\end{equation}
with $i^{*}=\floor{nt_{v}}$ or $i^{*}=\floor{nt_{v}}-1$, where the former happens when $v$ lies on a horizontal part of the completed graph $\Gamma_{W_{n}^{(1)}}$ or on a vertical part of the graph with $x_{v} > M_{n}^{(1)}((\floor{nt_{v}}-1)/n)$, and the latter when $v$ lies on a vertical part of the graph with $x_{v} < M_{n}^{(1)}((\floor{nt_{v}}-1)/n)$. Since $|i^{*}/n -(i^{*}+l)/n| \leq m/n < \delta$ for every $l=1,\ldots,m$ (such that $i^{*}+l \leq n$), from the definition of the set $D_{n}$ one can similarly conclude that
\begin{equation}\label{e:i*q}
 \Big| W_{n}^{(1)} \Big( \frac{i^{*}}{n} \Big) - M_{n}^{(1)} \Big( \frac{i^{*}+l}{n} \Big) \Big| > \delta.
\end{equation}
Let $D = \bigvee_{k,j=1,\ldots,d}(D^{k,j}_{+} \vee D^{k,j}_{-})$. This implies $|C_{i:k,j}| \leq D$ for all $i \in \{0,\ldots,m\}$ and $k,j \in \{1,\ldots,d\}$.
We claim that
\begin{equation}\label{e:estim1}
 D_{n} \subseteq H_{n, 1} \cup H_{n, 2} \cup H_{n, 3} \cup H_{n, 4},
\end{equation}
where
\begin{eqnarray}
 \nonumber H_{n, 1} & = & \bigg\{ \exists\,l \in \{-m,\ldots,m\} \cup \{n-m+1, \ldots, n\} \ \textrm{such that} \ \frac{ D\|Z_{l}\|}{a_{n}} > \frac{\delta}{8(m+1)d} \bigg\},\\[0.1em]
  \nonumber H_{n, 2} & = & \bigg\{ \exists\,k \in \{1, \ldots, n\} \ \textrm{and} \ \exists\,l \in \{k-m,\ldots,k+m\} \setminus \{k\} \ \textrm{such that}\\[0.1em]
  \nonumber & & \ \frac{ D\|Z_{k}\|}{a_{n}} > \frac{\delta}{8(m+1)d} \ \textrm{and} \  \frac{ D\|Z_{l}\|}{a_{n}} > \frac{\delta}{8(m+1)d}  \bigg\},\\[0.1em]
   \nonumber H_{n, 3} & = & \bigg\{ \exists\,k \in \{1, \ldots, n\}, \ \exists\,j_{0} \in \{1, \ldots, d \} \ \textrm{and} \ \exists\,p \in \{1,\ldots,d\} \setminus \{j_{0}\} \ \textrm{such that}\\[0.1em]
  \nonumber & & \ \frac{ D|Z_{k}^{(j_{0})}|}{a_{n}} > \frac{\delta}{8(m+1)d} \ \textrm{and} \  \frac{ D|Z_{k}^{(p)}|}{a_{n}} > \frac{\delta}{8(m+1)d}  \bigg\},\\[0.1em]
  \nonumber H_{n, 4} & = & \bigg\{ \exists\,k \in \{1, \ldots, n\}, \ \exists\,j \in \{1,\ldots,n\} \setminus \{k,\ldots,k+m\}, \ \exists\,l_{1}, l_{2} \in \{0,\ldots,m\}\\[0.1em]
  \nonumber & & \ \textrm{and} \ \exists\,s_{1}, s_{2} \in \{1,\ldots,d\} \ \textrm{such that} \ (l_{1},s_{1}) \neq (l_{2}, s_{2}), \  \frac{ D\|Z_{k}\|}{a_{n}} > \frac{\delta}{8(m+1)d},\\[0.1em]
  \nonumber & & \ \frac{ D|Z_{j-l_{1}}^{(s_{1})}|}{a_{n}} > \frac{\delta}{8(m+1)d} \ \textrm{and} \  \frac{ D|Z_{j-l_{2}}^{(s_{2})}|}{a_{n}} > \frac{\delta}{8(m+1)d}  \bigg\}.
\end{eqnarray}
Note that relation (\ref{e:estim1}) will be proven if we show that
$$ \widehat{D}_{n} := D_{n} \cap (H_{n, 1} \cup H_{n, 2} \cup H_{n, 3})^{c} \subseteq H_{n, 4}.$$
Assume the event $\widehat{D}_{n}$ occurs. Then necessarily $W_{n}^{(1)}(i^{*}/n) > \delta / [8(m+1)d]$. Indeed, if $W_{n}^{(1)}(i^{*}/n) \leq \delta / [8(m+1)d]$, that is
$$ \bigvee_{i=1}^{i^{*}} \bigvee_{j=1}^{d} \frac{1}{a_{n}} \Big(D^{1,j}_{+}Z_{i}^{(j)+} \vee D^{1,j}_{-}Z_{i}^{(j)-}\Big)  = W_{n}^{(1)} \Big( \frac{i^{*}}{n} \Big) \leq \frac{\delta}{8(m+1)d},$$
then for every $s \in \{ m+1, \ldots, i^{*}\}$ it holds that
\begin{eqnarray}\label{e:phipm}
   \nonumber \frac{X_{s}^{(1)}}{a_{n}}  &=&  \sum_{r=0}^{m} \sum_{j=1}^{d} \frac{C_{r;1,j} Z_{s-r}^{(j)}}{a_{n}} \leq  \sum_{r=0}^{m} \sum_{j=1}^{d}  \frac{D^{1,j}_{+} Z_{s-r}^{(j)+} \vee D^{1,j}_{-} Z_{s-r}^{(j)-} }{a_{n}}\\[0.4em]
    &\leq& \sum_{r=0}^{m} \sum_{j=1}^{d} \frac{\delta}{8(m+1)d } = \frac{\delta}{8},
\end{eqnarray}
since by the definition of $D^{1,j}_{+}$ and $D^{1,j}_{-}$ we have $D^{1,j}_{+} Z_{s-r}^{(j)+} \geq 0$, $D^{1,j}_{-} Z_{s-r}^{(j)-} \geq 0$ and
$$ C_{r;1,j} Z_{s-r}^{(j)} \leq \left\{ \begin{array}{cl}
                                   D^{1,j}_{+} Z_{s-r}^{(j)+}, & \quad \textrm{if} \ C_{r;1,j} >0 \ \textrm{and} \ Z_{s-r}^{(j)}>0,\\[0.5em]
                                   D^{1,j}_{-} Z_{s-r}^{(j)-}, & \quad \textrm{if} \ C_{r;1,j} <0 \ \textrm{and} \ Z_{s-r}^{(j)}<0 ,\\[0.5em]
                                   0, & \quad \textrm{if} \  C_{r;1,j} \cdot Z_{s-r}^{(j)} \leq 0.
                                 \end{array}\right.$$
Since the event $H_{n, 1}^{c}$ occurs, for every $s \in \{1, \ldots, m\}$ we also have
\begin{equation}\label{e:phipm2}
  \frac{|X_{s}^{(1)}|}{a_{n}}  \leq  \sum_{r=0}^{m} \sum_{j=1}^{d} |C_{r;1,j}| \frac{|Z_{s-r}^{(j)}|}{a_{n}} \leq \sum_{r=0}^{m} \sum_{j=1}^{d}  \frac{D \|Z_{s-r}\|}{a_{n}} \leq (m+1)d\,\frac{\delta}{8(m+1)d} = \frac{\delta}{8}.
\end{equation}
Combining (\ref{e:phipm}) and (\ref{e:phipm2}) we obtain
\begin{equation}\label{e:phipm3}
 -\frac{\delta}{8} \leq \frac{X_{1}^{(1)}}{a_{n}} \leq M_{n}^{(1)} \Big( \frac{i^{*}}{n} \Big) = \bigvee_{s=1}^{i^{*}}\frac{X_{s}^{(1)}}{a_{n}}  \leq \frac{\delta}{8},
 \end{equation}
and thus
$$ \Big| W_{n}^{(1)} \Big( \frac{i^{*}}{n} \Big) - M_{n}^{(1)} \Big( \frac{i^{*}}{n} \Big) \Big| \leq  \Big| W_{n}^{(1)} \Big( \frac{i^{*}}{n} \Big) \Big| + \Big| M_{n}^{(1)} \Big( \frac{i^{*}}{n} \Big)\Big| \leq \frac{\delta}{8(m+1)d} + \frac{\delta}{8} \leq \frac{\delta}{4},$$
which is in contradiction with (\ref{e:i*}).

Therefore $W_{n}^{(1)}(i^{*}/n) > \delta / [8(m+1)d]$, and hence there exist $k \in \{1,\ldots,i^{*}\}$ and $j_{0} \in \{1,\ldots,d\}$ such that
\begin{equation}\label{e:est31}
 W_{n}^{(1)} \Big( \frac{i^{*}}{n} \Big) = \frac{1}{a_{n}} \Big(D^{1,j_{0}}_{+}Z_{k}^{(j_{0})+} \vee D^{1,j_{0}}_{-}Z_{k}^{(j_{0})-}\Big) > \frac{\delta}{8(m+1)d}.
\end{equation}
This implies
$$ \frac{D\|Z_{k}\|}{a_{n}} = \frac{D}{a_{n}} \bigvee_{j=1}^{d}|Z_{k}^{(j)}| \geq \frac{D}{a_{n}}|Z_{k}^{(j_{0})}| \geq  \frac{1}{a_{n}} \Big(D^{1,j_{0}}_{+}Z_{k}^{(j_{0})+} \vee D^{1,j_{0}}_{-}Z_{k}^{(j_{0})-}\Big) >  \frac{\delta}{8(m+1)d}.$$
From this, since $H_{n, 1}^{c} \cap H_{n, 2}^{c} \cap H_{n, 3}^{c}$ occurs, it follows that $m+1 \leq k \leq n-m$,
\begin{equation}\label{e:est51}
\frac{ D\|Z_{l}\|}{a_{n}}  \leq \frac{\delta}{8(m+1)d} \qquad \textrm{for all} \ l \in \{k-m,\ldots,k+m\} \setminus \{k\},
\end{equation}
and
\begin{equation}\label{e:est51-1}
\frac{ D|Z_{k}^{(p)}|}{a_{n}}  \leq \frac{\delta}{8(m+1)d} \qquad \textrm{for all} \ p \in \{1,\ldots,d\} \setminus \{j_{0}\}.
\end{equation}
The next step is to show that $M_{n}^{(1)}(i^{*}/n) = X_{j}^{(1)}/a_{n}$ for some $j \in \{1,\ldots,i^{*}\} \setminus \{k,\ldots,k+m\}$. If this is not the case, then $M_{n}^{(1)}(i^{*}/n) = X_{j}^{(1)}/a_{n}$ for some $j \in \{k,\ldots,k+m\}$ (with $j \leq i^{*}$). On the event $\{ Z_{k}^{(j_{0})}>0 \}$ it holds that
 $$  W_{n}^{(1)} \Big( \frac{i^{*}}{n} \Big) = \frac{1}{a_{n}} D^{1,j_{0}}_{+}Z_{k}^{(j_{0})+} = \frac{1}{a_{n}}C_{q;1,j_{0}}Z_{k}^{(j_{0})}$$
 for some $q \in \{0,\ldots, m\}$ (with $C_{q;1,j_{0}} \geq 0$), and now
we distinguish two cases:
\begin{itemize}
\item[(i)] $k+m \leq i^{*}$.
In this case $k+q \leq i^{*}$, and hence
\begin{equation}\label{e:est6}
\frac{X_{j}^{(1)}}{a_{n}} = M_{n}^{(1)}  \Big( \frac{i^{*}}{n} \Big) \geq \frac{X_{k+q}^{(1)}}{a_{n}}.
\end{equation}
Observe that we can write
 \begin{eqnarray*}
 \frac{X_{j}^{(1)}}{a_{n}} &=& \sum_{r=0}^{m} \sum_{p=1}^{d} \frac{C_{r;1,p} Z_{j-r}^{(p)}}{a_{n}} = \sum_{p=1}^{d} \frac{C_{j-k;1,p} Z_{k}^{(p)}}{a_{n}} + \sum_{\scriptsize \begin{array}{c}
                          r=0  \\[-0.1em]
                          r \neq j-k
                        \end{array}}^{m} \sum_{p=1}^{d} \frac{C_{r;1,p} Z_{j-r}^{(p)}}{a_{n}}\\[0.5em]
 & = & \frac{C_{j-k;1,j_{0}} Z_{k}^{(j_{0})}}{a_{n}}  + \sum_{\scriptsize \begin{array}{c}
                          p=1  \\[-0.1em]
                          p \neq j_{0}
                        \end{array}}^{d} \frac{C_{j-k;1,p} Z_{k}^{(p)}}{a_{n}} + \sum_{\scriptsize \begin{array}{c}
                          r=0  \\[-0.1em]
                          r \neq j-k
                        \end{array}}^{m} \sum_{p=1}^{d} \frac{C_{r;1,p} Z_{j-r}^{(p)}}{a_{n}}\\[0.5em]
 &=:& \frac{C_{j-k;1,j_{0}} Z_{k}^{(j_{0})}}{a_{n}} + F_{1},
 \end{eqnarray*}
and
 \begin{eqnarray*}
 \frac{X_{k+q}^{(1)}}{a_{n}} &=&  \frac{C_{q;1,j_{0}} Z_{k}^{(j_{0})}}{a_{n}}  + \sum_{\scriptsize \begin{array}{c}
                          p=1  \\[-0.1em]
                          p \neq j_{0}
                        \end{array}}^{d} \frac{C_{q;1,p} Z_{k}^{(p)}}{a_{n}} + \sum_{\scriptsize \begin{array}{c}
                          r=0  \\[-0.1em]
                          r \neq q
                        \end{array}}^{m} \sum_{p=1}^{d} \frac{C_{r;1,p} Z_{k+q-r}^{(p)}}{a_{n}}\\[0.5em]
 &=:& \frac{C_{q;1,j_{0}} Z_{k}^{(j_{0})}}{a_{n}} + F_{2}.
 \end{eqnarray*}
Relations (\ref{e:est51}) and (\ref{e:est51-1}) yield
\begin{eqnarray*}
|F_{1}| & \leq &  \sum_{\scriptsize \begin{array}{c}
                          p=1  \\[-0.1em]
                          p \neq j_{0}
                        \end{array}}^{d} \frac{D |Z_{k}^{(p)}|}{a_{n}} + \sum_{\scriptsize \begin{array}{c}
                          r=0  \\[-0.1em]
                          r \neq j-k
                        \end{array}}^{m} \sum_{p=1}^{d} \frac{D  \|Z_{j-r}\|}{a_{n}}\\[0.5em]
   & \leq & (d-1) \frac{\delta}{8(m+1)d} + md \frac{\delta}{8(m+1)d} < \frac{\delta}{4}.
\end{eqnarray*}
and similarly $|F_{2}| < \delta/4$.
Since by the definition of $D^{1,j_{0}}_{+}$ it holds that
$ C_{q;1,j_{0}} - C_{j-k;1,j_{0}} = D^{1,j_{0}}_{+} - C_{j-k;1,j_{0}} \geq 0$,
 using (\ref{e:est6}) we obtain
$$ 0 \leq  (C_{q;1,j_{0}} - C_{j-k;1,j_{0}})\frac{Z_{k}^{(j_{0})}}{a_{n}} =  \frac{X_{k+q}^{(1)}}{a_{n}} -F_{2} -  \frac{X_{j}^{(1)}}{a_{n}} + F_{1}  \leq |F_{1}| + |F_{2}| < \frac{\delta}{2}.$$
This implies
\begin{eqnarray*}
\Big| \frac{C_{q;1,j_{0}} Z_{k}^{(j_{0})}}{a_{n}} - \frac{X_{j}^{(1)}}{a_{n}} \Big| &=& \Big| \frac{C_{q;1,j_{0}} Z_{k}^{(j_{0})}}{a_{n}} -  \frac{C_{j-k;1,j_{0}}Z_{k}^{(j_{0})}}{a_{n}} - F_{1} \Big|\\[0.5em]
& \leq & (C_{q;1,j_{0}} - C_{j-k;1,j_{0}})\frac{Z_{k}^{(j_{0})}}{a_{n}} + |F_{1}|\\[0.4em]
 &<& \frac{\delta}{2} + \frac{\delta}{4} = \frac{3\delta}{4},
\end{eqnarray*}
but on the other hand, by (\ref{e:i*}) it holds that
$$ \Big| \frac{C_{q;1,j_{0}} Z_{k}^{(j_{0})}}{a_{n}} - \frac{X_{j}^{(1)}}{a_{n}} \Big| = \Big| W_{n}^{(1)} \Big( \frac{i^{*}}{n} \Big) - M_{n}^{(1)} \Big( \frac{i^{*}}{n} \Big) \Big| > \delta.$$
Hence we arrive at a contradiction, and this case can not happen.
\item[(ii)] $k+m > i^{*}$. Note that in this case $k \leq j \leq i^{*} < k+m$. Since
 $$ M_{n}^{(1)} \Big( \frac{k+m}{n} \Big) = \bigvee_{s=1}^{k+m}\frac{X_{s}^{(1)}}{a_{n}} \geq M_{n}^{(1)}  \Big( \frac{i^{*}}{n} \Big) = \frac{X_{j}^{(1)}}{a_{n}},$$
 it holds that
 $$ M_{n}^{(1)} \Big( \frac{k+m}{n} \Big) = \frac{X_{s_{0}}^{(1)}}{a_{n}},$$
 for some $s_{0} \in \{j, \ldots, k+m \} \subseteq \{k, \ldots, k+m\}$.
 Then
\begin{eqnarray*}
 \frac{X_{s_{0}}^{(1)}}{a_{n}} &=& \frac{C_{s_{0}-k;1,j_{0}} Z_{k}^{(j_{0})}}{a_{n}}  + \sum_{\scriptsize \begin{array}{c}
                          p=1  \\[-0.1em]
                          p \neq j_{0}
                        \end{array}}^{d} \frac{C_{s_{0}-k;1,p} Z_{k}^{(p)}}{a_{n}} + \sum_{\scriptsize \begin{array}{c}
                          r=0  \\[-0.1em]
                          r \neq s_{0}-k
                        \end{array}}^{m} \sum_{p=1}^{d} \frac{C_{r;1,p} Z_{s_{0}-r}^{(p)}}{a_{n}}\\[0.4em]
 &=:& \frac{C_{s_{0}-k;1,j_{0}} Z_{k}^{(j_{0})}}{a_{n}} + F_{3},
\end{eqnarray*}
with $|F_{3}| < \delta/4$, which holds by relations (\ref{e:est51}) and (\ref{e:est51-1}). By relation (\ref{e:i*q}) we have
\begin{equation}\label{e:auxineq1}
 \Big| \frac{C_{q;1,j_{0}} Z_{k}^{(j_{0})}}{a_{n}} - \frac{X_{s_{0}}^{(1)}}{a_{n}} \Big| = \Big| W_{n}^{(1)} \Big( \frac{i^{*}}{n} \Big) - M_{n}^{(1)} \Big( \frac{k+m}{n} \Big) \Big| > \delta,
\end{equation}
and repeating the arguments as in (i), but with
$$ \frac{X_{s_{0}}^{(1)}}{a_{n}} = M_{n}^{(1)}  \Big( \frac{k+m}{n} \Big) \geq \frac{X_{k+q}^{(1)}}{a_{n}}$$
instead of (\ref{e:est6}), we arrive at
$$ \delta < \Big| \frac{C_{q;1,j_{0}} Z_{k}^{(j_{0})}}{a_{n}} -  \frac{C_{s_{0}-k;1,j_{0}}Z_{k}^{(j_{0})}}{a_{n}} - F_{3} \Big| \leq (C_{q;1,j_{0}} - C_{s_{0}-k;1,j_{0}})\frac{Z_{k}^{(j_{0})}}{a_{n}} + |F_{3}| <  \frac{3\delta}{4},$$
which is not possible, and we conclude that this case also can not happen.
\end{itemize}
One can similarly handle the event $\{ Z_{k}^{(j_{0})} <0 \}$ to arrive at a contradiction. Therefore indeed $M_{n}^{(1)}(i^{*}/n) = X_{j}^{(1)}/a_{n}$ for some $j \in \{1,\ldots,i^{*}\} \setminus \{k,\ldots,k+m\}$.

Now we have four cases: (A1) all random vectors $Z_{j-m}, \ldots, Z_{j}$ are "small", (A2) exactly one is "large" with exactly one "large" component, (A3) exactly one is "large" with at least two "large" components and (A4) at least two of them are "large", where we say $Z$ is "large" if $ D\|Z\| /a_{n} > \delta/[8(m+1)d]$, otherwise it is "small", and similarly the component $Z^{(s)}$ is "large" if $D |Z^{(s)}| /a_{n} > \delta/[8(m+1)d] $. We will show that the first two cases are not possible.
\begin{itemize}
  \item[(A1)] $ D\|Z_{j-l}\|/a_{n} \leq \delta/[8(m+1)d]$ for every $l=0,\ldots,m$.
 This yields (as in (\ref{e:phipm2}))
$$ \Big| M_{n}^{(1)} \Big( \frac{i^{*}}{n} \Big) \Big| = \frac{|X_{j}^{(1)}|}{a_{n}} \leq  \frac{\delta}{8}.$$
Let $q$ be as above (on the event $\{ Z_{k}^{(j_{0})}>0 \}$), i.e.
  $$  W_{n}^{(1)} \Big( \frac{i^{*}}{n} \Big) = \frac{1}{a_{n}} D^{1,j_{0}}_{+}Z_{k}^{(j_{0})+} = \frac{1}{a_{n}}C_{q;1,j_{0}}Z_{k}^{(j_{0})}.$$
 If $k+m \leq i^{*}$, then according to (\ref{e:est6}),
\begin{equation}\label{e:A1case}
 \frac{X_{j}^{(1)}}{a_{n}} \geq \frac{X_{k+q}^{(1)}}{a_{n}} = \frac{C_{q;1,j_{0}} Z_{k}^{(j_{0})}}{a_{n}} + F_{2},
\end{equation}
where $F_{2}$ is as in (i) above, with $|F_{2}| < \delta/4$.
This implies
$$  \frac{C_{q;1,j_{0}}Z_{k}^{(j_{0})}}{a_{n}} \leq \frac{X_{j}^{(1)}}{a_{n}} - F_{2} \leq \frac{|X_{j}^{(1)}|}{a_{n}} + |F_{2}| < \frac{\delta}{8} + \frac{\delta}{4} = \frac{3\delta}{8},$$
and hence
\begin{eqnarray*}
 \Big| W_{n}^{(1)} \Big( \frac{i^{*}}{n} \Big) - M_{n}^{(1)} \Big( \frac{i^{*}}{n} \Big) \Big| &=& \Big| \frac{C_{q;1,j_{0}} Z_{k}^{(j_{0})}}{a_{n}} - \frac{X_{j}^{(1)}}{a_{n}} \Big|\\[0.4em]
 &\leq &  \frac{C_{q;1,j_{0}} Z_{k}^{(j_{0})}}{a_{n}} + \frac{|X_{j}^{(1)}|}{a_{n}}
  < \frac{3\delta}{8} + \frac{\delta}{8} = \frac{\delta}{2},
\end{eqnarray*}
 which is in contradiction with (\ref{e:i*}). On the other hand, if $k+m > i^{*}$, we have two possibilities: $M_{n}^{(1)}((k+m)/n) = M_{n}^{(1)}(i^{*}/n)$ or $M_{n}^{(1)}((k+m)/n) > M_{n}^{(1)}(i^{*}/n)$. When $M_{n}^{(1)}((k+m)/n) = M_{n}^{(1)}(i^{*}/n) = X_{j}^{(1)}/a_{n}$, since $k+q \leq k+m$ note that relation (\ref{e:A1case}) holds, and similarly as above we obtain
 $$ \Big| W_{n}^{(1)} \Big( \frac{i^{*}}{n} \Big) - M_{n}^{(1)} \Big( \frac{k+m}{n} \Big) \Big| = \Big| \frac{C_{q;1,j_{0}} Z_{k}^{(j_{0})}}{a_{n}} - \frac{X_{j}^{(1)}}{a_{n}} \Big| <  \frac{\delta}{2},$$
  which is in contradiction with (\ref{e:i*q}). Alternatively, when $M_{n}^{(1)}((k+m)/n) > M_{n}^{(1)}(i^{*}/n)$, then it holds that  $M_{n}((k+m)/n) = X_{s_{0}}^{(1)}/a_{n}$ for some $s_{0} \in \{i^{*}, \ldots, k+m\}$. Now in the same manner as in (ii) above we get a contradiction. We handle the event $\{ Z_{k}^{(j_{0})} <0 \}$ similarly to arrive at a contradiction, and therefore this case can not happen.

  \item[(A2)]  There exist $l_{1} \in \{0,\ldots,m\}$ and $s_{1} \in \{1,\ldots,d\}$ such that $ D|Z_{j-l_{1}}^{(s_{1})}|/a_{n} > \delta/[8(m+1)d]$, $D|Z_{j-l_{1}}^{(s)}|/a_{n} \leq \delta/[8(m+1)d]$ for every $s \in \{1,\ldots,d\} \setminus \{s_{0}\}$, and $ D\|Z_{j-l}\|/a_{n} \leq \delta/[8(m+1)d]$ for every $l \in \{0,\ldots,m\} \setminus \{l_{1}\}$. Here we analyze only what happens on the event $\{Z_{k}^{(j_{0})}>0\}$ (the event $\{Z_{k}^{(j_{0})}<0\}$ can be treated analogously and is therefore omitted). Assume first $k+m \leq i^{*}$. Then
\begin{equation}\label{e:est7}
\frac{X_{j}^{(1)}}{a_{n}} \geq \frac{X_{k+q}^{(1)}}{a_{n}} = \frac{C_{q;1,j_{0}} Z_{k}^{(j_{0})}}{a_{n}} + F_{2},
\end{equation}
where $q$ and $F_{2}$ are as in (i) above, with $|F_{2}| < \delta/4$.
 Write
\begin{eqnarray*}
\frac{X_{j}^{(1)}}{a_{n}} &=& \frac{C_{l_{1};1,s_{1}} Z_{j-l_{1}}^{(s_{1})}}{a_{n}}  + \sum_{\scriptsize \begin{array}{c}
                          s=1  \\[-0.1em]
                          s \neq s_{1}
                        \end{array}}^{d} \frac{C_{l_{1};1,s} Z_{j-l_{1}}^{(s)}}{a_{n}} + \sum_{\scriptsize \begin{array}{c}
                          r=0  \\[-0.1em]
                          r \neq l_{1}
                        \end{array}}^{m} \sum_{s=1}^{d} \frac{C_{r;1,p} Z_{j-r}^{(s)}}{a_{n}}\\[0.2em]
 &=:& \frac{C_{l_{1};1,s_{1}} Z_{j-l_{1}}^{(s_{1})}}{a_{n}} + F_{4}.
\end{eqnarray*}
Similarly as before for $F_{1}$ we obtain $|F_{4}| < \delta/4$. Since $j-l_{1} \leq j \leq i^{*}$, by the definition of the process $W_{n}$ we have
$$ \frac{C_{q;1,j_{0}}Z_{k}^{(j_{0})}}{a_{n}} =  W_{n}^{(1)} \Big( \frac{i^{*}}{n} \Big) \geq \frac{1}{a_{n}} \Big(D^{1,s_{1}}_{+}Z_{j-l_{1}}^{(s_{1})+} \vee D^{1,s_{1}}_{-}Z_{j-l_{1}}^{(s_{1})-}\Big)
 \geq \frac{C_{l_{1};1,s_{1}}Z_{j-l_{1}}^{(s_{1})}}{a_{n}},$$
and this yields
\begin{equation}\label{e:est8}
\frac{ C_{q;1,j_{0}} Z_{k}^{(j_{0})}}{a_{n}} - \frac{X_{j}^{(1)}}{a_{n}} \geq \frac{C_{l_{1};1, s_{1}}Z_{j-l_{1}}^{(s_{1})}}{a_{n}} - \frac{X_{j}^{(1)}}{a_{n}} = - F_{4}.
\end{equation}
Relations (\ref{e:est7}) and (\ref{e:est8}) yield
$$- (|F_{2}| + |F_{4}|) \leq -F_{4} \leq \frac{C_{q;1,j_{0}} Z_{k}^{(j_{0})}}{a_{n}} - \frac{X_{j}^{(1)}}{a_{n}} \leq -F_{2} \leq  |F_{2}| + |F_{4}|,$$
and this implies
$$ \Big| W_{n}^{(1)} \Big( \frac{i^{*}}{n} \Big) - M_{n}^{(1)} \Big( \frac{i^{*}}{n} \Big) \Big| = \Big| \frac{C_{q;1,j_{0}} Z_{k}^{(j_{0})}}{a_{n}} - \frac{X_{j}^{(1)}}{a_{n}} \Big| \leq |F_{2}| + |F_{4}| < \frac{\delta}{4} + \frac{\delta}{4}=\frac{\delta}{2},$$
which is in contradiction with (\ref{e:i*}).
 Assume now $k+m > i^{*}$. If $M_{n}^{(1)}((k+m)/n) = M_{n}^{(1)}(i^{*}/n) = X_{j}^{(1)}/a_{n}$, relation (\ref{e:est7}) still holds and this leads to
 $$ \Big| W_{n}^{(1)} \Big( \frac{i^{*}}{n} \Big) - M_{n}^{(1)} \Big( \frac{k+m}{n} \Big) \Big| = \Big| \frac{C_{q;1,j_{0}} Z_{k}^{(j_{0})}}{a_{n}} - \frac{X_{j}^{(1)}}{a_{n}} \Big| <  \frac{\delta}{2},$$
  which is in contradiction with (\ref{e:i*q}). On the other hand, if $M_{n}^{(1)}((k+m)/n) > M_{n}^{(1)}(i^{*}/n)$, then $M_{n}^{(1)}((k+m)/n) = X_{s_{0}}^{(1)}/a_{n}$ for some $s_{0} \in \{i^{*}, \ldots, k+m\}$, and with the same arguments as in (ii) above
 we obtain
a contradiction with relation (\ref{e:i*q}).
Hence this case also can not happen.

  \item[(A3)] There exist $l_{1} \in \{0,\ldots,m\}$ and $s_{1}, s_{2} \in \{1,\ldots,d\}$ such that $s_{1} \neq s_{2}$,
  $D|Z_{j-l_{1}}^{(s_{1})}|/a_{n} > \delta/[8(m+1)d]$, $ D|Z_{j-l_{1}}^{(s_{2})}|/a_{n} > \delta/[8(m+1)d]$ and $D\|Z_{j-l}\|/a_{n} \leq \delta/[8(m+1)d]$ for all $l \in \{0,\ldots,m\} \setminus \{l_{1}\}$. Note that in this case the event $H_{n, 4}$ occurs (with $l_{2}=l_{1}$).

  \item[(A4)] There exist $l_{1}, l_{2} \in \{0,\ldots,m\}$ such that $l_{1} \neq l_{2}$,
  $D\|Z_{j-l_{1}}\|/a_{n} > \delta/[8(m+1)d]$ and $D\|Z_{j-l_{2}}\|/a_{n} > \delta/[8(m+1)d]$. The definition of the max-norm $\|\cdot\|$ implies the existence of $s_{1}, s_{2} \in \{1,\ldots,d\}$ such that $D|Z_{j-l_{1}}^{(s_{1})}|/a_{n} > \delta/[8(m+1)d]$ and $D|Z_{j-l_{2}}^{(s_{2})}|/a_{n} > \delta/[8(m+1)d]$. Hence the event $H_{n, 4}$ occurs also in this case.
\end{itemize}

Therefore only the cases (A3) and (A4) are possible, and this yields $\widehat{D}_{n} \subseteq H_{n, 4}$. Hence (\ref{e:estim1}) holds.
By stationarity we have
\begin{equation}\label{e:Hn1new}
 \Pr(H_{n, 1}) \leq (3m+1) \Pr \bigg( \frac{D\|Z_{1}\|}{a_{n}} > \frac{\delta}{8(m+1)d} \bigg).
\end{equation}
For an arbitrary $c \in \mathbb{R}$, $c>0$, it holds that
   \begin{eqnarray*}
     \Pr \bigg( \frac{D\|Z_{1}\|}{a_{n}} > \frac{\delta}{8(m+1)d} \bigg) & &\\[0.6em]
      & \hspace*{-16em} =& \hspace*{-8em} \Pr \bigg( \frac{D\|Z_{1}\|}{a_{n}} > \frac{\delta}{8(m+1)d},\,D > c \bigg) + \Pr \bigg( \frac{D\|Z_{1}\|}{a_{n}} > \frac{\delta}{8(m+1)d},\,D \leq c \bigg)\\[0.6em]
       & \hspace*{-16em} \leq & \hspace*{-8em} \Pr(D > c) + \Pr \bigg( \frac{\|Z_{1}\|}{a_{n}} > \frac{\delta}{8(m+1)dc} \bigg).
   \end{eqnarray*}
By the regular variation property (i.e.~relation (\ref{e:nizrv}))
\begin{equation*}
\lim_{n \to \infty}  \Pr \bigg( \frac{\|Z_{1}\|}{a_{n}} > \frac{\delta}{8(m+1)dc} \bigg) =0,
\end{equation*}
and therefore from (\ref{e:Hn1new}) we get
$ \limsup_{n \to \infty} \Pr (H_{n,1}) \leq (3m+1) \Pr (D > c).$
Letting $c \to \infty$ we conclude
\begin{equation}\label{e:est2}
\lim_{n \to \infty} \Pr(H_{n, 1})=0.
\end{equation}
As for $H_{n, 2}$, by stationarity we have
\begin{eqnarray}
 \nonumber \Pr(H_{n, 2} \cap \{D \leq c\}) &&\\[0.5em]
 \nonumber & \hspace*{-12em} =& \hspace*{-6em} \sum_{k=1}^{n} \sum_{\scriptsize \begin{array}{c}
                          l=k-m  \\[-0.1em]
                          l \neq k
                        \end{array}}^{k+m} \Pr \bigg( \frac{D\|Z_{k}\|}{a_{n}} > \frac{\delta}{8(m+1)d},\,\frac{D\|Z_{l}\|}{a_{n}} > \frac{\delta}{8(m+1)d},\,D \leq c \bigg)\\[-0.1em]
   \nonumber & \hspace*{-12em} \leq & \hspace*{-6em} \sum_{k=1}^{n} \sum_{\scriptsize \begin{array}{c}
                          l=k-m  \\[-0.1em]
                          l \neq k
                        \end{array}}^{k+m} \Pr \bigg( \frac{\|Z_{k}\|}{a_{n}} > \frac{\delta}{8(m+1)dc},\,\frac{\|Z_{l}\|}{a_{n}} > \frac{\delta}{8(m+1)dc} \bigg)\\[0.1em]
   \nonumber & \hspace*{-12em} \leq & \hspace*{-6em} 2n \sum_{i=1}^{m} \Pr \bigg( \frac{\|Z_{0}\|}{a_{n}} > \frac{\delta}{8(m+1)dc},\,\frac{\|Z_{i}\|}{a_{n}} > \frac{\delta}{8(m+1)dc} \bigg)\\[0.1em]
   \nonumber & \hspace*{-12em} \leq & \hspace*{-6em} 2 \sum_{i=1}^{m} n \Pr \bigg( \frac{\|Z_{0}\|}{a_{n}} > \frac{\delta}{8(m+1)dc} \bigg) \frac{\Pr \Big( \frac{\|Z_{0}\|}{a_{n}} > \frac{\delta}{8(m+1)dc},\,\frac{\|Z_{i}\|}{a_{n}} > \frac{\delta}{8(m+1)dc} \Big)}{\Pr \Big( \frac{\|Z_{0}\|}{a_{n}} > \frac{\delta}{8(m+1)dc} \Big)}
 \end{eqnarray}
 for an arbitrary $c>0$. Therefore relation (\ref{e:nizrv}) and the asymptotical independence condition (\ref{e:asyind}) yield
$ \lim_{n \to \infty} \Pr(H_{n, 2} \cap \{ D \leq c \})=0$, and this implies
 $$ \limsup_{n \to \infty}  \Pr(H_{n, 2}) \leq \limsup_{n \to \infty} \Pr(H_{n, 2} \cap \{D > c\}) \leq \Pr (D > c).$$
Letting again $c \to \infty$ we conclude
\begin{equation}\label{e:est5}
\lim_{n \to \infty} \Pr(H_{n, 2})=0.
\end{equation}
By the definition of the set $H_{n, 3}$ and stationarity it holds that
\begin{eqnarray}
 \nonumber \Pr(H_{n, 3} \cap \{D \leq c\}) &&\\[0.5em]
 \nonumber & \hspace*{-14em} =& \hspace*{-7em} \sum_{k=1}^{n} \sum_{\scriptsize \begin{array}{c}
                          l,s=1  \\[-0.1em]
                          l \neq s
                        \end{array}}^{d} \Pr \bigg( \frac{D|Z_{k}^{(l)}|}{a_{n}} > \frac{\delta}{8(m+1)d},\,\frac{D|Z_{k}^{(s)}|}{a_{n}} > \frac{\delta}{8(m+1)d},\,D \leq c \bigg)\\[-0.1em]
   \nonumber & \hspace*{-14em} \leq & \hspace*{-7em}  \sum_{\scriptsize \begin{array}{c}
                          l,s=1  \\[-0.1em]
                          l \neq s
                        \end{array}}^{d} n \Pr \bigg( \frac{|Z_{1}^{(l)}|}{a_{n}} > \frac{\delta}{8(m+1)dc},\,\frac{|Z_{1}^{(s)}|}{a_{n}} > \frac{\delta}{8(m+1)dc} \bigg)\\[-0.1em]
   \nonumber & \hspace*{-14em} = & \hspace*{-7em}  \sum_{\scriptsize \begin{array}{c}
                          l,s=1  \\[-0.1em]
                          l \neq s
                        \end{array}}^{d} n \Pr \bigg( \frac{|Z_{1}^{(s)}|}{a_{n}} > \frac{\delta}{8(m+1)dc} \bigg) \Pr \bigg( \frac{|Z_{1}^{(l)}|}{a_{n}} > \frac{\delta}{8(m+1)dc}\,\bigg|\,\frac{|Z_{1}^{(s)}|}{a_{n}} > \frac{\delta}{8(m+1)dc} \bigg)\\[-0.1em]
  \nonumber & \hspace*{-14em} \leq & \hspace*{-7em}  \sum_{\scriptsize \begin{array}{c}
                          l,s=1  \\[-0.1em]
                          l \neq s
                        \end{array}}^{d} n \Pr \bigg( \frac{\|Z_{1}\|}{a_{n}} > \frac{\delta}{8(m+1)dc} \bigg) \Pr \bigg( \frac{|Z_{1}^{(l)}|}{a_{n}} > \frac{\delta}{8(m+1)dc}\,\bigg|\,\frac{|Z_{1}^{(s)}|}{a_{n}} > \frac{\delta}{8(m+1)dc} \bigg).
 \end{eqnarray}
  By relations (\ref{e:asyindcomp}) and (\ref{e:nizrv}) it follows that $ \lim_{n \to \infty} \Pr(H_{n, 3} \cap \{ D \leq c \})=0$, and then letting $c \to \infty$ we obtain
\begin{equation}\label{e:est10}
\lim_{n \to \infty} \Pr(H_{n, 3})=0.
\end{equation}
 Note that
\begin{eqnarray}
  \nonumber H_{n, 4} & \subseteq &  \bigg\{ \exists\,j \in \{1,\ldots,n\}, \ \exists\,l_{1}, l_{2} \in \{0,\ldots,m\}
   \ \textrm{and} \ \exists\,s_{1}, s_{2} \in \{1,\ldots,d\} \ \textrm{such that}\\[0.1em]
   \nonumber && \ (l_{1},s_{1}) \neq (l_{2}, s_{2}), \ \frac{ D|Z_{j-l_{1}}^{(s_{1})}|}{a_{n}} > \frac{\delta}{8(m+1)d} \ \textrm{and} \  \frac{ D|Z_{j-l_{2}}^{(s_{2})}|}{a_{n}} > \frac{\delta}{8(m+1)d}  \bigg\},
\end{eqnarray}
which implies
 \begin{eqnarray}
 \nonumber \Pr(H_{n, 4} \cap \{D \leq c\}) &&\\[0.5em]
 \nonumber & \hspace*{-14em} =& \hspace*{-7em} \sum_{j=1}^{n} \sum_{\scriptsize \begin{array}{c}
                          l_{1},l_{2}=0  \\[-0.1em]
                          l_{1} \neq l_{2}
                        \end{array}}^{m}
                        \sum_{s_{1},s_{2}=1}^{d} \Pr \bigg( \frac{D|Z_{j-l_{1}}^{(s_{1})}|}{a_{n}} > \frac{\delta}{8(m+1)d},\,\frac{D|Z_{j-l_{2}}^{(s_{2})}|}{a_{n}} > \frac{\delta}{8(m+1)d},\,D \leq c \bigg)\\[-0.1em]
  \nonumber & \hspace*{-12em} + & \hspace*{-6em} \sum_{j=1}^{n} \sum_{l=0}^{m}
                        \sum_{\scriptsize \begin{array}{c}
                          s_{1},s_{2}=1\\[-0.1em]
                          s_{1} \neq s_{2}
                        \end{array}}^{d} \Pr \bigg( \frac{D|Z_{j-l}^{(s_{1})}|}{a_{n}} > \frac{\delta}{8(m+1)d},\,\frac{D|Z_{j-l}^{(s_{2})}|}{a_{n}} > \frac{\delta}{8(m+1)d},\,D \leq c \bigg)\\[-0.1em]
  \nonumber & \hspace*{-14em} \leq & \hspace*{-7em} \sum_{j=1}^{n} \sum_{\scriptsize \begin{array}{c}
                          l_{1},l_{2}=0  \\[-0.1em]
                          l_{1} \neq l_{2}
                        \end{array}}^{m}
                        d^{2} \Pr \bigg( \frac{\|Z_{j-l_{1}}\|}{a_{n}} > \frac{\delta}{8(m+1)dc},\,\frac{\|Z_{j-l_{2}}\|}{a_{n}} > \frac{\delta}{8(m+1)dc} \bigg)\\[-0.1em]
   \nonumber & \hspace*{-12em} + & \hspace*{-6em} \sum_{j=1}^{n} \sum_{l=0}^{m}
                        \sum_{\scriptsize \begin{array}{c}
                          s_{1},s_{2}=1\\[-0.1em]
                          s_{1} \neq s_{2}
                        \end{array}}^{d} \Pr \bigg( \frac{|Z_{j-l}^{(s_{1})}|}{a_{n}} > \frac{\delta}{8(m+1)dc},\,\frac{|Z_{j-l}^{(s_{2})}|}{a_{n}} > \frac{\delta}{8(m+1)dc} \bigg)
 \end{eqnarray}
 As before for $H_{n,2}$ and $H_{n,3}$ by relations (\ref{e:asyind}), (\ref{e:asyindcomp}) and (\ref{e:nizrv}) it follows that $ \lim_{n \to \infty} \Pr(H_{n, 4} \cap \{ D \leq c \})=0$, and again letting $c \to \infty$ yields
\begin{equation}\label{e:est10-1}
\lim_{n \to \infty} \Pr(H_{n, 4})=0.
\end{equation}
Now from (\ref{e:estim1}) and (\ref{e:est2})--(\ref{e:est10-1}) we obtain
$ \lim_{n \to \infty} \Pr(D_{n})=0,$
and hence (\ref{e:Yn}) yields
\begin{equation}\label{eq:Ynend}
\lim_{n \to \infty} \Pr(R_{n}> \delta)=0.
\end{equation}

It remains to estimate the second term on the right-hand side of (\ref{eq:AB}). Let
$$ E_{n} = \{\exists\,v \in \Gamma_{M_{n}^{(1)}} \ \textrm{such that} \ d(v,z) > \delta \ \textrm{for every} \ z \in \Gamma_{ W_{n}^{(1)}} \}.$$
Then by the definition of $T_{n}$
\begin{equation}\label{e:Tnfirst}
 \{T_{n} > \delta\} \subseteq E_{n}.
\end{equation}
On the event $E_{n}$ it holds that $d(v, \Gamma_{ W_{n}^{(1)}})> \delta$.
Interchanging the roles of the processes $M_{n}^{(1)}$ and $W_{n}^{(1)}$, in the same way as before for the event $D_{n}$ it can be shown that
\begin{equation}\label{e:i*qneg}
 \Big| W_{n}^{(1)} \Big( \frac{i^{*}-l}{n} \Big) - M_{n}^{(1)} \Big( \frac{i^{*}}{n} \Big) \Big| > \delta
\end{equation}
for all $l=0,\ldots, m$ (such that $i^{*}-l \geq 0$), where $i^{*}=\floor{nt_{v}}$ or $i^{*}=\floor{nt_{v}}-1$, and $v=(t_{v},x_{v})$.

Similarly as before we want to show that $E_{n} \cap (H_{n, 1} \cup H_{n, 2} \cup H_{n, 3})^{c} \subseteq H_{n, 4}$. Assume the event $E_{n} \cap (H_{n, 1} \cup H_{n, 2} \cup H_{n, 3})^{c}$ occurs. Note that (\ref{e:i*qneg}) (for $l=0$) is in fact (\ref{e:i*}), and hence by repeating the arguments used for $D_{n}$ we conclude that (\ref{e:est31}) holds, i.e.
$$W_{n}^{(1)} \Big( \frac{i^{*}}{n} \Big) = \frac{1}{a_{n}} \Big(D^{1,j_{0}}_{+}Z_{k}^{(j_{0})+} \vee D^{1,j_{0}}_{-}Z_{k}^{(j_{0})-}\Big) > \frac{\delta}{8(m+1)d}$$
for some $k \in \{1,\ldots, i^{*}\}$ and $j_{0} \in \{1,\ldots,d\}$.

Here we also claim that $M_{n}^{(1)}(i^{*}/n) = X_{j}^{(1)}/a_{n}$ for some $j \in \{1,\ldots,i^{*}\} \setminus \{k,\ldots,k+m\}$. Hence assume this is not the case, i.e. $M_{n}^{(1)}(i^{*}/n) = X_{j}^{(1)}/a_{n}$ for some $j \in \{k,\ldots,k+m\}$ (with $j \leq i^{*}$). We can repeat the arguments from (i) above to conclude that $k + m \leq i^{*}$ is not possible. It remains to see what happens when $k + m > i^{*}$. For this case we can not simply repeat the procedure from (ii), since now we do not have the inequality
$$\Big| W_{n}^{(1)} \Big( \frac{i^{*}}{n} \Big) - M_{n}^{(1)} \Big( \frac{k+m}{n} \Big) \Big| > \delta$$
from (\ref{e:auxineq1}) that we used in (ii). Let
 $$W_{n}^{(1)} \Big( \frac{i^{*}-m}{n} \Big) = \frac{1}{a_{n}} \Big(D^{1,j_{0}^{*}}_{+}Z_{k^{*}}^{(j_{0}^{*})+} \vee D^{1,j_{0}^{*}}_{-}Z_{k^{*}}^{(j_{0}^{*})-}\Big) = \frac{1}{a_{n}} C_{q^{*};1,j_{0}^{*}}Z_{k^{*}}^{(j_{0}^{*})} $$
for some $k^{*} \in \{1, \ldots, i^{*}-m\}$, $j_{0}^{*} \in \{1,\ldots,d\}$ and $q^{*} \in \{0,\ldots, m\}$. Note that $i^{*}-m \geq 1$ since $m+1 \leq k  \leq i^{*}$ (the first inequality holds since $H_{n, 1}^{c}$ occurs). We distinguish two cases:
\begin{itemize}
\item[(a)] $W_{n}^{(1)}(i^{*}/n) > M_{n}^{(1)}(i^{*}/n)$. In this case the definition of $i^{*}$ implies that $M_{n}^{(1)}(i^{*}/n) \leq x_{v} \leq W_{n}^{(1)}(i^{*}/n)$. To see this observe that $v$ lies on the part of the completed graph $\Gamma_{M_{n}^{(1)}}$ above the time segment $[i^{*}/n, (i^{*}+1)/n]$. Since the process $M_{n}^{(1)}$ is non-decreasing, this implies $M_{n}^{(1)}(i^{*}/n) \leq x_{v}$. The second inequality follows from the fact that $x_{v} > W_{n}^{(1)}(i^{*}/n)$ would imply $t_{v}=(i^{*}+1)/n$ and $(t_{v}, W_{n}^{(1)}(i^{*}/n)) \in \Gamma_{W_{n}^{(1)}} \cap \Gamma_{M_{n}^{(1)}}$, i.e. $d(v, \Gamma_{ W_{n}^{(1)}})=0$, which is not possible on the event $E_{n}$.
     Now, since $|t_{v}-(i^{*}-m)/n| < (m+1)/n \leq \delta$,  from $d(v, \Gamma_{W_{n}^{(1)}})> \delta$ we obtain
$$ \widetilde{d} \Big((x_{v}, \Big[ W_{n}^{(1)} \Big( \frac{i^{*}-m}{n} \Big), W_{n}^{(1)} \Big(\frac{i^{*}}{n} \Big) \Big] \Big) > \delta,$$
where $\widetilde{d}$ is the Euclidean metric on $\mathbb{R}$. This yields
$W_{n}^{(1)} ((i^{*}-m)/n) > M_{n}^{(1)} (i^{*}/n)$,
and by (\ref{e:i*qneg}) we have
\begin{equation}\label{e:estE1}
 W_{n}^{(1)} \Big( \frac{i^{*}-m}{n} \Big) > M_{n}^{(1)} \Big( \frac{i^{*}}{n} \Big) + \delta.
\end{equation}
From this, taking into account relation (\ref{e:phipm3}), we obtain
$$ \frac{D\|Z_{k^{*}}\|}{a_{n}} \geq \frac{D |Z_{k^{*}}^{(j_{0}^{*})}|}{a_{n}} \geq  W_{n}^{(1)} \Big( \frac{i^{*}-m}{n} \Big) >  -\frac{\delta}{8} + \delta = \frac{7\delta}{8} > \frac{\delta}{8(m+1)d},$$
and since the event $H_{n, 2}^{c} \cap H_{n, 3}^{c}$ occurs it follows that
\begin{equation}\label{e:estE12}
\frac{D\|Z_{l}\|}{a_{n}} \leq \frac{\delta}{8(m+1)d} \quad \textrm{for every} \  l \in \{k^{*}-m,\ldots,k^{*}+m\} \setminus \{k^{*}\},
\end{equation}
and
\begin{equation}\label{e:estE12-1}
\frac{D|Z_{k^{*}}^{(p)}|}{a_{n}} \leq \frac{\delta}{8(m+1)d} \quad \textrm{for every} \  p \in \{1,\ldots,d \} \setminus \{j_{0}^{*}\}.
\end{equation}
Since $k^{*} + q^{*} \leq i^{*} - m + q^{*} \leq i^{*}$, it holds that
\begin{equation}\label{e:estE2}
 \frac{X_{j}^{(1)}}{a_{n}} = M_{n}^{(1)} \Big( \frac{i^{*}}{n} \Big) \geq \frac{X_{k^{*}+q^{*}}^{(1)}}{a_{n}} = \frac{C_{q^{*};1,j_{0}^{*}} Z_{k_{*}}^{(j_{0}^{*})}}{a_{n}} + F_{5},
\end{equation}
where
$$ F_{5} =  \sum_{\scriptsize \begin{array}{c}
                          p=1  \\[-0.1em]
                          p \neq j_{0}^{*}
                        \end{array}}^{d} \frac{C_{q^{*};1,p} Z_{k^{*}}^{(p)}}{a_{n}} + \sum_{\scriptsize \begin{array}{c}
                          r=0  \\[-0.1em]
                          r \neq q^{*}
                        \end{array}}^{m} \sum_{p=1}^{d} \frac{C_{r;1,p} Z_{k^{*}+q^{*}-r}^{(p)}}{a_{n}}.$$
By (\ref{e:estE1}) and (\ref{e:estE2}) we have
$$ \frac{1}{a_{n}} C_{q^{*};1,j_{0}^{*}}Z_{k^{*}}^{(j_{0}^{*})} > \frac{X_{j}^{(1)}}{a_{n}} + \delta \geq \frac{1}{a_{n}} C_{q^{*};1,j_{0}^{*}}Z_{k^{*}}^{(j_{0}^{*})} + F_{5} + \delta,$$
i.e.
$F_{5} < -\delta$. But this is not possible since relations (\ref{e:estE12}) and (\ref{e:estE12-1}) imply $|F_{5}| \leq \delta/4$,
and hence we conclude that this case can not happen.
\item[(b)] $W^{(1)}_{n}(i^{*}/n) \leq M_{n}^{(1)}(i^{*}/n)$. Then from (\ref{e:i*qneg}) we get
\begin{equation}\label{e:estE3}
 M_{n}^{(1)} \Big( \frac{k+m}{n} \Big) \geq M_{n}^{(1)} \Big( \frac{i^{*}}{n} \Big) \geq W_{n}^{(1)} \Big( \frac{i^{*}}{n} \Big) + \delta.
\end{equation}
Therefore
$$ \Big| W_{n}^{(1)} \Big( \frac{i^{*}}{n} \Big) - M_{n}^{(1)} \Big(\frac{k+m}{n} \Big) \Big| > \delta,$$
and repeating the arguments from (ii) above we conclude that this case also can not happen.
\end{itemize}
Therefore we have proved that  $M_{n}^{(1)}(i^{*}/n) = X_{j}^{(1)}/a_{n}$ for some $j \in \{1,\ldots,i^{*}\} \setminus \{k,\ldots,k+m\}$. Similar as before one can show that Cases (A1) and (A2) can not happen, and hence only Cases (A3) and (A4) are possible, which means that the event $H_{n, 4}$ occurs. Therefore $E_{n} \cap (H_{n, 1} \cup H_{n, 2} \cup H_{n, 3})^{c} \subseteq H_{n, 4}$ holds, i.e.
$$ E_{n} \subseteq H_{n, 1} \cup H_{n, 2} \cup H_{n, 3} \cup H_{n, 4},$$
and from (\ref{e:est2})--(\ref{e:est10-1}) we obtain
$ \lim_{n \to \infty} \Pr(E_{n})=0.$
This and relation (\ref{e:Tnfirst}) yield
\be\label{eq:Tnend}
\lim_{n \to \infty} \Pr(T_{n}> \delta)=0.
\ee
Now from (\ref{eq:AB}), (\ref{eq:Ynend}) and (\ref{eq:Tnend}) we obtain (\ref{e:max1}),
which means that $M_{n} \dto M$
in $D_{\uparrow}([0,1], \mathbb{R}^{d})$ with the weak $M_{1}$ topology, and this concludes the proof.
\end{proof}

\section{Infinite order linear processes}\label{S:InfiniteMA}

Let $(Z_{i})_{i \in \mathbb{Z}}$ be a strictly stationary sequence of regularly varying $\mathbb{R}^{d}$--valued random vectors with index $\alpha>0$, and $(C_{i})_{i \geq 0}$ a sequence of random $d \times d$ matrices
 independent of $(Z_{i})$ such that the series defining the linear process
 \begin{equation}\label{e:infLP}
 X_{i} = \sum_{j=0}^{\infty}C_{j}Z_{i-j}, \qquad i \in \mathbb{Z},
 \end{equation}
is a.s.~convergent. For $k,j \in \{1,\ldots,d\}$ let
\begin{equation*}\label{e:Cplusminus1}
D^{k,j}_{+}= \max \{ C_{i;k,j}^{+} : i \geq 0 \} \quad \textrm{and} \quad D^{k,j}_{-}= \max \{ C_{i;k,j}^{-} : i \geq 0 \},
\end{equation*}
where $C_{i;k,j}$ is the $(k,j)$th entry of the matrix $C_{i}$. Let $M_{n}$ be the partial maxima process as defined in (\ref{eq:defWn}), and $M$ the process described in $(\ref{e:limprocess})$, that is
\begin{equation*}
 M(t) = \bigg( \bigvee_{j=1}^{d} \Big(\widetilde{D}^{k,j}_{+}M^{(j+)}(t) \vee \widetilde{D}^{k,j}_{-}M^{(j-)}(t) \Big) \bigg)_{k=1,\ldots,d}, \qquad t \in [0,1],
\end{equation*}
where $M^{(j+)}$ and $M^{(j-)}$ are extremal processes with exponent measures $\nu_{j+}$ and $\nu_{j-}$ respectively, given by
$$  \nu_{j+}(\rmd x) = \mathrm{E}(Q_{1}^{(j)+})^{\alpha} \, \alpha x^{-\alpha-1}\, \rmd x
\qquad \textrm{and} \qquad  \nu_{j-}(\rmd x) =  \mathrm{E}(Q_{1}^{(j)-})^{\alpha} \, \alpha x^{-\alpha-1} \, \rmd x$$
for $x>0$ $(j=1,\ldots,d)$, with $\sum_{i}\delta_{(T_{i},P_{i}Q_{i})}$ being the Poisson process from (\ref{e:BaTa1}),
and $((\widetilde{D}^{k,j}_{+}, \widetilde{D}^{k,j}_{-})^{*}_{j =1, \ldots, d})_{k=1,\ldots,d}$ is a $2d^{2}$--dimensional random vector, independent of $(M^{(j+)}, M^{(j-)})_{j=1,\ldots,d}$, such that
$$((\widetilde{D}^{k,j}_{+}, \widetilde{D}^{k,j}_{-})^{*}_{j =1, \ldots, d})_{k=1,\ldots,d} \eind ((D^{k,j}_{+}, D^{k,j}_{-})^{*}_{j =1, \ldots, d})_{k=1,\ldots,d}.$$

In order to obtain functional convergence of the partial maxima process for infinite order linear processes, we first approximate them by a sequence of finite order linear processes, for which Theorem~\ref{t:FLT} holds, and then show that the error of approximation is negligible in the limit with respect to the weak $M_{1}$ topology. In this case, besides the conditions from Theorem~\ref{t:FLT} for finite order linear processes, we will need also some moment conditions on the sequence of coefficients. Say here that the pairwise asymptotical independence condition (\ref{e:asyind}) and Conditions~\ref{c:mixcond1} and~\ref{c:mixcond2} in the next theorem hold under strong mixing and Leadbetter's dependence condition $D'$ (\ref{e:D'cond}), as noted in Subsection~\ref{ss:PP}.

\begin{thm}\label{t:infFLT}
Let $(Z_{i})_{i \in \mathbb{Z}}$ be a strictly stationary sequence of regularly varying $\mathbb{R}^{d}$--valued random vectors with index $\alpha >0$ that satisfy $(\ref{e:asyind})$ and $(\ref{e:asyindcomp})$, and let
 $(C_{i})_{i \geq 0}$ be a sequence of random $d\times d$ matrices independent of $(Z_{i})$. Assume Conditions~\ref{c:mixcond1} and~\ref{c:mixcond2} hold with the same sequence $(r_{n})$.  If $\alpha \in (0,1)$ suppose
 \begin{equation}\label{e:momcondr}
 \sum_{j=0}^{\infty} \mathrm{E} \|C_{j}\|^{\delta} < \infty \qquad \textrm{for some}  \ \delta \in (0, \alpha),
 \end{equation}
 and
\begin{equation}\label{e:mod1}
  \sum_{j=0}^{\infty}\mathrm{E}\|C_{j}\|^{\gamma} < \infty \qquad \textrm{for some} \ \gamma \in (\alpha, 1),
\end{equation}
 and if $\alpha \geq 1$ suppose
\begin{equation}\label{eq:infmaTK3}
\sum_{j=0}^{\infty} \mathrm{E}\|C_{j}\| < \infty.
\end{equation}
Suppose condition $(\ref{e:momcondr})$ holds also when $\alpha=1$.
Then, as $n \to \infty$,
$$ M_{n}(\,\cdot\,) \dto  M(\,\cdot\,)$$
in $D_{\uparrow}([0,1], \mathbb{R}^{d})$ endowed with the weak $M_{1}$ topology.
\end{thm}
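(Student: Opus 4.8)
The plan is to approximate the infinite order process by its finite order truncations, apply Theorem~\ref{t:FLT} to each truncation, and then control the truncation error uniformly in $n$. For $q \in \mathbb{N}$ let
\[
 X_{i}^{q} = \sum_{j=0}^{q} C_{j} Z_{i-j}, \qquad i \in \mathbb{Z},
\]
be the finite order linear process obtained by discarding the coefficients of index exceeding $q$, and let $M_{n}^{q}$ be its partial maxima process, defined as in (\ref{eq:defWn}) with $X_{i}$ replaced by $X_{i}^{q}$. By Theorem~\ref{t:FLT}, for each fixed $q$ we have $M_{n}^{q} \dto M^{q}$ as $n \to \infty$ in $D_{\uparrow}([0,1], \mathbb{R}^{d})$ with the weak $M_{1}$ topology, where $M^{q}$ is the process (\ref{e:limprocess}) built from the truncated coefficient maxima $D^{k,j}_{\pm,q} = \bigvee_{i=0}^{q} C^{\pm}_{i;k,j}$ in place of $D^{k,j}_{\pm}$, with the \emph{same} extremal processes $(M^{(j+)}, M^{(j-)})_{j=1,\ldots,d}$, since these depend only on the innovations through the Poisson process (\ref{e:BaTa1}) and are unaffected by truncation.

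First I would verify that $M^{q} \dto M$ as $q \to \infty$. The moment conditions force $\|C_{j}\| \to 0$ a.s., so $D^{k,j}_{\pm,q}$ increases to $D^{k,j}_{\pm} = \max\{C^{\pm}_{i;k,j} : i \geq 0\}$, which is therefore finite a.s.; hence the $2d^{2}$--dimensional coefficient vector of $M^{q}$ converges in distribution to that of $M$. Since in both $M^{q}$ and $M$ this coefficient vector is an independent copy sitting next to the common, independent family of extremal processes, the pairs converge jointly, and because $M$ is obtained from these ingredients by the (continuous) multiplication and maximum operations of Lemmas~\ref{l:contmultpl} and~\ref{l:contmax} (one factor in each product having constant, hence continuous, sample paths), the continuous mapping theorem yields $M^{q} \dto M$.

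The heart of the argument is showing the truncation error is asymptotically negligible, i.e.\ that for every $\epsilon > 0$
\[
 \lim_{q \to \infty} \limsup_{n \to \infty} \Pr[d_{p}(M_{n}, M_{n}^{q}) > \epsilon] = 0.
\]
Since $M_{1}$ convergence is dominated by uniform convergence, $d_{M_{1}}(x,y) \leq \|x-y\|_{[0,1]}$, and since maxima are $1$--Lipschitz, $|\bigvee_{i} a_{i} - \bigvee_{i} b_{i}| \leq \bigvee_{i}|a_{i}-b_{i}|$, for each coordinate $k$ one reduces the problem to the running maximum
\[
 \frac{1}{a_{n}} \bigvee_{i=1}^{n} \big|X_{i}^{(k)} - (X_{i}^{q})^{(k)}\big| \leq \frac{1}{a_{n}} \bigvee_{i=1}^{n} \sum_{j=q+1}^{\infty} \|C_{j}\|\,\|Z_{i-j}\|,
\]
using $\|C_{j}Z_{i-j}\| \leq \|C_{j}\|\,\|Z_{i-j}\|$. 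The task is then to prove that this tail-weighted maximum can be made small in probability \emph{uniformly in $n$} by choosing $q$ large. This is exactly where the moment conditions (\ref{e:momcondr}), (\ref{e:mod1}) and (\ref{eq:infmaTK3}) enter, via Potter bounds and Karamata's theorem; the split into the regimes $\alpha \in (0,1)$, $\alpha = 1$ and $\alpha > 1$ reflects the different moment control required. I would carry out these estimates through the technical lemmas of the Appendix, adapting the univariate computation in Krizmani\'{c}~\cite{Kr22-2}.

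Finally, combining $M_{n}^{q} \dto M^{q}$, $M^{q} \dto M$, and the negligibility of the truncation error, the approximation theorem for convergence in distribution (Billingsley~\cite{Bi68}, Theorem 4.2) gives $M_{n} \dto M$ in the weak $M_{1}$ topology. I expect the main obstacle to be the uniform-in-$n$ tail estimate of the third step: unlike in a single limit theorem, the contribution of infinitely many small coefficients to the running maximum of the heavy-tailed innovations must be bounded simultaneously for all $n$, and the borderline case $\alpha = 1$ (where both (\ref{e:momcondr}) and (\ref{eq:infmaTK3}) are assumed) is the most delicate.
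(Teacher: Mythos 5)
Your proposal is correct, and it follows the same overall skeleton as the paper (truncate, apply Theorem~\ref{t:FLT}, control the truncation error, close with a converging-together theorem), but the truncation itself — and hence the structure of the proof — differs in an essential way. You truncate naively, $X_i^q=\sum_{j=0}^q C_jZ_{i-j}$, so Theorem~\ref{t:FLT} gives limits $M^q$ built from the truncated coefficient maxima $D^{k,j}_{\pm,q}=\bigvee_{i=0}^q C^{\pm}_{i;k,j}$, and you then need the extra interchange step $M^q\dto M$ as $q\to\infty$; your argument for it (the moment conditions give $\|C_j\|\to 0$ a.s., so $D^{k,j}_{\pm,q}\uparrow D^{k,j}_{\pm}<\infty$ a.s., then independence from the extremal processes plus the continuity Lemmas~\ref{l:contmultpl} and~\ref{l:contmax} and the continuous mapping theorem) is sound, and you finish with Billingsley's triangular approximation theorem. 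The paper instead engineers the truncation so that this interchange step never arises: it sets $X_{i}^{m}=\sum_{j=0}^{m-2}C_{j}Z_{i-j}+C^{(m,\vee)}Z_{i-m+1}+C^{(m,\wedge)}Z_{i-m}$, where $C^{(m,\vee)}$ and $C^{(m,\wedge)}$ are the componentwise maximum and minimum of the tail coefficients $\{C_i: i\geq m-1\}$. This choice forces $D^{m,k,j}_{\pm}=D^{k,j}_{\pm}$ exactly, so every finite-order approximation has the \emph{same} limit $M$ as the full process, and the proof closes with the second converging-together theorem (Resnick's Theorem 3.5 — the same tool as your Billingsley citation, just applied with a constant sequence of limits). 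The trade-offs are complementary: the paper's trick eliminates your step $M^q\dto M$ but makes the error term $X_i-X_i^m$ involve the differences $C_{m-1;1,j}-C^{(m,\vee)}_{1,j}$ and $C_{m;1,j}-C^{(m,\wedge)}_{1,j}$, which must be bounded by tail sums $\sum_{l\geq m-1}|C_{l;1,j}|$ before the Appendix Lemmas~\ref{l:ALEA1} and~\ref{l:JTSA1} can be invoked; your error term is the cleaner tail series $\sum_{j>q}C_jZ_{i-j}$, which is a special case of those same estimates (take $A_{k,j}=C_{k;1,j}$ for $k>q$), so the technical work you defer to the Appendix does indeed go through, at the price of the additional limit-interchange step and the accompanying joint-convergence/CMT argument.
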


\begin{proof}
For $m \in \mathbb{N}$, $m \geq 2$, define
$$ X_{i}^{m} = \sum_{j=0}^{m-2}C_{j}Z_{i-j} + C^{(m,\vee)} Z_{i-m+1} + C^{(m, \wedge)} Z_{i-m}, \qquad i \in \mathbb{Z},$$
and
\begin{equation*}
M_{n,m}(t) = \left\{ \begin{array}{cc}
                                   \displaystyle \frac{1}{a_{n}} \bigvee_{i=1}^{\floor {nt}}X_{i}^{m} , & \quad  \displaystyle t  \in \Big[\frac{1}{n},1\Big],\\[1.7em]
                                   \displaystyle \frac{X_{1}^{m}}{a_{n}} , & \quad \displaystyle  t \in \Big[0, \frac{1}{n}\Big),
                                 \end{array}\right.
\end{equation*}
where $C^{(m, \vee)}= \max\{ C_{i} : i \geq m-1\}$ and $C^{(m, \wedge)}= \min\{ C_{i} : i \geq m-1\}$, with the maximum and minimum of matrices interpreted componentwise, i.e.~the $(k,j)$th entry of the matrix $C^{(m, \vee)}$ is $C^{(m, \vee)}_{k,j} = \max \{C_{i;k,j} : i \geq m-1\}$, and the $(k,j)$th entry of the matrix $C^{(m, \wedge)}$ is $C^{(m, \wedge)}_{k,j} = \min \{C_{i;k,j} : i \geq m-1\}$.
For $k,j \in \{1,\ldots,d\}$ define
\begin{equation*}
D^{m,k,j}_{+}= \bigg( \bigvee_{i=0}^{m-2} C_{i;k,j}^{+} \bigg) \vee C^{(m, \vee)+}_{k,j} \vee C^{(m, \wedge)+}_{k,j}
\end{equation*}
and
\begin{equation*}
D^{m,k,j}_{-}= \Big( \bigvee_{i=0}^{m-2} C_{i;k,j}^{-} \Big) \vee C^{(m, \vee)-}_{k,j} \vee C^{(m, \wedge)-}_{k,j}.
\end{equation*}
Since
$$ C^{(m, \vee)+}_{k,j} = \bigg( \bigvee_{i=m-1}^{\infty}C_{i;k,j} \bigg)^{+} = \bigvee_{i=m-1}^{\infty}C_{i;k,j}^{+}, \qquad C^{(m, \wedge)+}_{k,j} \leq \bigvee_{i=m-1}^{\infty}C_{i;k,j}^{+},$$
and
$$ C^{(m, \wedge)-}_{k,j} = \bigg( \bigwedge_{i=m-1}^{\infty}C_{i;k,j} \bigg)^{-} = \bigvee_{i=m-1}^{\infty}C_{i;k,j}^{-}, \qquad C^{(m, \vee)-}_{k,j} \leq \bigvee_{i=m-1}^{\infty}C_{i;k,j}^{-},$$
it holds that
$$ D^{m,k,j}_{+} = D^{k,j}_{+} \qquad \textrm{and} \qquad D^{m,k,j}_{-} = D^{k,j}_{-},$$
and therefore for the sequence of finite order linear processes $(X_{i}^{m})_{i}$ by Theorem~\ref{t:FLT} we obtain
$$ M_{n, m}(\,\cdot\,)  \dto M(\,\cdot\,) \qquad \textrm{as} \ n \to \infty,$$
in $D_{\uparrow}([0,1], \mathbb{R}^{d})$ with the weak $M_{1}$ topology.

If we show that for every $\epsilon >0$
\begin{equation}\label{e:Slutskyinf01}
 \lim_{m \to \infty} \limsup_{n \to \infty}\Pr[d_{p}(M_{n}, M_{n,  m})> \epsilon]=0,
\end{equation}
then by a generalization of Slutsky's theorem (see Theorem 3.5 in Resnick~\cite{Resnick07}) it will follow $M_{n} \dto M$ in $D_{\uparrow}([0,1], \mathbb{R}^{d})$ with the weak $M_{1}$ topology. According to the definition of the metric $d_{p}$ in (\ref{e:defdp}) it is enough to show
\begin{equation*}
 \lim_{m \to \infty} \limsup_{n \to \infty}\Pr[d_{M_{1}}(M_{n}^{(j)}, M_{n,m}^{(j)})> \epsilon]=0 \qquad \textrm{for every} \ j=1,\ldots,d,
\end{equation*}
and further, as in the proof of Theorem~\ref{t:FLT}, it is enough to show the last relation only for $j=1$.
Since the metric $d_{M_{1}}$ on $D_{\uparrow}([0,1], \mathbb{R})$ is bounded above by the uniform metric on $D_{\uparrow}([0,1], \mathbb{R})$, it suffices to show that
$$ \lim_{m \to \infty} \limsup_{n \to \infty}\Pr \bigg( \sup_{0 \leq t \leq 1}|M_{n}^{(1)}(t) - M_{n,m}^{(1)}(t)|> \epsilon \bigg)=0.$$
Now we treat separately the cases $\alpha \in (0,1)$ and $\alpha \in [1,\infty)$.\\[-0.2em]

Case $\alpha \in (0,1)$.
Recalling the definitions, we have
\begin{eqnarray}\label{e:unifm}
 \nonumber \Pr \bigg( \sup_{0 \leq t \leq 1}|M_{n}^{(1)}(t) - M_{n,m}^{(1)}(t)|> \epsilon \bigg) &\leq& \Pr \bigg( \bigvee_{i=1}^{n}\frac{|X_{i}^{(1)}-X_{i}^{m(1)}|}{a_{n}} > \epsilon \bigg)\\[0.5em]
  &\leq& \Pr \bigg( \sum_{i=1}^{n}\frac{|X_{i}^{(1)}-X_{i}^{m(1)}|}{a_{n}} > \epsilon \bigg).
\end{eqnarray}
Since
\begin{eqnarray}\label{e:alphafc}
\nonumber X_{i}^{(1)}-X_{i}^{m(1)} &=& \sum_{k=0}^{\infty} \sum_{j=1}^{d}C_{k;1,j}Z_{i-k}^{(j)} - \sum_{k=0}^{m-2} \sum_{j=1}^{d} C_{k;1,j}Z_{i-k}^{(j)} - \sum_{j=1}^{d} C_{1,j}^{(m,\vee)}Z_{i-m+1}^{(j)}\\[0.2em]
\nonumber && \hspace*{0.1em} - \sum_{j=1}^{d} C_{1,j}^{(m,\wedge)}Z_{i-m}^{(j)}\\[0.2em]
\nonumber &=& \sum_{k=m-1}^{\infty} \sum_{j=1}^{d}C_{k;1,j}Z_{i-k}^{(j)} - \sum_{j=1}^{d} C_{1,j}^{(m,\vee)}Z_{i-m+1}^{(j)} - \sum_{j=1}^{d} C_{1,j}^{(m,\wedge)}Z_{i-m}^{(j)}\\[0.2em]
 \nonumber &=& \sum_{j=1}^{d} \bigg(  \sum_{k=m+1}^{\infty} C_{k;1,j}Z_{i-k}^{(j)} + ( C_{m-1;1,j} - C_{1,j}^{(m,\vee)} ) Z_{i-m+1}^{(j)}\\[0.2em]
  && \hspace*{0.1em} + ( C_{m;1,j} - C_{1,j}^{(m,\wedge)} ) Z_{i-m}^{(j)}  \bigg),
\end{eqnarray}
and
$$ | C_{m-1;1,j} - C_{1,j}^{(m,\vee)} | \leq \sum_{l=m-1}^{\infty}|C_{l;1,j}|, \qquad | C_{m;1,j} - C_{1,j}^{(m,\wedge)} | \leq \sum_{l=m-1}^{\infty}|C_{l;1,j}|,$$
we have
\begin{eqnarray*}
  \sum_{i=1}^{n}|X_{i}^{(1)}-X_{i}^{m(1)}| & &\\[0.1em]
   & \hspace{-10em} \leq & \hspace*{-5em} \sum_{i=1}^{n} \sum_{j=1}^{d} \bigg(\sum_{k=m+1}^{\infty}|C_{k;1,j}|\,|Z_{i-k}^{(j)}| + \sum_{l=m-1}^{\infty}|C_{l;1,j}| ( |Z_{i-m+1}^{(j)}| + |Z_{i-m}^{(j)}|) \bigg)\\[0.1em]
    & \hspace{-10em} \leq & \hspace*{-5em} \sum_{j=1}^{d} \bigg[ \sum_{i=-\infty}^{0}|Z_{i-m}^{(j)}| \sum_{s=1}^{n}|C_{m-i+s;1,j}| + \bigg( 2 \sum_{l=m-1}^{\infty}|C_{l;1,j}| \bigg) \sum_{i=1}^{n+1} |Z_{i-m}^{(j)}| \bigg]\\[0.1em]
    & \hspace{-10em} \leq & \hspace*{-5em}  \sum_{j=1}^{d} \bigg[ \sum_{i=-\infty}^{0}|Z_{i-m}^{(j)}| \sum_{s=1}^{n}\|C_{m-i+s}\| + \bigg( 2 \sum_{l=m-1}^{\infty}\|C_{l}\| \bigg) \sum_{i=1}^{n+1} |Z_{i-m}^{(j)}| \bigg],\\[0.1em]
\end{eqnarray*}
where in the second inequality above we used a change of variables and rearrangement of sums. Therefore
 \begin{eqnarray*}
 \Pr \bigg( \sum_{i=1}^{n}\frac{|X_{i}^{(1)}-X_{i}^{m(1)}|}{a_{n}} > \epsilon \bigg) &&\\[0.2em]
  &\hspace*{-18em} \leq & \hspace*{-9em} \sum_{j=1}^{d} \Pr \bigg( \sum_{i=-\infty}^{0}|Z_{i-m}^{(j)}| \sum_{s=1}^{n}\|C_{m-i+s}\| + \bigg( 2 \sum_{l=m-1}^{\infty}\|C_{l}\| \bigg) \sum_{i=1}^{n+1} |Z_{i-m}^{(j)}| > \frac{\epsilon}{d} \bigg).
 \end{eqnarray*}
Conditions $(\ref{e:momcondr})$ and $(\ref{e:mod1})$ by Lemma~\ref{l:ALEA1} in Appendix imply
$$ \lim_{m \to \infty} \limsup_{n \to \infty}  \Pr \bigg( \sum_{i=-\infty}^{0}|Z_{i-m}^{(j)}| \sum_{s=1}^{n}\|C_{m-i+s}\| + \bigg( 2 \sum_{l=m-1}^{\infty}\|C_{l}\| \bigg) \sum_{i=1}^{n+1} |Z_{i-m}^{(j)}| > \frac{\epsilon}{d} \bigg) =0$$
for every $j=1,\ldots,d$, and hence from (\ref{e:unifm}) we obtain
$$ \lim_{m \to \infty} \limsup_{n \to \infty}  \Pr \bigg( \sup_{0 \leq t \leq 1}|M_{n}^{(1)}(t) - M_{n, m}^{(1)}(t)|> \epsilon \bigg)= 0,$$
which means that $M_{n} \dto M$ as $n \to \infty$ in $D_{\uparrow}([0,1], \mathbb{R}^{d})$ with the weak $M_{1}$ topology.\\[-0.7em]

Case $\alpha \in [1,\infty)$.
Define $$ A_{k,j} = \left\{ \begin{array}{cl}
                                   C_{k;1,j}-C_{1,j}^{(m,\vee)}, & \quad \textrm{if} \ k=m-1,\\[0.5em]
                                   C_{k;1,j}-C_{1,j}^{(m,\wedge)}, & \quad \textrm{if} \ k=m,\\[0.5em]
                                   C_{k;1,j}, & \quad \textrm{if} \ k \geq m+1,
                                 \end{array}\right.$$
for $k \geq m-1$ and $j \in \{1,\ldots,d\}$, and note that by (\ref{e:alphafc}), for $t \in [0,1]$ we have
\begin{eqnarray*}
\nonumber  |M_{n}^{(1)}(t) - M_{n,m}^{(1)}(t)| & \leq &  \bigvee_{i=1}^{\floor{nt}} \frac{|X_{i}^{(1)}-X_{i}^{m(1)}|}{a_{n}}  \\[0.1em]
& \hspace*{-17em} = & \hspace*{-8.5em} \bigvee_{i=1}^{\floor{nt}}  \sum_{j=1}^{d} \bigg|  \sum_{k=m+1}^{\infty} C_{k;1,j}\frac{Z_{i-k}^{(j)}}{a_{n}} + \Big( C_{m-1;1,j} - C_{1,j}^{(m,\vee)} \Big) \frac{Z_{i-m+1}^{(j)}}{a_{n}} + \Big( C_{m;1,j} - C_{1,j}^{(m,\wedge)} \Big) \frac{Z_{i-m}^{(j)}}{a_{n}}  \bigg|\\[0.1em]
& \hspace*{-17em} = & \hspace*{-8.5em} \bigvee_{i=1}^{\floor{nt}}  \sum_{j=1}^{d} \bigg|  \sum_{k=m-1}^{\infty} A_{k,j}\frac{Z_{i-k}^{(j)}}{a_{n}}   \bigg|.
\end{eqnarray*}
Therefore
$$\Pr \bigg( \sup_{0 \leq t \leq 1}|M_{n}^{(1)}(t) - M_{n,m}^{(1)}(t)|> \epsilon \bigg) \leq \Pr \bigg( \bigvee_{i=1}^{n}  \sum_{j=1}^{d} \bigg|  \sum_{k=m-1}^{\infty} \frac{A_{k,j}Z_{i-k}^{(j)}}{a_{n}}  \bigg| > \epsilon \bigg),$$
and Lemma~\ref{l:JTSA1} in Appendix yields
$$ \lim_{m \to \infty} \limsup_{n \to \infty}  \Pr \bigg( \sup_{0 \leq t \leq 1}|M_{n}^{(1)}(t) - M_{n, m}^{(1)}(t)|> \epsilon \bigg)= 0.$$
We conclude that in this case also $M_{n} \dto M$ as $n \to \infty$ in $D_{\uparrow}([0,1], \mathbb{R}^{d})$ with the weak $M_{1}$ topology.
\end{proof}

\begin{rem}\label{r:determc}
When the sequence of coefficients $(C_{i})$ is deterministic, the limiting process $M$ in Theorem~\ref{t:infFLT} has the following representation
$$M(t) = \bigvee_{T_{i} \leq t}P_{i}S_{i}, \qquad t \in [0,1],$$
where $S_{i}=(S_{i}^{(1)}, \ldots, S_{i}^{(d)})$, with
$$ S_{i}^{(k)} = \bigvee_{j=1}^{d}(D_{+}^{k,j}Q_{i}^{(j)+} \vee D_{-}^{k,j}Q_{i}^{(j)-}) \qquad \textrm{for} \ k=1,\ldots,d.$$
By Propositions 5.2 and 5.3 in Resnick~\cite{Resnick07} the point process $\sum_{i}\delta_{(T_{i}, P_{i}S_{i})}$ is a Poisson process with intensity measure $Leb \times \rho$, where for $x \in [0,\infty)^{d}$, $x \neq 0$,
\begin{equation*}
\rho([[0,x]]^{c}) = \int_{0}^{\infty} \Pr \bigg(y \bigvee_{k=1}^{d}\frac{S_{1}^{(k)}}{x^{(k)}} >1 \bigg)\,\alpha y^{-\alpha-1}\,\rmd y.
\end{equation*}
Therefore $M$ is an extremal process with exponent measure $\rho$. Similarly, the components $M^{(k)}$ of $M$ are extremal processes with exponent measures $\rho_{k}$ respectively, where for $z>0$
\begin{equation*}
\rho_{k}((z,\infty)) = \int_{0}^{\infty} \Pr (y S_{1}^{(k)} > z )\,\alpha y^{-\alpha-1}\,\rmd y = \mathrm{E}(S_{1}^{(k)})^{\alpha}z^{-\alpha},
\end{equation*}
where the last equality holds by a change of variables and Fubini's theorem. Therefore
$\rho_{k}(\rmd y)=\mathrm{E}(S_{1}^{(k)})^{\alpha}\alpha y^{-\alpha-1} \rmd y$ for $y>0$. For deterministic coefficients condition (\ref{e:mod1}) can be dropped since it is implied by (\ref{e:momcondr}) (see Krizmani\'{c}~\cite{Kr22-2}, Remark 3.4).
\end{rem}

\begin{rem}
A special case of multivariate linear processes studied in this paper is
$$ X_{i} = \sum_{j=0}^{\infty}B_{j}Z_{i-j}, \qquad i \in \mathbb{Z},$$
 where $(Z_{i})_{i \in \mathbb{Z}}$ is a strictly stationary sequence of regularly varying $\mathbb{R}^{d}$--valued random vectors with index $\alpha>0$, and $(B_{i})_{i \geq 0}$ is a sequence of random variables
 independent of $(Z_{i})$. To obtain this linear process from the general one in (\ref{e:infLP}) take
 $$C_{i;k,j}= \left\{ \begin{array}{cl}
  B_{i}, & \quad \textrm{if} \ k=j,\\[0.4em]
  0, & \quad \textrm{if} \ k \neq j,
\end{array}\right.$$
for $i \geq 0$ and $k,j \in \{1,\ldots,d\}$. In this case the limiting process $M$ in Theorem~\ref{t:infFLT} reduces to
$$M(t) = \Big(\widetilde{D}^{k,k}_{+}M^{(k+)}(t) \vee \widetilde{D}^{k,k}_{-}M^{(k-)}(t) \Big)_{k=1,\ldots,d} =  \Big(\widetilde{B}_{+}M^{(k+)}(t) \vee \widetilde{B}_{-}M^{(k-)}(t) \Big)_{k=1,\ldots,d}$$
for $t \in [0,1]$, where $(\widetilde{B}_{+}, \widetilde{B}_{-})$ is a two dimensional random vector, independent of $(M^{(k+)}, M^{(k-)})^{*}_{k=1,\ldots,d}$, such that $(\widetilde{B}_{+}, \widetilde{B}_{-}) \eind (\bigvee_{i \geq 0}B_{i}^{+}, \bigvee_{i \geq 0}B_{i}^{-})$. By an application of Propositions 5.2 and 5.3 in Resnick~\cite{Resnick07} we can represent $M$ in the form $M(t)=\widetilde{B}_{+}M_{+}(t) \vee \widetilde{B}_{-}M_{-}(t)$ for $t \in [0,1]$, where $M_{+}(t)=(M^{(k+)})_{k=1,\ldots,d}$ and $M_{-}(t)=(M^{(k-)})_{k=1,\ldots,d}$ are extremal processes with exponent measures $\nu_{+}$ and $\nu_{-}$ respectively, where for $x \in [0,\infty)^{d}$, $x \neq 0$,
$$\nu_{+}([[0,x]]^{c}) = \int_{0}^{\infty} \Pr \bigg(y \bigvee_{k=1}^{d}\frac{Q_{1}^{(k)+}}{x^{(k)}} > 1 \bigg)\,\alpha y^{-\alpha-1}\,\rmd y$$
and
$$\nu_{-}([[0,x]]^{c}) = \int_{0}^{\infty} \Pr \bigg(y \bigvee_{k=1}^{d}\frac{Q_{1}^{(k)-}}{x^{(k)}} > 1 \bigg)\,\alpha y^{-\alpha-1}\,\rmd y.$$
Taking into account the considerations in Remark~\ref{r:determc} we see that in the univariate case (when $d=1$) the measures $\nu_{+}$ and $\nu_{-}$ are of the form
$$ \nu_{+}(\rmd y)=\mathrm{E}(Q_{1}^{+})^{\alpha}\alpha y^{-\alpha-1} \rmd y \qquad \textrm{and} \qquad \nu_{-}(\rmd y)=\mathrm{E}(Q_{1}^{-})^{\alpha}\alpha y^{-\alpha-1} \rmd y$$
for $y>0$, with
$$ \mathrm{E}(Q_{1}^{+})^{\alpha} = \Pr(Q_{1} >0) = \lim_{x \to \infty}\frac{\Pr(Z_{1}>x)}{\Pr(|Z_{1}|>x)}$$
and
$$ \mathrm{E}(Q_{1}^{-})^{\alpha} = \Pr(Q_{1} <0) = \lim_{x \to \infty}\frac{\Pr(Z_{1}<-x)}{\Pr(|Z_{1}|>x)},$$
which follows from the definitions of $Q_{1}$ and the tail process of the sequence $(Z_{i})$. This is the same form of the limiting process $M$ as in the i.i.d.~case obtained in Krizmani\'{c}~\cite{Kr22-2}.
\end{rem}

In the following example we show that the functional convergence in the weak $M_{1}$ topology in Theorems~\ref{t:FLT} and~\ref{t:infFLT} in general can not be replaced by convergence in the stronger standard $M_{1}$ topology.

\begin{exmp}
Let $(T_i)_{i \in \mathbb{Z}}$ be a sequence of i.i.d.~unit Fr\'{e}chet random variables, i.e.~$\Pr(T_{i} \leq x) = e^{-1/x}$ for $x>0$. Hence $T_{i}$ is non-negative and regularly varying with index $\alpha=1$. Take a sequence of positive real numbers $(a_{n})$ such that
$n \Pr (T_{1}>a_{n}) \to 1/2$ as $n \to \infty$. Let
$$
Z_i = (T_{2i-1}, T_{2i}), \qquad {i \in \mathbb{Z}}.
$$
Stationarity and independence in the sequence $(T_{i})$ yield
$$ n \Pr(\|Z_{1}\| > a_{n}) = 2n \Pr(T_{1}> a_{n}) - n [\Pr(T_{1} > a_{n})]^{2},$$
and this implies
$n \Pr (\|Z_{1}\|>a_{n}) \to 1$ as $n \to \infty$.
Proposition~\ref{p:jrvai} implies that every $Z_{i}$ is regularly varying with index $\alpha=1$, and also that the random process $(Z_{i})_{i \in \mathbb{Z}}$ is jointly regularly varying. Since $(Z_{i})$ is a sequence of i.i.d.~regularly varying random vectors, it follows immediately that relation (\ref{e:asyind}) and Conditions~\ref{c:mixcond1} and~\ref{c:mixcond2} hold. Relation (\ref{e:asyindcomp}) holds since the components of each $Z_{i}$ are independent and regularly varying. Therefore $(Z_{i})$ satisfies all the conditions of Theorem~\ref{t:FLT}, and the partial maxima process $M_{n}(\,\cdot\,)$ of the linear process
$$X_{i}=C_{0}Z_{i} + C_{1}Z_{i-1}, \qquad i \in \mathbb{Z},$$
with
$$ C_{0}=\left(
           \begin{array}{cc}
             1 & 1 \\
             0 & 0 \\
           \end{array}
         \right) \qquad \textrm{and} \qquad C_{1}=\left(
           \begin{array}{cc}
             0 & 0 \\
             1 & 1 \\
           \end{array}
         \right),$$
converges in distribution in $D_{\uparrow}([0,1], \mathbb{R}^{2})$ with the weak $M_{1}$ topology.

Next we show that $M_{n}(\,\cdot\,)$ does not converge in distribution under the standard $M_{1}$ topology on $D_{\uparrow}([0,1], \mathbb{R}^{2})$. This shows that the weak $M_{1}$ topology in Theorems~\ref{t:FLT} and~\ref{t:infFLT} in general cannot be replaced by the standard $M_{1}$ topology.
Let
 $$ V_{n}(t) = M_{n}^{(1)}(t) - M_{n}^{(2)}(t), \qquad t \in [0,1],$$
where
$$ M_{n}^{(1)}(t) = \bigvee_{i=1}^{\floor{nt}} \frac{Z_{i}^{(1)}+Z_{i}^{(2)}}{a_{n}}  = \bigvee_{i=1}^{\floor{nt}} \frac{T_{2i-1}+T_{2i}}{a_{n}}$$
and
 $$ M_{n}^{(2)}(t) = \bigvee_{i=1}^{\floor{nt}} \frac{Z_{i-1}^{(1)}+Z_{i-1}^{(2)}}{a_{n}} =\bigvee_{i=1}^{\floor{nt}} \frac{T_{2i-3}+T_{2i-2}}{a_{n}} .$$
The first step is to show that $V_{n}(\,\cdot\,)$ does not converge in distribution in $D([0,1], \mathbb{R})$ endowed with the standard $M_{1}$ topology. According to Skorokhod~\cite{Sk56} (see also Avram and Taqqu~\cite{AvTa92}, Proposition 2) it suffices to show that
 \begin{equation}\label{e:osc1}
 \lim_{\delta \to 0} \limsup_{n \to \infty} \Pr ( \omega_{\delta}(V_{n}) > \epsilon ) > 0
 \end{equation}
 for some $\epsilon >0$, where
 $$ \omega_{\delta}(x) = \sup_{{\footnotesize \begin{array}{c}
                                t_{1} \leq t \leq t_{2} \\
                                0 \leq t_{2}-t_{1} \leq \delta
                              \end{array}}
} M(x(t_{1}), x(t), x(t_{2}))$$
($x \in D([0,1], \mathbb{R}), \delta >0)$ and
$$ M(x_{1},x_{2},x_{3}) = \left\{ \begin{array}{ll}
                                   0, & \ \ \textrm{if} \ x_{2} \in [x_{1}, x_{3}], \\[0.3em]
                                   \min\{ |x_{2}-x_{1}|, |x_{3}-x_{2}| \}, & \ \ \textrm{otherwise}.
                                 \end{array}\right.$$
Denote by $i'=i'(n)$ the index at which $\max_{1 \leq i \leq n-1}T_{i}$ is obtained. Fix $\epsilon >0$ and let
 $$A_{n,\epsilon} = \{ T_{i'} > \epsilon a_{n} \} = \Big\{ \max_{1 \leq i \leq n-1}T_{i} > \epsilon a_{n}\Big\}$$
 and
 $$ B_{n,\epsilon} = \{T_{i'}>\epsilon a_{n} \ \textrm{and} \ \exists\,k \in \{-i'-1,\ldots,3\} \setminus \{0\}
 \ \textrm{such that} \ T_{i'+k} > \epsilon a_{n} / 8 \}.$$
 From the regular variation property of $T_{1}$ we obtain $\lim_{n \to \infty} n
 \Pr(T_{1}> c a_{n}) = (2c)^{-1}$ for $c>0$, and this together with the fact that $(T_{i})$ is an i.i.d.~sequence yield
 \begin{equation}\label{e:limAn}
  \lim_{n \to \infty}\Pr(A_{n,\epsilon}) = 1- \lim_{n \to \infty} \bigg( 1- \frac{n\Pr(T_{1}>\epsilon a_{n})}{n} \bigg)^{n-1} = 1 - e^{-(2\epsilon)^{-1}}
  \end{equation}
and
 \begin{eqnarray}\label{e:limBn}
 \nonumber \limsup_{n \to \infty} \Pr(B_{n,\epsilon})  &\leq&  \limsup_{n \to \infty} \sum_{i=1}^{n-1} \sum_{\footnotesize \begin{array}{c}
                                k=-n \\
                                k \neq 0
                              \end{array}}^{3}  \Pr(T_{i}>\epsilon a_{n}, T_{i+k} > \epsilon a_{n}/8)\\[0.4em]
 \nonumber & \leq &  \limsup_{n \to \infty}\,(n-1)(n+3) \Pr(T_{1}> \epsilon a_{n}) \Pr(T_{1} > \epsilon a_{n}/8)\\[0.4em]
 &=& 2 \epsilon^{-2}.
 \end{eqnarray}
Note that on the event $A_{n,\epsilon} \setminus B_{n,\epsilon}$ it holds that $T_{i'}> \epsilon a_{n}$ and $T_{i'+k} \leq \epsilon a_{n}/8$ for every $k \in \{-i'-1,\ldots,3\} \setminus \{0\}$.
Now we distinguish two cases.
\begin{itemize}
  \item[(i)] $i'$ is an even number. Then $i'=2i^{*}$ for some integer $i^{*}$. Observe that on the set $A_{n,\epsilon} \setminus B_{n,\epsilon}$ we have
  $$ M_{n}^{(1)} \Big( \frac{i^{*}}{n} \Big) = \frac{T_{i'-1}+T_{i'}}{a_{n}} > \epsilon \quad \textrm{and} \quad  M_{n}^{(2)} \Big( \frac{i^{*}}{n} \Big) = \bigvee_{i=1}^{i^{*}} \frac{T_{2i-3}+T_{2i-2}}{a_{n}} \leq \frac{\epsilon}{4},$$
  and similarly
   $$M_{n}^{(1)} \Big( \frac{i^{*}-1}{n} \Big) \leq \frac{\epsilon}{4} \quad \textrm{and} \quad  M_{n}^{(2)} \Big( \frac{i^{*}-1}{n} \Big)  \leq \frac{\epsilon}{4}.$$ This implies
   $$V_{n} \Big( \frac{i^{*}}{n} \Big) = M_{n}^{(1)} \Big( \frac{i^{*}}{n} \Big) - M_{n}^{(2)} \Big( \frac{i^{*}}{n} \Big) > \frac{3\epsilon}{4},$$
   and
   $$V_{n} \Big( \frac{i^{*}-1}{n} \Big) = M_{n}^{(1)} \Big( \frac{i^{*}-1}{n} \Big) - M_{n}^{(2)} \Big( \frac{i^{*}-1}{n} \Big) \in \Big[ -\frac{\epsilon}{4},  \frac{\epsilon}{4} \Big].$$
   Further, on the set $A_{n,\epsilon} \setminus B_{n,\epsilon}$ it holds that
   $$ M_{n}^{(1)} \Big( \frac{i^{*}+1}{n} \Big) = \frac{T_{i'-1}+T_{i'}}{a_{n}}  \quad \textrm{and} \quad  M_{n}^{(2)} \Big( \frac{i^{*}+1}{n} \Big) = \frac{T_{i'-1}+T_{i'}}{a_{n}},$$
   which yields
   $$ V_{n}\Big( \frac{i^{*}+1}{n} \Big) = 0.$$
   \item[(ii)] $i'$ is an odd number. Then $i'=2i^{*}-1$ for some integer $i^{*}$. Similarly as in (i)
   on the event $A_{n,\epsilon} \setminus B_{n,\epsilon}$ one obtains
$$ V_{n} \Big( \frac{i^{*}}{n} \Big)  > \frac{3 \epsilon}{4}, \quad V_{n} \Big( \frac{i^{*}-1}{n} \Big) \in \Big[ - \frac{\epsilon}{4}, \frac{\epsilon}{4} \Big] \quad \textrm{and} \quad V_{n} \Big( \frac{i^{*}+1}{n} \Big) = 0.$$
\end{itemize}
Hence on the set $A_{n,\epsilon} \setminus B_{n,\epsilon}$ we have
\begin{equation}\label{e:inc1}
  \Big| V_{n} \Big( \frac{i^{*}}{n} \Big) - V_{n} \Big( \frac{i^{*}-1}{n} \Big) \Big|  > \frac{3 \epsilon}{4} - \frac{\epsilon}{4} = \frac{\epsilon}{2}
\end{equation}
and
\begin{equation}\label{e:inc2}
  \Big| V_{n} \Big( \frac{i^{*}+1}{n} \Big) - V_{n} \Big( \frac{i^{*}}{n} \Big) \Big| > \frac{3 \epsilon}{4}.
\end{equation}
Note that on the set $A_{n,\epsilon} \setminus B_{n,\epsilon}$ one also has
$$ V_{n} \Big( \frac{i^{*}}{n} \Big) \notin \Big[ V_{n} \Big( \frac{i^{*}-1}{n} \Big), V_{n} \Big( \frac{i^{*}+1}{n} \Big) \Big],$$
and therefore taking into account (\ref{e:inc1}) and (\ref{e:inc2}) we obtain
\begin{eqnarray*}
  \omega_{2/n}(V_{n}) & = & \sup_{{\footnotesize \begin{array}{c}
                                t_{1} \leq t \leq t_{2} \\
                                0 \leq t_{2}-t_{1} \leq 2/n
                              \end{array}}
} M(V_{n}(t_{1}), V_{n}(t), V_{n}(t_{2})) \\[0.8em]
   & \geq & M \Big( V_{n} \Big( \frac{i^{*}-1}{n} \Big), V_{n} \Big( \frac{i^{*}}{n} \Big), V_{n} \Big( \frac{i^{*}+1}{n} \Big) \Big) > \frac{\epsilon}{2}
\end{eqnarray*}
on the event $A_{n,\epsilon} \setminus B_{n,\epsilon}$. Therefore, since $\omega_{\delta}(\,\cdot\,)$ is nondecreasing in $\delta$, it holds that
 \begin{eqnarray}\label{e:oscM1}
  \nonumber \liminf_{n \to \infty} \Pr(A_{n,\epsilon} \setminus B_{n,\epsilon}) & \leq & \liminf_{n \to \infty}
 \Pr ( \omega_{2/n} (V_{n}) >  \epsilon /2 )\\[0.4em]
 & \leq &   \lim_{\delta \to 0} \limsup_{n \to \infty}  \Pr ( \omega_{\delta} (V_{n}) >  \epsilon/2 ).
 \end{eqnarray}
Since $x^{2}(1-e^{-(2x)^{-1}})$ tends to infinity as $x \to \infty$, we can find $\epsilon >0$ such that  $\epsilon^{2}(1-e^{-(2\epsilon)^{-1}}) > 2$, that is $ 1-e^{-(2\epsilon)^{-1}} > 2 \epsilon^{-2}$.
 For this $\epsilon$, by relations (\ref{e:limAn}) and (\ref{e:limBn}), we have
 $$\lim_{n \to \infty} \Pr(A_{n,\epsilon}) > \limsup_{n \to \infty} \Pr(B_{n,\epsilon}),$$
 i.e.
 $$ \liminf_{n \to \infty} \Pr(A_{n,\epsilon} \setminus B_{n,\epsilon}) \geq \lim_{n \to \infty}\Pr(A_{n,\epsilon}) - \limsup_{n \to \infty} \Pr(B_{n,\epsilon}) >0.$$
Therefore by (\ref{e:oscM1}) we obtain
$$ \lim_{\delta \to 0} \limsup_{n \to \infty}  \Pr ( \omega_{\delta} (V_{n}) >  \epsilon/2) > 0$$
and (\ref{e:osc1}) holds, and $V_{n}(\,\cdot\,)$ does not converge in distribution in $D([0,1], \mathbb{R})$ with the standard $M_{1}$ topology.

 To finish, if $M_{n}(\,\cdot\,)$ would converge in distribution in the standard $M_{1}$ topology on $D_{\uparrow}([0,1], \mathbb{R}^{2})$, and then also on $D([0,1], \mathbb{R}^{2})$, using the fact that linear combinations of the coordinates are continuous in the same topology (see Whitt~\cite{Whitt02}, Theorems 12.7.1 and 12.7.2) and the continuous mapping theorem, we would obtain that $V_{n}(\,\cdot\,) = M_{n}^{(1)}(\,\cdot\,) - M_{n}^{(1)}(\,\cdot\,)$ converges in $D([0,1], \mathbb{R})$ with the standard $M_{1}$ topology, which is impossible, as is shown above.
\end{exmp}

\section{Appendix}

We provide some technical results used in the proof of Theorem~\ref{t:infFLT}. These results are based on the corresponding results in Krizmani\'{c}~\cite{Kr19}, \cite{Kr22-2} for functional convergence of partial sums and maxima of univariate linear processes with random coefficients and heavy-tailed innovations.

Let $(Z_{i})_{i \in \mathbb{Z}}$ be a strictly stationary sequence of regularly varying $\mathbb{R}^{d}$--valued random vectors with index $\alpha>0$, and $(C_{i})_{i \geq 0}$ a sequence of random $d \times d$ matrices
 independent of $(Z_{i})$. Let $(a_{n})$ be a sequence of positive real numbers satisfying $(\ref{eq:niz})$.

\begin{lem}\label{l:ALEA1}
Let $\alpha \in (0,1)$, and assume Conditions $(\ref{e:momcondr})$ and $(\ref{e:mod1})$ hold. Then for every $j \in \{1,\ldots,d\}$ and $\epsilon >0$,
$$ \lim_{m \to \infty} \limsup_{n \to \infty} \Pr \bigg[ \sum_{i=-\infty}^{0}\frac{|Z_{i-m}^{(j)}|}{a_{n}} \sum_{s=1}^{n}\|C_{m-i+s}\| + \bigg( 2 \sum_{l=m-1}^{\infty}\|C_{l}\| \bigg) \sum_{i=1}^{n+1}\frac{|Z_{i-m}^{(j)}|}{a_{n}} > \epsilon \bigg] =0.$$
\end{lem}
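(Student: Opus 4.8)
The plan is to split the expression inside the probability into its two summands and show each is negligible in the sense of $\lim_{m\to\infty}\limsup_{n\to\infty}$. Writing $A_n^m=\sum_{i=-\infty}^{0}a_n^{-1}|Z_{i-m}^{(j)}|\sum_{s=1}^{n}\|C_{m-i+s}\|$ for the first summand and $B_n^m=(2\sum_{l\ge m-1}\|C_l\|)\sum_{i=1}^{n+1}a_n^{-1}|Z_{i-m}^{(j)}|$ for the second, it suffices to prove $\lim_m\limsup_n\Pr(A_n^m>\epsilon/2)=0$ and the analogous statement for $B_n^m$, since $\Pr(A_n^m+B_n^m>\epsilon)\le\Pr(A_n^m>\epsilon/2)+\Pr(B_n^m>\epsilon/2)$.

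The key device for each summand is to truncate the innovations at the level $a_n$: writing $|Z_{i-m}^{(j)}|=|Z_{i-m}^{(j)}|1_{\{|Z_{i-m}^{(j)}|\le a_n\}}+|Z_{i-m}^{(j)}|1_{\{|Z_{i-m}^{(j)}|>a_n\}}$ splits each summand into a ``small'' and a ``large'' part. On the small part I would apply Markov's inequality with the exponent $\gamma\in(\alpha,1)$ from $(\ref{e:mod1})$, and on the large part with the exponent $\delta\in(0,\alpha)$ from $(\ref{e:momcondr})$. In both cases subadditivity of $x\mapsto x^{\theta}$ for $\theta\in(0,1]$ turns the $\theta$-th moment of a sum into a sum of $\theta$-th moments, while independence of $(C_i)$ and $(Z_i)$ together with stationarity factorizes the expectations. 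The reason for matching the truncation side to the exponent is that Karamata's theorem gives, uniformly in the index and using $|Z_0^{(j)}|\le\|Z_0\|$ together with $n\Pr(\|Z_0\|>a_n)\to1$, the estimates $\mathrm{E}[(|Z_0^{(j)}|1_{\{|Z_0^{(j)}|\le a_n\}})^{\gamma}]=O(a_n^{\gamma}/n)$ and $\mathrm{E}[|Z_0^{(j)}|^{\delta}1_{\{|Z_0^{(j)}|>a_n\}}]=O(a_n^{\delta}/n)$. After dividing by $a_n^{\gamma}$ (resp.\ $a_n^{\delta}$) each truncated innovation moment is therefore $O(1/n)$.

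For $B_n^m$ this closes the argument immediately: the coefficient factor contributes $\mathrm{E}[(\sum_{l\ge m-1}\|C_l\|)^{\theta}]\le\sum_{l\ge m-1}\mathrm{E}\|C_l\|^{\theta}$, the $n+1$ innovations each contribute $O(1/n)$, so the product is $O(1)\cdot\sum_{l\ge m-1}\mathrm{E}\|C_l\|^{\theta}$, which tends to $0$ upon letting first $n\to\infty$ and then $m\to\infty$, since $\sum_l\mathrm{E}\|C_l\|^{\gamma}$ and $\sum_l\mathrm{E}\|C_l\|^{\delta}$ converge by $(\ref{e:mod1})$ and $(\ref{e:momcondr})$ (this also gives $\sum_l\|C_l\|<\infty$ a.s.). The summand $A_n^m$ is handled the same way, with one extra bookkeeping step: substituting $t=m-i+s$ turns the double coefficient sum into $\sum_{t\ge m+1}\min(t-m,n)\,\mathrm{E}\|C_t\|^{\theta}\le n\sum_{t\ge m+1}\mathrm{E}\|C_t\|^{\theta}$, and this factor $n$ is exactly what cancels the $O(1/n)$ from each truncated innovation, again leaving $O(1)\cdot\sum_{t\ge m+1}\mathrm{E}\|C_t\|^{\theta}\to0$.

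The main obstacle, and the reason the argument must be set up this carefully, is that a single-exponent estimate fails: since $a_n$ is regularly varying of index $1/\alpha$, one has $n/a_n^{\delta}\to\infty$, so bounding the whole expression by its $\delta$-th moment blows up, whereas $\mathrm{E}|Z_0^{(j)}|^{\gamma}=\infty$ forbids a pure $\gamma$-moment bound. The truncation reconciles these by producing on each side of $a_n$ a truncated innovation moment of the correct order $O(a_n^{\theta}/n)$, whose $O(1/n)$ cancels against the combinatorial factor $n$ coming from the coefficient windows. The hypothesis $\gamma>\alpha$ in $(\ref{e:mod1})$ is precisely what makes the small-part $\gamma$-moment finite and of order $a_n^{\gamma}/n$, which is why this condition is imposed in the regime $\alpha\in(0,1)$.
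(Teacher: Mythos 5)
Your proposal is correct and follows essentially the same route as the paper's proof: truncate the innovations at level $a_n$, apply Markov's inequality with exponent $\gamma\in(\alpha,1)$ to the truncated part and exponent $\delta\in(0,\alpha)$ to the tail part, use subadditivity of $x\mapsto x^{\theta}$ and independence of $(C_i)$ and $(Z_i)$ to factorize, and invoke Karamata's theorem so that the $O(1/n)$ per-innovation bound cancels the factor $n$ from the coefficient sums, leaving tail sums $\sum_{l\geq m-1}\mathrm{E}\|C_l\|^{\gamma}+\sum_{l\geq m-1}\mathrm{E}\|C_l\|^{\delta}\to 0$. The only (immaterial) differences are that you split the two summands before truncating and truncate at the component level $|Z^{(j)}|$ rather than at the norm level $\|Z\|$ as the paper does, which costs one extra line (bounding $|Z_0^{(j)}|1_{\{|Z_0^{(j)}|\le a_n\}}$ by $\|Z_0\|1_{\{\|Z_0\|\le a_n\}}+a_n1_{\{\|Z_0\|>a_n\}}$) but changes nothing.
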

\begin{proof}
Let
$$  A^{n,m}_{i} =  \left\{ \begin{array}{cl}
                                   \displaystyle \sum_{s=1}^{n} \|C_{m-i+s}\|, & \quad  i \leq 0,\\[1em]
                                   \displaystyle 2 \sum_{l=m-1}^{\infty}\|C_{l}\|, & \quad i=1,\ldots,n+1.
                                 \end{array}\right.$$
We have to show
\be\label{eq:Oiqn1}
\lim_{m \to \infty} \limsup_{n \to \infty} \Pr \bigg(\sum_{i=-\infty}^{n+1} \frac{A^{n,m}_{i}|Z_{i-m}^{(j)}|}{a_{n}} > \epsilon \bigg)=0.
\ee
Let
$$ Z^{(j)\leq}_{i,n} = \frac{Z_{i}^{(j)}}{a_{n}} 1_{\big\{ \frac{\|Z_{i}\|}{a_{n}} \leq 1 \big\}} \qquad \textrm{and} \qquad
Z^{(j)>}_{i,n} = \frac{Z_{i}^{(j)}}{a_{n}} 1_{\big\{ \frac{\|Z_{i}\|}{a_{n}} > 1 \big\}},$$
and note that the probability in (\ref{eq:Oiqn1}) is bounded above by
\begin{equation*}\label{e:Diqn1}
\Pr \bigg(\sum_{i=-\infty}^{n+1} A^{n,m}_{i} |Z^{(j)\leq}_{i-m,n}| > \frac{\epsilon}{2} \bigg) +
\Pr \bigg(\sum_{i=-\infty}^{n+1} A^{n,m}_{i} |Z^{(j)>}_{i-m,n}| > \frac{\epsilon}{2} \bigg).
\end{equation*}
Now using the same arguments from the proof of Lemma 3.1 in Krizmani\'{c}~\cite{Kr19} we obtain
\begin{equation}\label{e:Diqn2}
   \Pr \bigg(\sum_{i=-\infty}^{n+1} A^{n,m}_{i} |Z^{(j)\leq}_{i-m,n}| > \frac{\epsilon}{2} \bigg)  \leq   (2^{\gamma}+1) \Big(\frac{\epsilon}{2} \Big)^{-\gamma} (n+1) \mathrm{E}|Z^{(j)\leq}_{1,n}|^{\gamma}  \sum_{l=m-1}^{\infty} \mathrm{E} \|C_{l}\|^{\gamma}
\end{equation}
and
\begin{equation}\label{e:Diqn3}
   \Pr \bigg(\sum_{i=-\infty}^{n+1} A^{n,m}_{i} |Z^{(j)>}_{i-m,n}| > \frac{\epsilon}{2} \bigg)
   \leq (2^{\delta}+1) \Big(\frac{\epsilon}{2} \Big)^{-\delta} (n+1) \mathrm{E}|Z^{(j)>}_{1,n}|^{\delta}  \sum_{l=m-1}^{\infty} \mathrm{E} \|C_{l}\|^{\delta}.
\end{equation}
By Karamata's theorem and (\ref{eq:niz})
$$ \limsup_{n \to \infty}\,(n+1)\mathrm{E}|Z^{(j)\leq}_{1,n}|^{\gamma} \leq \limsup_{n \to \infty} \frac{\mathrm{E}(\|Z_{1}\|^{\gamma}1_{\{ \|Z_{1}\| \leq a_{n}\}})}{a_{n}^{\gamma} \Pr(\|Z_{1}\| > a_{n})}\,(n+1) \Pr(\|Z_{1}\| > a_{n}) = \frac{\alpha}{\gamma - \alpha},$$
and
$$ \limsup_{n \to \infty}\,(n+1)\mathrm{E}|Z^{(j)>}_{1,n}|^{\delta} \leq \limsup_{n \to \infty} \frac{\mathrm{E}(\|Z_{1}\|^{\delta}1_{\{ \|Z_{1}\| > a_{n}\}})}{a_{n}^{\delta} \Pr(\|Z_{1}\| > a_{n})}\,(n+1) \Pr(\|Z_{1}\| > a_{n}) = \frac{\alpha}{\alpha - \delta}.$$
From this and relations (\ref{e:Diqn2}) and (\ref{e:Diqn3}) we conclude that
$$ \limsup_{n \to \infty} \Pr \bigg(\sum_{i=-\infty}^{n+1} \frac{A^{n,m}_{i}|Z_{i-m}^{(j)}|}{a_{n}} > \epsilon \bigg) \leq A \bigg( \sum_{l=m-1}^{\infty} \mathrm{E} \|C_{l}\|^{\gamma} + \sum_{l=m-1}^{\infty} \mathrm{E} \|C_{l}\|^{\delta} \bigg),$$
for some constant $A$.
Now letting $m \to \infty$, we see that conditions (\ref{e:momcondr}) and (\ref{e:mod1}) imply (\ref{eq:Oiqn1}).
\end{proof}

\begin{lem}\label{l:JTSA1}
Let $\alpha \in [1,\infty)$, and assume Condition $(\ref{eq:infmaTK3})$ holds. If $\alpha=1$ assume also Condition $(\ref{e:momcondr})$ holds. Then for every $\epsilon >0$,
$$ \lim_{m \to \infty} \limsup_{n \to \infty} \Pr \bigg[ \bigvee_{i=1}^{n} \sum_{j=1}^{d} \bigg| \sum_{k=m-1}^{\infty}\frac{A_{k,j}Z_{i-k}^{(j)}}{a_{n}} \bigg| > \epsilon \bigg] =0,$$
where
$$ A_{k,j} = \left\{ \begin{array}{cl}
                                   C_{k;1,j}-C_{1,j}^{(m,\vee)}, & \quad \textrm{if} \ k=m-1,\\[0.5em]
                                   C_{k;1,j}-C_{1,j}^{(m,\wedge)}, & \quad \textrm{if} \ k=m,\\[0.5em]
                                   C_{k;1,j}, & \quad \textrm{if} \ k \geq m+1.
                                 \end{array}\right.$$
\end{lem}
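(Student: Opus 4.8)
The plan is to reduce to a single coordinate, truncate the innovations at level $a_{n}$, and treat the resulting ``small'' and ``large'' parts separately, choosing the Markov exponent according to whether $\alpha>1$ or $\alpha=1$. First, since $\bigvee_{i=1}^{n}\sum_{j=1}^{d}x_{ij}\le\sum_{j=1}^{d}\bigvee_{i=1}^{n}x_{ij}$ for nonnegative $x_{ij}$, I would bound the probability in the statement by $\sum_{j=1}^{d}\Pr[\bigvee_{i=1}^{n}|\sum_{k\ge m-1}A_{k,j}Z_{i-k}^{(j)}/a_{n}|>\epsilon/d]$, so that it suffices to fix $j$ and prove that $\lim_{m}\limsup_{n}$ of each summand is $0$. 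Then I would split each innovation coordinate at the threshold $a_{n}$, setting $Z^{(j)\leq}_{l,n}=(Z_{l}^{(j)}/a_{n})1_{\{\|Z_{l}\|\le a_{n}\}}$ and $Z^{(j)>}_{l,n}=(Z_{l}^{(j)}/a_{n})1_{\{\|Z_{l}\|> a_{n}\}}$ exactly as in Lemma~\ref{l:ALEA1}, and use the triangle inequality and $\bigvee(a+b)\le\bigvee a+\bigvee b$ to dominate $\bigvee_{i}|\sum_{k}A_{k,j}Z_{i-k}^{(j)}/a_{n}|$ by the sum of the two nonnegative max-functionals built from $|Z^{(j)\leq}_{\cdot,n}|$ and $|Z^{(j)>}_{\cdot,n}|$. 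Throughout I would use the elementary bounds $|C_{m-1;1,j}-C_{1,j}^{(m,\vee)}|\le\sum_{l\ge m-1}|C_{l;1,j}|$ and $|C_{m;1,j}-C_{1,j}^{(m,\wedge)}|\le\sum_{l\ge m-1}|C_{l;1,j}|$ recorded after $(\ref{e:alphafc})$, together with $|A_{k,j}|=|C_{k;1,j}|\le\|C_{k}\|$ for $k\ge m+1$; these give $\sum_{k\ge m-1}|A_{k,j}|\le 3\sum_{l\ge m-1}\|C_{l}\|$ and, for any $\delta\le1$, $\sum_{k\ge m-1}\mathrm{E}|A_{k,j}|^{\delta}\le 3\sum_{l\ge m-1}\mathrm{E}\|C_{l}\|^{\delta}$ by subadditivity of $t\mapsto t^{\delta}$.

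For the small part I would note that $|Z^{(j)\leq}_{l,n}|\le1$ for every $l$ and $n$, so the summand is dominated deterministically (given the coefficients): $\sum_{k\ge m-1}|A_{k,j}|\,|Z^{(j)\leq}_{i-k,n}|\le\sum_{k\ge m-1}|A_{k,j}|\le 3\sum_{l\ge m-1}\|C_{l}\|=:3b_{m}$, uniformly in $i$ and $n$. Hence $\limsup_{n}\Pr[\text{small part}>\epsilon']\le\Pr(3b_{m}>\epsilon')$, and since $\sum_{l}\|C_{l}\|<\infty$ a.s.\ by $(\ref{eq:infmaTK3})$ we have $b_{m}\downarrow0$ a.s., so this tends to $0$ as $m\to\infty$. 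This step uses no moment beyond $(\ref{eq:infmaTK3})$ and no information about $\alpha$; the point of truncating at $a_{n}$ is precisely to discard the ``bulk'' whose summed contribution would otherwise diverge for $\alpha\ge1$.

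For the large part I would first pass from the maximum to the full sum, $\bigvee_{i=1}^{n}\sum_{k}|A_{k,j}|\,|Z^{(j)>}_{i-k,n}|\le\sum_{i=1}^{n}\sum_{k\ge m-1}|A_{k,j}|\,|Z^{(j)>}_{i-k,n}|$, and then apply Markov's inequality. When $\alpha>1$ I take exponent $\delta=1$: by independence of $(C_{i})$ and $(Z_{i})$ and stationarity the expectation equals $n\,(\sum_{k}\mathrm{E}|A_{k,j}|)\,\mathrm{E}|Z^{(j)>}_{1,n}|\le 3(\sum_{l\ge m-1}\mathrm{E}\|C_{l}\|)\,n\mathrm{E}|Z^{(j)>}_{1,n}|$, and the Karamata estimate $\limsup_{n}n\mathrm{E}|Z^{(j)>}_{1,n}|\le\alpha/(\alpha-1)$ (the same computation as in Lemma~\ref{l:ALEA1}) makes the bound vanish as $m\to\infty$ by $(\ref{eq:infmaTK3})$. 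When $\alpha=1$ I instead take $\delta\in(0,1)$ from $(\ref{e:momcondr})$ and use subadditivity: $\mathrm{E}[(\text{large part})^{\delta}]\le n\,(\sum_{k}\mathrm{E}|A_{k,j}|^{\delta})\,\mathrm{E}|Z^{(j)>}_{1,n}|^{\delta}\le 3(\sum_{l\ge m-1}\mathrm{E}\|C_{l}\|^{\delta})\,n\mathrm{E}|Z^{(j)>}_{1,n}|^{\delta}$, with $\limsup_{n}n\mathrm{E}|Z^{(j)>}_{1,n}|^{\delta}\le\alpha/(\alpha-\delta)=1/(1-\delta)$, which again tends to $0$ as $m\to\infty$ by $(\ref{e:momcondr})$. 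Combining the two parts with the reduction over $j$ yields the claim.

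The main obstacle is exactly the boundary case $\alpha=1$: the first-moment bound fails because the normalized truncated mean $n\mathrm{E}|Z^{(j)>}_{1,n}|$ diverges, while a super-unit exponent is unavailable since subadditivity of $t\mapsto t^{\delta}$ is lost for $\delta>1$. This forces the use of the auxiliary condition $(\ref{e:momcondr})$ with $\delta<1$, and simultaneously one must control the modified leading coefficients $A_{m-1,j}$ and $A_{m,j}$---which are suprema and infima over infinitely many random matrices---through the tail sums $\sum_{l\ge m-1}\mathrm{E}\|C_{l}\|^{\delta}$, whose vanishing as $m\to\infty$ is what ultimately drives both parts of the estimate to zero.
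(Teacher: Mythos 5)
Your proof is correct, and for the ``large'' (untruncated) part it coincides with the paper's argument step for step: pass from the maximum to the sum, apply Markov's inequality with exponent $1$ when $\alpha>1$ and with the exponent $\delta\in(0,1)$ from (\ref{e:momcondr}) when $\alpha=1$, use independence of $(C_i)$ and $(Z_i)$ together with stationarity, the coefficient bound $\sum_{k\ge m-1}\mathrm{E}|A_{k,j}|^{\delta}\le 3\sum_{l\ge m-1}\mathrm{E}\|C_{l}\|^{\delta}$, and the Karamata estimates $\limsup_n n\mathrm{E}|Z^{(j)>}_{1,n}|\le\alpha/(\alpha-1)$, $\limsup_n n\mathrm{E}|Z^{(j)>}_{1,n}|^{\delta}\le\alpha/(\alpha-\delta)$, so that (\ref{eq:infmaTK3}) and (\ref{e:momcondr}) drive the bound to zero as $m\to\infty$. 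Where you genuinely depart from the paper is the truncated (``small'') part. The paper controls it by importing the maximal-inequality argument of Theorem 3.4 in Krizmani\'{c}~\cite{Kr22-2}, which yields a bound of the form
$$\Big(1+\tfrac{(2d)^{\varphi}}{\epsilon^{\varphi}}\,n\,\mathrm{E}|Z^{(j)\leq}_{1,n}|^{\varphi}\Big)\sum_{k=m-1}^{\infty}\mathrm{E}|A_{k,j}|$$
for some $\varphi>\alpha$, and then needs Karamata once more to control $n\,\mathrm{E}|Z^{(j)\leq}_{1,n}|^{\varphi}$. You instead exploit the pointwise bound $|Z^{(j)\leq}_{l,n}|\le 1$ (valid in the max-norm), which dominates the small part, uniformly in $i$ and $n$, by $\sum_{k\ge m-1}|A_{k,j}|\le 3\sum_{l\ge m-1}\|C_{l}\|$; hence
$$\limsup_{n\to\infty}\Pr\bigg(\bigvee_{i=1}^{n}\bigg|\sum_{k=m-1}^{\infty}A_{k,j}Z^{(j)\leq}_{i-k,n}\bigg|>\frac{\epsilon}{2d}\bigg)\le\Pr\bigg(3\sum_{l\ge m-1}\|C_{l}\|>\frac{\epsilon}{2d}\bigg)\le\frac{6d}{\epsilon}\sum_{l\ge m-1}\mathrm{E}\|C_{l}\|,$$
which vanishes as $m\to\infty$ by (\ref{eq:infmaTK3}) alone (equivalently, the tail sums $\sum_{l\ge m-1}\|C_l\|$ tend to $0$ a.s.). This is a genuine simplification: it needs no maximal inequality, no $\varphi$-moment with $\varphi>\alpha$, and no regular variation for that part of the estimate. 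The paper's route buys uniformity of method with the univariate proof it is adapted from, but for the lemma as stated your elementary bound is sufficient and cleaner; the two proofs then agree on where the real difficulty lies, namely the boundary case $\alpha=1$, which both resolve with the auxiliary condition (\ref{e:momcondr}).
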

\begin{proof}
Define $Z_{i,n}^{(j)\leq} = a_{n}^{-1} Z_{i}^{(j)} 1_{\{ \|Z_{i}\| \leq a_{n} \}}$ and $Z_{i,n}^{(j)>} = a_{n}^{-1} Z_{i}^{(j)} 1_{\{ \|Z_{i}\| > a_{n} \}}$  for $i \in \mathbb{Z}$, $n \in \mathbb{N}$ and $j \in \{1,\ldots,d\}$,
and note that
$$  \sum_{k=m-1}^{\infty}\frac{A_{k,j}Z_{i-k}^{(j)}}{a_{n}}  =  \sum_{k=m-1}^{\infty} A_{k,j}Z_{i-k,n}^{(j)\leq} + \sum_{k=m-1}^{\infty} A_{k,j}Z_{i-k,n}^{(j)>}.$$
Hence
\begin{eqnarray}
\nonumber   \Pr \bigg( \bigvee_{i=1}^{n} \sum_{j=1}^{d} \bigg| \sum_{k=m-1}^{\infty}\frac{A_{k,j}Z_{i-k}^{(j)}}{a_{n}} \bigg| > \epsilon \bigg) &\leq & \sum_{j=1}^{d} \Pr \bigg( \bigvee_{i=1}^{n} \bigg| \sum_{k=m-1}^{\infty}\frac{A_{k,j}Z_{i-k}^{(j)}}{a_{n}} \bigg| > \frac{\epsilon}{d} \bigg)\\[0.4em]
 \nonumber & \hspace*{-28em} \leq & \hspace*{-14em}  \sum_{j=1}^{d} \Pr \bigg( \bigvee_{i=1}^{n}  \bigg| \sum_{k=m-1}^{\infty} A_{k,j}Z_{i-k,n}^{(j)\leq} \bigg| > \frac{\epsilon}{2d} \bigg) +  \sum_{j=1}^{d} \Pr \bigg( \bigvee_{i=1}^{n} \bigg| \sum_{k=m-1}^{\infty} A_{k,j}Z_{i-k,n}^{(j)>} \bigg| > \frac{\epsilon}{2d} \bigg).
\end{eqnarray}
Now using the same arguments from the proof of Theorem 3.4 in Krizmani\'{c}~\cite{Kr22-2} we can bound the probability in first sum on the right-hand side of the last relation by
\begin{eqnarray*}
 \Pr \bigg( \bigvee_{i=1}^{n}  \bigg| \sum_{k=m-1}^{\infty} A_{k,j}Z_{i-k,n}^{(j)\leq} \bigg| > \frac{\epsilon}{2d} \bigg) &\leq&
 \Big(1+  \frac{(2d)^{\varphi}}{\epsilon^{\varphi}}\,n \mathrm{E}|Z_{1,n}^{(j)\leq}|^{\varphi} \Big) \sum_{k=m-1}^{\infty} \mathrm{E}|A_{k,j}|\\[0.2em]
& \leq & 3 \Big(1+  \frac{(2d)^{\varphi}}{\epsilon^{\varphi}}\,n \mathrm{E}|Z_{1,n}^{(j)\leq}|^{\varphi} \Big) \sum_{k=m-1}^{\infty} \mathrm{E}\|C_{k}\|
\end{eqnarray*}
for some $\varphi > \alpha$,
and the probability in the second sum by
$$ \Pr \bigg( \bigvee_{i=1}^{n} \bigg| \sum_{k=m-1}^{\infty} A_{k,j}Z_{i-k,n}^{(j)>} \bigg| > \frac{\epsilon}{2d} \bigg) \leq
 \left\{ \begin{array}{cl}
         \displaystyle \frac{6d}{\epsilon}\,n \mathrm{E} |Z_{1,n}^{(j)>}| \sum_{k=m-1}^{\infty} \mathrm{E} \|C_{k}\|, & \ \textrm{if} \ \alpha >1,\\[1.4em]
          \displaystyle 3 \frac{2^{\delta}}{\epsilon^{\delta} }\,n \mathrm{E}|Z_{1,n}^{(j)>}|^{\delta} \sum_{k=m-1}^{\infty} \mathrm{E} \|C_{k}\|^{\delta}, & \ \textrm{if} \ \alpha=1.
 \end{array}\right.$$
By Karamata's theorem and (\ref{eq:niz}) we obtain
$$ \limsup_{n \to \infty}\,n \mathrm{E}|Z^{(j)\leq}_{1,n}|^{\varphi} \leq \limsup_{n \to \infty} \frac{\mathrm{E}(\|Z_{1}\|^{\varphi}1_{\{ \|Z_{1}\| \leq a_{n}\}})}{a_{n}^{\varphi} \Pr(\|Z_{1}\| > a_{n})}\,n \Pr(\|Z_{1}\| > a_{n}) = \frac{\alpha}{\varphi - \alpha},$$
and for $\alpha >1$
$$ \limsup_{n \to \infty}\,n \mathrm{E}|Z^{(j)>}_{1,n}| \leq \limsup_{n \to \infty} \frac{\mathrm{E}(\|Z_{1}\|1_{\{ \|Z_{1}\| > a_{n}\}})}{a_{n} \Pr(\|Z_{1}\| > a_{n})}\,n \Pr(\|Z_{1}\| > a_{n}) = \frac{\alpha}{\alpha - 1}.$$
Similarly for $\alpha=1$ we have
$$ \limsup_{n \to \infty}\,n\mathrm{E}|Z^{(j)>}_{1,n}|^{\delta} \leq \frac{\alpha}{\alpha - \delta}.$$
Therefore we conclude
\begin{eqnarray*}
\limsup_{n \to \infty} \Pr \bigg( \bigvee_{i=1}^{n} \sum_{j=1}^{d} \bigg| \sum_{k=m-1}^{\infty}\frac{A_{k,j}Z_{i-k}^{(j)}}{a_{n}} \bigg| > \epsilon \bigg)&&\\[0.4em]
 & \hspace*{-28em} \leq& \hspace*{-14em}
 \left\{ \begin{array}{cl}
   \displaystyle A \sum_{k=m-1}^{\infty} \mathrm{E} \|C_{k}\|, & \quad \textrm{if} \ \alpha >1,\\[1.2em]
   \displaystyle A \bigg( \sum_{k=m-1}^{\infty} \mathrm{E} \|C_{k}\|+ \sum_{k=m-1}^{\infty} \mathrm{E}\|C_{k}\|^{\delta} \bigg), & \quad \textrm{if} \ \alpha=1,
\end{array}\right.
\end{eqnarray*}
for some constant $A$.
Letting $m \to \infty$ we see that conditions (\ref{e:momcondr}) and (\ref{eq:infmaTK3}) yield
$$ \lim_{m \to \infty} \limsup_{n \to \infty} \Pr \bigg[ \bigvee_{i=1}^{n} \sum_{j=1}^{d} \bigg| \sum_{k=m-1}^{\infty}\frac{A_{k,j}Z_{i-k}^{(j)}}{a_{n}} \bigg| > \epsilon \bigg] =0.$$
\end{proof}

\section*{Acknowledgements}
 This work has been supported in part by University of Rijeka research grants uniri-prirod-18-9, uniri-pr-prirod-19-16 and uniri-iskusni-prirod-23-98, and by Croatian Science Foundation under the project IP-2019-04-1239.


\end{document}